\numberwithin{equation}{section}
\newcounter{count}
\newcommand{\num}{\stepcounter{count}\the\value{count}}
\newcommand{\N}[0]{\mathbb{N}}
\newcommand{\Z}[0]{\mathbb{Z}}
\newcommand{\R}[0]{\mathbb{R}}
\newtheorem{theorem}{Theorem}[section]
\newtheorem{lemma}[theorem]{Lemma}
\newtheorem{corollary}[theorem]{Corollary}
\newtheorem{proposition}[theorem]{Proposition}
\theoremstyle{definition}
\newtheorem{remark}[theorem]{Remark}
\newtheorem{notation}[theorem]{Notation}
\newtheorem*{acknowledgment}{Acknowledgment}
\def\@seccntformat#1{\csname the#1\endcsname.\quad}
\title[Simple normality and the Riemann zeta function]{The simple normality of the fractional powers\\ of two and the Riemann zeta function}
\keywords{simple normality, Riemann zeta function, Perron's formula, functional equation, exponential integral, Ridout's theorem}
\subjclass[2020]{11K16, 11M06}
\author[Y. Kanado]{Yuya Kanado}
\address{Yuya Kanado\\
	Graduate School of Mathematics\\ Nagoya University\\ Furo-cho\\ Chikusa-ku\\ Nagoya\\ 464-8602\\ Japan}
\email{m21017a@math.nagoya-u.ac.jp}
\author[K. Saito]{Kota Saito}
\address{Kota Saito\\Faculty of Pure and Applied Sciences\\ University of Tsukuba\\ 1-1-1 Tennodai\\ Tsukuba\\ Ibaraki\\ 305-8577\\ Japan}
\email{saito.kota.gn@u.tsukuba.ac.jp}
\begin{document}

\maketitle

\begin{abstract}
A real number is simply normal to base $b$ if its base-$b$ expansion has each digit appearing with average frequency tending to $1/b$. In this article, we discover a relation between the frequency at which the digit $1$ appears in the binary expansion of $2^{p/q}$ and a mean value of the Riemann zeta function on arithmetic progressions. Consequently, we show that 
\[
\lim_{l\to \infty} \frac{1}{l}\sum_{0<|n|\leq 2^l  } \zeta\left(\frac{2 n\pi i}{\log 2}\right) \frac{e^{2n\pi i p/q} }{n} =0
\]
if and only if $2^{p/q}$ is simply normal to base $2$. 
\end{abstract}

\section{Introduction}
Let $\lfloor x\rfloor$ denote the integer part of $x\in \mathbb{R}$. Fix any integer $b\geq 2$. For all $x\in \mathbb{R}$, $a\in \{0,1,\ldots ,b-1\}$, and real numbers $l>0$, we define 
\[
A_b(l;a,x)= \# \{d\in \Z \colon 0\leq d\leq l, \   \lfloor b^d x\rfloor \in a+b\mathbb{Z} \}.  
\]
If $x= \sum_{d=-m}^\infty c_d b^{-d}$ is the $b$-adic expansion of a given real number $x$, then $A_b(l;a,x)$ is equal to the number of $d\in [0,l]$ such that $c_d=a$. We say that $x$ is \textit{simply normal} to base $b$ if for each $a\in \{0,1,\ldots, b-1\}$, we have
\[
\lim_{l\to \infty} A_b (l;a,x)/l = 1/b. 
\]

Borel showed that almost all real numbers are simply normal\footnote{Precisely, he showed that almost all real numbers are normal to base $b$ for every integer $b\geq 2$. Thus, he obtained a much stronger result than the one we exhibit.} to base $b$ for all $b\geq2$ in 1909 \cite{Borel}; however, the simple normality for many non-artificial numbers such as $\pi, e, \log2$, and $\sqrt{2}$ is unknown. In this article, we do not determine whether $2^{p/q}$ is simply normal, but we discover a relation between $A_2(l;1,2^{p/q})$ and a mean value of the Riemann zeta function on vertical arithmetic progressions. Let $\zeta(s)$ denote the Riemann zeta function.  

\begin{theorem}\label{theorem:main1}
Let $p$ and $q$ be relatively prime integers with $1\leq p<q$. Then we have 
\[
A_2(l;1, 2^{p/q}) = \frac{l}{2} - \frac{1}{2\pi i } \sum_{0<|n|\leq 2^l } \zeta\left(\frac{2 n\pi i}{\log 2}\right) \frac{e^{2n\pi i  p/q} }{n} +o(l)\quad (\text{as $l\to\infty$}),
\]
where $l$ runs over positive real numbers. Especially, we have
\begin{equation}\label{equation:important-lim}
    \lim_{l\to \infty} \frac{1}{l}\sum_{0<|n|\leq 2^l  } \zeta\left(\frac{2 n\pi i}{\log 2}\right) \frac{e^{2n\pi i p/q} }{n} =0
\end{equation}
if and only if $2^{p/q}$ is simply normal to base $2$. 
\end{theorem}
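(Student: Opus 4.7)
The plan is to reduce the digit count to a sum of sawtooth values $\psi(2^d\cdot 2^{p/q})$ via a telescoping identity, then represent each term through the Mellin-Perron integral of $\zeta$, and finally extract the oscillatory sum by a contour shift that exploits the functional equation.

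First I would make the reduction: from $c_d(x)=\lfloor 2^d x\rfloor-2\lfloor 2^{d-1}x\rfloor$ for $d\geq 1$ (with $c_0(x)=\lfloor x\rfloor=1$), telescoping gives
\[
A_2(l;1,x) \;=\; \lfloor 2^D x\rfloor - \sum_{d=0}^{D-1}\lfloor 2^d x\rfloor \;=\; x - \{2^D x\} + \sum_{d=0}^{D-1}\{2^d x\},\qquad D=\lfloor l\rfloor.
\]
Writing $\{y\}=\tfrac12+\psi(y)$ with $\psi(y)=\{y\}-\tfrac12$ separates out the main term and reduces the theorem to showing
\[
\sum_{d=0}^{D-1}\psi(2^d x) \;=\; -\frac{1}{2\pi i}\sum_{0<|n|\leq 2^l}\zeta(s_n)\,\frac{e^{2n\pi i p/q}}{n} \;+\; o(l),\qquad s_n=\frac{2n\pi i}{\log 2}.
\]
I would then apply the Mellin-Perron identity $\psi(y)=-\frac{1}{2\pi i}\int_{(c)}\zeta(s)y^s/s\,ds$ (Cauchy principal value, $c\in(-1,0)$, $y>0$ non-integer) to each term and sum the geometric series in $d$, obtaining
\[
\sum_{d=0}^{D-1}\psi(2^d x) \;=\; -\frac{1}{2\pi i}\int_{(c)}\zeta(s)\,x^s\,\frac{2^{Ds}-1}{(2^s-1)\,s}\,ds,
\]
truncated at $|\mathrm{Im}\,s|\leq T$ with $T\asymp 2^l$.

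The integrand is regular at each $s_n$ when $D$ is an integer (the zero of $2^{Ds}-1$ cancels the pole of $1/(2^s-1)$), so the values $\zeta(s_n)$ do not arise as direct residues. To recover them I would shift the contour deep into the left half-plane, invoke the functional equation $\zeta(s)=2^s\pi^{s-1}\sin(\pi s/2)\Gamma(1-s)\zeta(1-s)$, expand $\zeta(1-s)=\sum_m m^{s-1}$ (convergent there), and swap sum with integral. Each resulting piece is a contour integral of exponential-integral type; evaluating them by closing the contour and collecting residues at the $\Gamma$-poles together with the contributions at $s_n$ (which in the reorganised integrand are no longer removable) reproduces the target sum. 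The factor $e^{2n\pi i p/q}=x^{s_n}$ appears because $x=2^{p/q}$, and the cutoff $|n|\leq 2^l$ matches the Perron truncation height $T\asymp 2^l$.

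The main obstacle will be the simultaneous control of three error sources to $o(l)$. The Perron-truncation error at height $T$ demands a lower bound on the distance from $2^Dx=2^{D+p/q}$ to the nearest integer; Ridout's theorem on rational approximations to $2^{p/q}$ supplies the necessary polynomial gap. The contour-shift error across the imaginary axis requires convexity-type bounds for $\zeta$ on vertical lines, while the exponential-integral estimates govern the behaviour at large heights. Assembling these three inputs so that all error terms are truly $o(l)$, rather than merely $O(\log T)$ as would follow from the textbook Perron estimate, is the technical heart of the argument.
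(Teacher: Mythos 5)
Your reduction to $\sum_{d=0}^{D-1}\psi(2^d\cdot 2^{p/q})$ is correct and matches the paper's Lemma~\ref{lemma:binary-expansion}, and you have correctly anticipated every tool the paper uses (functional equation, first and second derivative tests and the stationary phase lemma, Ridout's theorem). But the mechanism you propose for producing the sum $\sum_{0<|n|\leq 2^l}\zeta(s_n)e^{2n\pi i p/q}/n$, $s_n=2n\pi i/\log 2$, does not work, and that sum is the entire content of the theorem. After summing the geometric series your integrand is $\zeta(s)\,2^{(p/q)s}\,\frac{2^{Ds}-1}{(2^s-1)s}$, and as you observe its singularities at $s_n$ are removable because $D\in\Z$. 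That is fatal: a removable singularity has residue zero, and no contour shift, application of $\zeta(s)=\chi(s)\zeta(1-s)$, or re-expansion changes the analytic nature of the single function you are integrating. Concretely, for $\Re s<0$ one has $\frac{2^{Ds}-1}{2^s-1}=\sum_{j\geq0}2^{js}-\sum_{j\geq0}2^{(D+j)s}$, and every summand $2^{js}$ is entire, so in the reorganised expression there are no poles at $s_n$ at all; the claim that they ``are no longer removable'' is not substantiated and is in fact false. The only way to produce residues at $s_n$ would be to split the factor into $\frac{-1}{2^s-1}+\frac{2^{Ds}}{2^s-1}$ and move the two pieces to different contours, but the truncated integrals would then have to be estimated separately and your proposal says nothing about how.

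The paper sidesteps this entirely by never applying Mellin inversion to $\psi$ directly. It manufactures the arithmetic function $h=f\ast g$ whose Dirichlet series is $\varphi(k,qs)\eta(qs)$, where $\varphi(k,qs)=\sum_{d\geq0}k^{-qds}=(1-k^{-qs})^{-1}$ is an \emph{infinite} geometric series. Its denominator has genuine simple poles at $s_n=2n\pi i/(q\log k)$ with nonvanishing numerator $\eta(qs_n)=(1-k)\zeta(2n\pi i/\log k)$, so when Perron's formula is applied on $\Re s=c>1/q$ and the line is shifted to $\Re s=\sigma_0<0$, the $\zeta$-sum appears precisely as the collected residues. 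In other words, the truncation of the $d$-sum is performed by the Perron cutoff $n\leq x$ rather than by the finiteness of a geometric factor, and that is exactly what preserves the poles your approach destroys. This auxiliary Dirichlet-series construction is the missing idea, and without it the theorem's right-hand side simply does not appear in your calculation.
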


It is unknown whether $A_2(l;1, 2^{p/q})/l$ converges as $l$ tends to infinity. Theorem~\ref{theorem:main1} also reveals that the limit on the left-hand side of \eqref{equation:important-lim} exists if and only if $A_2(l;1, 2^{p/q})/l$ converges. Moreover, if we have
\begin{equation}\label{inequality:zeta-beta}
    \limsup_{l\to\infty} \frac{1}{l} \left\lvert \frac{1}{2\pi i} \sum_{0<|n|\leq 2^l  } \zeta\left(\frac{2 n\pi i}{\log 2}\right) \frac{e^{2n\pi i p/q} }{n} \right\rvert<\beta
\end{equation}
for some real number $\beta\in (0,1/2]$, then $ 1/2-\beta <  A_2 (l;1,2^{p/q})/l < 1/2+\beta$ holds for sufficiently large $l>0$. 

\begin{remark}
Since $\zeta(\overline{s})=\overline{\zeta(s)}$, we obtain 
\begin{equation}\label{equation:realsum}
    \frac{1}{2\pi i}\sum_{0<|n|\leq 2^l  } \zeta\left(\frac{2 n\pi i}{\log 2}\right) \frac{e^{2n\pi i p/q} }{n} =\frac{1}{\pi} \sum_{0<n\leq 2^l  } \Im\left( \zeta\left(\frac{2 n\pi i}{\log 2}\right) \frac{e^{2n\pi i p/q} }{n}\right),
\end{equation}
where $\Im (z)$ denotes the imaginary part of $z$ for all $z\in \mathbb{C}$; thus, the left-hand side of \eqref{equation:realsum} is always a real number. 
\end{remark}

It is natural to investigate a mean value of the Riemann zeta function on arithmetic progressions to verify \eqref{equation:important-lim} or \eqref{inequality:zeta-beta}. When $0<\Re(s_0)<1$, there is research on asymptotic formulas of $\sum_{0\leq n<M} \zeta(s_0 + idn)$. For example, Steuding and Wegert firstly studied the asymptotic formulas for all $d=2\pi/\log k$ with $k\in \mathbb{Z}_{\ge 2} $ \cite[Theorem~1.1]{Steuding-Wegert}. Furthermore, in \cite{Ozbek-Steuding_2017,Ozbek-Steuding_2019}, \"{O}zbek and Steuding showed  that for all $s_0\in \mathbb{C}$ with $\Re(s_0)\in (0,1)$ 
\begin{equation}\label{equation-OS}
\lim_{M\to\infty} \frac{1}{M}\sum_{0\leq n<M}\zeta\left(s_0+ind\right)=
\begin{cases}
    (1-k^{-s_0})^{-1} & \text{ if }d=\frac{2\pi r}{\log k},\ r\in \mathbb{N},\ k\in \mathbb{Z}_{\ge 2}, \\
    1 & \text{ otherwise},
\end{cases}
\end{equation}
Here the form $d= 2\pi r/ (\log k)$ with $k\in \mathbb{Z}_{\ge 2}$ and $r\in \mathbb{N}$, then $r$ is supposed to be the smallest integer for which such a value $k$ exists. They also gave similar asymptotic formulas on more general arithmetic progressions \cite{Ozbek-Steuding_2019}. We get the following: none obtained asymptotic formulas on $\Re(s_0)=0$. 

\begin{theorem}\label{theorem:main2}
Let $k$ be an integer not less than $2$. For every real number $l\geq 2$, we have 
\begin{equation}\label{equation:main2}
    \frac{1}{2\pi i }\sum_{0<|n|\leq k^l} \zeta \left(\frac{2n\pi i}{\log k}\right) \frac{1}{n}=O_k(1). 
\end{equation}
\end{theorem}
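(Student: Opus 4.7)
The plan is to reinterpret the sum as a sum of residues and bound the resulting contour integral. Define
\[
F(s) := \frac{\zeta(s)\log k}{s(k^s-1)}.
\]
Since $k^{s_n}=1$ at $s_n := 2n\pi i/\log k$, the factor $1/(k^s-1)$ produces a simple pole of $F$ with $\operatorname*{Res}_{s=s_n}F = \zeta(s_n)/s_n$; in addition $F$ has a simple pole at $s=1$ (residue $R_1 = \log k/(k-1)$) and a double pole at $s=0$ (residue $R_0 = \zeta'(0)+\tfrac14\log k$), both $O_k(1)$. Because $\zeta(s_n)/n = (2\pi i/\log k)\zeta(s_n)/s_n$, applying the residue theorem to $F$ on the rectangular contour $\Gamma$ with vertices $2\pm iT_N$ and $-\tfrac14\pm iT_N$, where $N := \lfloor k^l\rfloor$ and $T_N := (2N+1)\pi/\log k$, reduces Theorem \ref{theorem:main2} to the claim that $\oint_\Gamma F(s)\,ds = O_k(1)$ uniformly in $N$. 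The choice of $T_N$ ensures that $\Gamma$ avoids every $s_n$ and that $k^{iT_N} = -1$.

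On the right vertical $\Re s = 2$, I would use the absolutely convergent Dirichlet series $\zeta(s)/(k^s-1) = \sum_{m\geq k} w_k(m)\,m^{-s}$, where $w_k(m) := \max\{j\geq 1: k^j\mid m\}$, together with truncated Perron's formula at $x=1$: since no summand has $m\leq 1$, the main term vanishes and the error gives $\int_{2-iT_N}^{2+iT_N}F(s)\,ds = O_k(1/T_N)$. On the horizontal sides $\Im s = \pm T_N$, the identity $|k^{\sigma+iT_N}-1| = k^\sigma+1\geq 1$ keeps that factor bounded below, so together with $|s|\geq T_N$ and the convexity bound $|\zeta(\sigma+iT)|\ll T^{\mu(\sigma)}\log^{O(1)}T$ (where $\mu(-\tfrac14) = \tfrac34$, $\mu(\sigma)\leq (1-\sigma)/2$ on $[0,1]$, and $\mu\equiv 0$ on $[1,\infty)$), one gets $|F(\sigma+iT_N)|\ll T_N^{\mu(\sigma)-1}\log^{O(1)}T_N$, which is dominated at $\sigma=-\tfrac14$; integrating in $\sigma$ yields $O_k(T_N^{-1/4}\log^{O(1)}T_N)=o_k(1)$.

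The hard part is the left vertical $\Re s = -\tfrac14$. Using the functional equation $\zeta(s) = \chi(s)\zeta(1-s)$ and the geometric expansion $1/(k^s-1) = -\sum_{j\geq 0} k^{-j/4}e^{ijt\log k}$ (valid since $|k^s|=k^{-1/4}<1$), the integrand breaks into terms involving $\chi(s)\zeta(\tfrac54-it)\,e^{ijt\log k}/s$, with $\zeta(\tfrac54-it) = O(1)$. By Stirling, $\chi(-\tfrac14+it) = (|t|/2\pi)^{3/4}e^{i\varphi(t)}(1+O(1/|t|))$ with Riemann--Siegel phase $\varphi'(t) = -\log(|t|/2\pi)+O(1/|t|)$, so each term is an oscillatory integral of the form $g(t)e^{i\Phi_j(t)}$ with $|g(t)|=O(|t|^{-1/4})$ and $|\Phi_j'(t)|\gtrsim \log|t|+j\log k$. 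Integration by parts against the phase---effectively reducing the integral to the exponential integral $\operatorname{Ei}$ at the endpoints---bounds the $j$-th contribution by $O_k(k^{-j/4}(\log T_N)^{-1})$ for $|t|\gg 1$, plus a trivial $O(1)$ from $|t|\leq 1$. Summing in $j$ yields $\int_{-1/4-iT_N}^{-1/4+iT_N} F(s)\,ds = O_k(1)$, and combining all four sides gives $\oint_\Gamma F(s)\,ds = O_k(1)$.

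The main obstacle is precisely this left-vertical estimate: the naive pointwise bound $|F(-\tfrac14+it)|\ll |t|^{-1/4}$ integrates to the divergent $O(T_N^{3/4})$, and one must extract cancellation from the rapid oscillation of $\chi(s)$ via its Riemann--Siegel phase---i.e.\ recognize the integral as an exponential integral of type $\operatorname{Ei}$, as suggested by the paper's keyword list---to bring the bound down to $O_k(1)$.
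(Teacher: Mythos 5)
Your overall contour-integral framing of the problem is a genuine and arguably cleaner alternative to the paper's approach. The paper introduces an auxiliary arithmetic function $h(n)$ with Dirichlet series $\varphi(k,qs)\eta(qs)$, applies truncated Perron's formula (which brings in a factor $x^s$), and only specializes to $p=q=1$ at the very end. You bypass all of that by integrating $F(s)=\zeta(s)\log k/\bigl(s(k^s-1)\bigr)$ directly over a rectangle whose height encodes the truncation; the residues at $s_n=2n\pi i/\log k$ immediately produce $\zeta(s_n)/s_n$, with no $x^s$ factor. The right vertical and the two horizontals are treated correctly. However, the left-vertical estimate --- which you correctly identify as the crux --- contains a genuine error that your proposed repair does not fix.

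The claim $|\Phi_j'(t)|\gtrsim \log|t|+j\log k$ is false. After applying the functional equation and the geometric expansion $1/(k^s-1)=-\sum_{j\geq 0}k^{js}$ on $\Re s=-\tfrac14$, the phase of the $j$-th term is $\Phi_j(t)=\varphi(t)+jt\log k$ with $\varphi'(t)=-\log(|t|/2\pi)+O(1/|t|)$, so
\[
\Phi_j'(t)=-\log\!\bigl(|t|/2\pi\bigr)+j\log k+O(1/|t|),
\]
which \emph{vanishes} at $|t|\approx 2\pi k^{j}$. Every $j$ with $k^{j}\lesssim T_N$ --- that is, roughly $\log_k T_N\asymp l$ values of $j$ --- contributes a stationary point lying inside $[1,T_N]$. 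Integration by parts against the phase therefore fails there, and the resulting bound blows up, not to $O_k(1)$. (Treating $\zeta(\tfrac54-it)$ as a mere $O(1)$ amplitude also hides a further family of stationary points: expanding $\zeta(\tfrac54-it)=\sum_n n^{-5/4}e^{it\log n}$, the $(j,n)$-term has its stationary point at $t=2\pi k^{j}n$.)

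What actually saves the estimate is the stationary phase method (the paper's Lemma~\ref{lemma:stationary-phase}), together with an arithmetic coincidence you have not used: at the stationary point $t_0=2\pi k^{j}n$, the phase takes the value
\[
\Phi_{j,n}(t_0)=t_0\log\!\Bigl(\frac{2\pi k^{j}ne}{t_0}\Bigr)-\frac{\pi}{4}=2\pi k^{j}n-\frac{\pi}{4}\equiv -\frac{\pi}{4}\pmod{2\pi},
\]
because $k^{j}n\in\mathbb{Z}$. Feeding this into the stationary-phase asymptotic $(2\pi)^{1/2}e^{-\pi i/4+i\Phi(t_0)}/|\Phi''(t_0)|^{1/2}$ and multiplying by the amplitude $\asymp k^{-j/4}n^{-5/4}t_0^{-1/4}$, one finds that the $(j,n)$-contribution to $\int_1^{T_N}F(-\tfrac14+it)\,dt$ is $-i\,\log k/n+\text{(small errors)}$, which is purely imaginary. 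Since the left-vertical integral $\int_{-1/4-iT_N}^{-1/4+iT_N}F\,ds$ equals $-2i\,\Re\!\int_0^{T_N}F(-\tfrac14+it)\,dt$ by Schwarz reflection, the stationary-phase \emph{main terms drop out entirely}, and only the error terms (which do sum to $O_k(1)$ after a careful treatment of endpoints as in the paper's Lemmas~\ref{lemma:S01}--\ref{lemma:S03}) survive. This cancellation is the real-variable translation of the paper's observation that the main term is $\sum_{m}a_m\sum_n\sin(2\alpha_{m,n})/(n\pi)$ with $\alpha_{m,n}=k^{l+m+1}n\pi\in\pi\mathbb{Z}$, hence identically zero when $p=q=1$. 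Without identifying and then cancelling the stationary points, the argument cannot close --- a non-stationary-phase bound of $\mathrm{Ei}$-type, which is what your sketch invokes, is simply not available here.
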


The summations in \eqref{equation:important-lim} and \eqref{equation:main2} are slightly different from \eqref{equation-OS}, and hence we have to pay attention when comparing them. In Remark~\ref{remark:comparing}, essentially by \eqref{equation-OS}, for all $p,q\in \mathbb{Z}$, $k\in \mathbb{Z}_{\ge 2}$, and $\sigma_0\in (0,1)$, we will see that   
\begin{equation}\label{equation-OS-modifying}
        \lim_{l\to \infty} \frac{1}{l}\sum_{0<|n|\leq k^l } \zeta\left(\sigma_0+\frac{2n\pi i}{\log k}\right) \frac{e^{2n\pi i  p/q}}{n}=0.
\end{equation}
 Therefore, from Theorem~\ref{theorem:main1}, transferring \eqref{equation-OS-modifying} with $k=2$ to the case $\sigma_0=0$ is equivalent to verifying the simple normality of $2^{p/q}$. Moreover, we can consider Theorem~\ref{theorem:main2} a successful transfer \eqref{equation-OS-modifying} with $p=q=1$ to $\sigma_0=0$.

We also find research on the high moments of the Riemann zeta function. Good showed asymptotic formulas for the fourth moment on vertical arithmetic progressions belonging to the right half of the critical strip \cite{Good}. Kobayashi presented the ones for the second moments of $\zeta(1/2+in)$ \cite{Kobayashi}. We do not study relations between problems on digits and the high moments of the Riemann zeta function. In the future, it would be interesting if we discovered their connections. Further, we only focus on the Riemann zeta function in the article. It would be attractive if we disclosed connections between problems on digits and other zeta functions such as the $L$-function, Hurwitz zeta function, Dedekind zeta function, multiple zeta function, etc.  

\begin{notation}
Let $\mathbb{N}=\{1,2,3,\ldots\}$. For every $m\in \mathbb{Z}$, we define $\mathbb{Z}_{\ge m}$ as the set of integers not less than $m$. For $x\in \mathbb{R}$, let $\{x\}$ denote the fractional part of $x$, and $\|x\|$ denote the distance from $x$ to the nearest integer. Let $\log_k x$ be $\log x/\log k$ for every $x>0$ and integer $k\geq2$. We say that $f(x)=g(x)+o(h(x))$ (as $x\to \infty$) if for all $\epsilon>0$ there exists $x_0>0$ such that $|f(x)-g(x)|\leq h(x)\epsilon $ for all $x\geq x_0$. If $x_0$ depends on some parameters $\epsilon, a_1,\ldots ,a_n$, then we write $f(x)=g(x)+o_{a_1,\ldots , a_n}(h(x))$. We also say that $f(x)=g(x)+O(h(x))$ for all $x\geq x_0$ if there exists $C>0$ such that $|f(x)-g(x)|\leq Ch(x)$  for all $x\geq x_0$. If $C$ depends on some parameters $a_1,\ldots , a_n$, then we write  $f(x)=g(x)+O_{a_1,\ldots ,a_n}(h(x))$ for all $x\geq x_0$.

\end{notation}
\section{A Preliminary discussion}
In this section, we will observe that the following theorem implies Theorem~\ref{theorem:main1}. In addition, we will introduce a certain arithmetic function which plays a key role in the proof. 

\begin{theorem}\label{theorem:generalization}
Let $p$ and $q$ be relatively prime positive integers with $1\leq p< q$. Let $k\geq 2$ be an integer which is not a $q$-th power of integer. Then we have
\begin{equation}\label{equation:main-formula}
    \sum_{0\leq d\leq l} \{ k^{d+p/q} \} = \frac{l}{2} -\frac{1}{2\pi i }\sum_{0<|n|\leq k^l} \zeta\left(\frac{2n\pi i}{\log k}\right) \frac{e^{2n\pi i p/q}}{n}+o_{p,q,k}(l) \quad (\text{as }l\to \infty ),
\end{equation}
where $l$ runs over positive real numbers. 
\end{theorem}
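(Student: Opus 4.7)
The plan is to combine Perron's formula with a single geometric summation and then shift the resulting contour leftward to harvest residues at the poles $s_n = 2n\pi i/\log k$ of $1/(k^s-1)$, which produce the target zeta sum.

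Set $L=\lfloor l\rfloor$ and $\beta = p/q$. Since $k$ is not a $q$-th power, $k^{d+\beta}\notin\mathbb{Z}$, so $\{k^{d+\beta}\} = k^{d+\beta}-\lfloor k^{d+\beta}\rfloor$ and
\[
\sum_{0\le d\le l}\{k^{d+\beta}\} = \frac{k^{L+1+\beta}-k^{\beta}}{k-1}-\sum_{d=0}^{L}\lfloor k^{d+\beta}\rfloor.
\]
I apply truncated Perron at $c=1+1/L$ with height $T\asymp k^l$ to each $\lfloor k^{d+\beta}\rfloor$, then combine via $\sum_{d=0}^{L}k^{ds}=(k^{(L+1)s}-1)/(k^s-1)$ into one contour integral. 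Splitting the factor $k^{(L+1)s}-1$ decomposes the integral as $I_1^T - I_2^T$, where $I_1^T$ carries $k^{(L+1+\beta)s}$ and $I_2^T$ carries $k^{\beta s}$. The integrand of $I_2^T$ decays like $k^{(\beta-1)\Re s}$ as $\Re s\to+\infty$ (because $\beta<1$), so closing the contour rightward shows $I_2^T = O(1/T) = o(1)$.

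For $I_1^T$ I shift the contour from $\Re s = c$ to $\Re s = -\sigma$ with $\sigma=\sigma(l)\to\infty$, collecting three families of residues. (a) At the simple pole of $\zeta$ at $s=1$, the residue $k^{L+1+\beta}/(k-1)$ cancels the leading term of the geometric sum above. (b) At the double pole $s=0$, arising from $1/s$ together with the simple pole of $1/(k^s-1)$, a short Laurent computation using $\zeta(0)=-1/2$ and $\zeta'(0)=-\tfrac12\log(2\pi)$ gives residue $-(L+1+\beta)/2+\zeta'(0)/\log k+1/4$, and the piece $(L+1+\beta)/2 = l/2+O(1)$ is the main term in the theorem. (c) At each simple pole $s_n = 2n\pi i/\log k$ with $0<|n|<T\log k/(2\pi)$, the identities $k^{s_n}=k^{(L+1)s_n}=1$ and $k^{\beta s_n}=e^{2\pi i n\beta}$ give residue $\zeta(s_n)e^{2\pi i n p/q}/(2\pi i n)$. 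Choosing $T$ just above $2\pi k^l/\log k$ so that the enclosed $|n|$ are precisely those at most $k^l$, these residues sum, after the global sign from $S(l) = (\text{geometric}) - I_1^T$, to $-\frac{1}{2\pi i}\sum_{0<|n|\le k^l}\zeta(s_n)e^{2n\pi i p/q}/n$.

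The main obstacle is controlling three error contributions to $o_{p,q,k}(l)$: the Perron truncation error summed over $d$; the horizontal segments at $\Im s=\pm T$; and the left vertical segment at $\Re s=-\sigma$. For the Perron error, the standard estimate has a boundary term of shape $X^\epsilon/(T\|X\|)$ at $X=k^{d+\beta}$; Ridout's theorem applied to the algebraic irrational $k^{\beta}$ against rationals $n/k^d$ whose denominators have only prime factors of $k$ yields the lower bound $\|k^{d+\beta}\|\gg_\epsilon k^{-\epsilon d}$, which keeps the total Perron error $O_{p,q,k}(1)$ once $T\asymp k^l$. For the horizontal segments, pick $T$ at distance $\ge \pi/(2\log k)$ from every $2\pi n/\log k$, so that $|k^{\pm iT}-1|$ is bounded below; then the convexity bound $|\zeta(\sigma+iT)|\ll T^{\max(0,(1-\sigma)/2)+\epsilon}$ together with the functional equation for $\sigma<0$ shows the dominant contribution comes from $\Re s\approx c$ and is $O_{p,q,k}(1)$. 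For the left vertical segment, the functional-equation estimate $|\zeta(-\sigma+it)|\ll (1+|t|)^{1/2+\sigma}$ together with $|k^{(L+1+\beta)s}|=k^{-(L+1+\beta)\sigma}$ gives a bound $\ll T^{1/2+\sigma}k^{-(L+1+\beta)\sigma}$, which with $\sigma=l$ and $T\asymp k^l$ is exponentially small. Summing everything, the total error is $O_{p,q,k}(1) = o_{p,q,k}(l)$, and the residue bookkeeping delivers the asserted formula. The tightest coupling in the argument is between the truncation height $T\asymp k^l$ — forced on us because it is exactly this cutoff that picks out $|n|\le k^l$ — and the Ridout lower bound on $\|k^{d+\beta}\|$, which is precisely what is needed to keep the Perron boundary term negligible under that choice of $T$.
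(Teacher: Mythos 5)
Your decomposition is a genuinely different starting point from the paper's: you write $\sum\{k^{d+\beta}\} = (\text{geometric sum}) - \sum_d\lfloor k^{d+\beta}\rfloor$ and apply truncated Perron directly to each floor $\lfloor k^{d+\beta}\rfloor=\sum_{n\le k^{d+\beta}}1$ with generating series $\zeta(s)$, whereas the paper instead builds an auxiliary arithmetic function $h(n)$ whose Dirichlet series is $\varphi(k,qs)\eta(qs)=(1-k^{1-qs})\zeta(qs)/(1-k^{-qs})$ and applies Perron once. Your residue bookkeeping at $s=1$, at the double pole $s=0$, and at $s_n=2n\pi i/\log k$ is correct and delivers the same main term and the same zeta sum as the paper. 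However, there is a genuine gap in your error analysis, and it is not a minor one.

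The problem is an irreconcilable tension between the truncation height $T$ and your treatment of the left vertical integral, and you cannot win on both sides by absolute-value estimates. On the one hand, the standard Perron truncation error for a single $\lfloor k^{d+\beta}\rfloor$ is $\ll k^{d+\beta}\log(k^{d+\beta})/T$ (from the $\frac{x^c}{T}\zeta(c)$ term with $c$ near $1$), and summing over $0\le d\le L$ this is $\asymp Lk^L/T$. With your choice $T\asymp k^l$ (and $L=\lfloor l\rfloor$, so $k^L\le T<k^{L+1}$), this is $\asymp L=O(l)$, \emph{not} $O_{p,q,k}(1)$ as you claim; Ridout plays no role in this piece (it only controls the $\min(1,x/(T\|x\|))$ term, and even there your accounting is off: Ridout produces an error $O_\gamma(1)+O(\gamma l)$, which is ineffective $o(l)$, not $O(1)$). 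An $O(l)$ error makes the conclusion vacuous, since $A(l)$ and $l/2-(\text{zeta sum})$ are each trivially $O(l)$. To shrink the Perron error to $o(l)$ one must take $T\gg Lk^L$ and then reparametrize as the paper does (finding an integer $L$ with $L+\log_k L$ close to the target $l'$). On the other hand, once $T\asymp Lk^L$ your convexity-plus-functional-equation bound for the left vertical fails: the integrand at $\Re s=-\sigma$ is $\ll t^{\sigma-1/2}k^{-(L+1+\beta)\sigma}$, and integrating $|t|\le T$ gives $\ll T^{\sigma+1/2}k^{-(L+1+\beta)\sigma}=T^{1/2}\bigl(T/k^{L+1+\beta}\bigr)^{\sigma}$, which is $\ge T^{1/2}\asymp\sqrt{L}\,k^{L/2}$ for every $\sigma\ge 0$ (since $T/k^{L+1+\beta}>1$ when $T\asymp Lk^L$). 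So the left vertical cannot be shown small by pointwise bounds; the cancellation there is essential. In fact the left vertical is \emph{not} an error term: after applying the functional equation $\zeta(s)=\chi(s)\zeta(1-s)$ and expanding $1/(k^s-1)$ as a geometric series, a stationary-phase analysis (exactly what the paper does in Lemmas~\ref{lemma:evaluationS-step1}--\ref{lemma:S03}) produces a secondary sawtooth sum $\sum_m\psi(k^{m+L+1+\beta})$ of size $O(\log l)$ plus error terms $E_m$ that are themselves only $o(l)$ after another Ridout argument. That hidden main term, and the functional-equation/exponential-integral machinery needed to extract it, is precisely what your proposal is missing; without it, the left vertical either blows up (convexity bound, $T$ large) or the Perron error does ($T$ small).
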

We aim to give a proof of Theorem~\ref{theorem:generalization}. Roughly speaking, by substituting $p=q=1$ in Theorem~\ref{theorem:generalization}, the first term $l/2$ on the right-hand side of \eqref{equation:main-formula} vanishes and we obtain Theorem~\ref{theorem:main2}. 
In Section~\ref{section:evaluation-S}, we will prove Theorem~\ref{theorem:main2} by verifying the substitution. In Section~\ref{section:completion}, we will prove Theorem~\ref{theorem:generalization}.

\begin{remark} By the definition of $o_{p,q,k}(1)$, for all $\epsilon>0$ there exists $l_0=l_0(\epsilon, p,q,k)>0$ such that for all $l\geq l_0$ we have
\[
\frac{1}{l} \left\lvert \sum_{0\leq d\leq l} \{ k^{d+p/q} \} - \frac{l}{2} +\frac{1}{2\pi i }\sum_{0<|n|\leq k^l} \zeta\left(\frac{2n\pi i}{\log k}\right) \frac{e^{2n\pi i p/q}}{n} \right\rvert \leq \epsilon.
\]
The constant $l_0$ is non-computable since we will apply Ridout's theorem (Theorem~\ref{theorem:ridout}), which involves Diophantine approximations. The finiteness of Ridout's theorem is proven by an ineffective method. 
\end{remark}

\begin{remark}\label{remark:comparing}
To compare our results with \eqref{equation-OS}, let us give a proof of \eqref{equation-OS-modifying}. Fix any $s_0\in \mathbb{C}$ with $0<\Re(s_0)<1$. We define $C_d(s_0)$ as the right-hand side of \eqref{equation-OS}. Let $q\in \mathbb{N}$ and $a\in \{0,1,\ldots ,q-1\}$. Then, by \eqref{equation-OS}, partial summation, and $\zeta(\overline{s})=\overline{\zeta(s)}$, for each sufficiently large $M\in \mathbb{N}$, we have
\[
\frac{1}{2\pi i}\sum_{\substack{0<|n|\leq M \\ n \equiv a \text{ mod } q }  } \frac{\zeta(s_0+ind)}{n} = \frac{\Im(C_{qd}(s_0+ida))}{q\pi } \log M +o_{s_0,q,a,d}(\log M). 
\]
If $d=2\pi / \log k$ for some $k\in \mathbb{Z}_{\ge 2}$ and $s_0=\sigma_0\in (0,1)$, then we obtain 
\[
\Im(C_{qd}(s_0+ida))= \Im((1-k^{\sigma_0}e^{2\pi i a} )^{-1})=0.
\]
Thus, we have 
\begin{align*}
    &\frac{1}{2\pi i}\sum_{0<|n|\leq k^l } \zeta\left(\sigma_0+\frac{2n\pi i}{\log k}\right) \frac{e^{2n\pi i  p/q}}{n} \\
    &= \frac{1}{2\pi i} \sum_{a=0}^{q-1} e^{2a\pi i p/q} \sum_{\substack {0<|n|\leq k^l \\ n\equiv a \text{ mod }q }} \zeta\left(\sigma_0+\frac{2n\pi i}{\log k}\right) \frac{1}{n}= o_{\sigma_0,q,k}(l).
\end{align*}
Therefore, we conclude \eqref{equation-OS-modifying}.
\end{remark}

We state $f(X) \ll g(X)$ and $f(X) \ll_{a_1,\ldots, a_n} g(X)$  as $f(X)=O(g(X))$ and $f(X)=O_{a_1,\ldots, a_n}(g(X))$ respectively, where $g(X)$ is non-negative. In addition, we state $f(X)\asymp g(X)$ if $f(X)\ll g(X)\ll f(X)$. 
 
Let us fix $p$ and $q$ as relatively prime integers with $1\leq p\leq  q$. Let $k\geq 2$ be an integer which is not a $q$-th power of integers if $q\geq 2$. We consider the parameters $p$, $q$, and $k$ as constants. Thus, we omit the dependencies of these parameters.

\begin{lemma}\label{lemma:binary-expansion} For all $l\in \mathbb{N}$, we have
\[
\sum_{0\leq m\leq l}\{2^{m+p/q}\}=\sum_{\substack{0\leq d\leq l\\ \lfloor2^{d+p/q}\rfloor\in1+2\Z_{\geq0}}}1+O(1).
\]
\end{lemma}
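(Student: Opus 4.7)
The plan is to relate both sides to the binary digits of $2^{p/q}$. Since $1\le p<q$, we have $2^{p/q}\in(1,2)$, so I would start by writing its binary expansion
\[
2^{p/q}=1+\sum_{i=1}^{\infty}c_i\,2^{-i},\qquad c_i\in\{0,1\}.
\]
For every integer $m\ge 1$ the identity $2^{m+p/q}=2^m+\sum_{i=1}^m c_i\,2^{m-i}+\sum_{j\ge 1}c_{m+j}\,2^{-j}$ gives
\[
\lfloor 2^{m+p/q}\rfloor=2^m+\sum_{i=1}^{m}c_i\,2^{m-i},\qquad \{2^{m+p/q}\}=\sum_{j=1}^{\infty}c_{m+j}\,2^{-j},
\]
so reading off the last binary digit shows that $\lfloor 2^{m+p/q}\rfloor$ is odd if and only if $c_m=1$. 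Setting $c_0:=1$, consistent with $\lfloor 2^{p/q}\rfloor=1$, extends the criterion to $m=0$. Hence the right-hand side of the lemma equals $\sum_{d=0}^{l}c_d$.

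Next, I would compute the left-hand side by interchanging the order of summation:
\[
\sum_{m=0}^{l}\{2^{m+p/q}\}=\sum_{n=1}^{\infty}c_n\sum_{m=0}^{\min(l,\,n-1)}2^{m-n}.
\]
The inner geometric sum equals $1-2^{-n}$ when $1\le n\le l$ and equals $2^{-n}(2^{l+1}-1)$ when $n>l$. Substituting these expressions separates the left-hand side into a main term $\sum_{n=1}^{l}c_n$ and two error terms, namely $-\sum_{n=1}^{l}c_n\,2^{-n}$ and $(2^{l+1}-1)\sum_{n>l}c_n\,2^{-n}$. The first error is bounded by $\sum_{n\ge 1}2^{-n}=1$, and since
\[
\sum_{n>l}c_n\,2^{-n}=2^{-l}\{2^{l+p/q}\}\le 2^{-l},
\]
the second error is bounded by $2$. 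Both are $O(1)$, so the left-hand side equals $\sum_{n=1}^{l}c_n+O(1)$, which agrees with $\sum_{d=0}^{l}c_d$ up to the single boundary contribution $c_0=1$.

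There is no real obstacle here; the argument is a routine bookkeeping exercise once the binary expansion is in place, with the tail terms tamed by geometric decay. The only point requiring attention is the treatment of the boundary index $m=0$, so that the count of odd values of $\lfloor 2^{d+p/q}\rfloor$ agrees with the digit sum $\sum_{n=1}^{l}c_n$ uniformly in $l$, rather than differing by a growing correction.
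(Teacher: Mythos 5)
Your proof is correct and takes essentially the same approach as the paper: both expand $2^{p/q}$ in binary, express $\{2^{m+p/q}\}$ as a tail of the digit sequence, interchange the order of summation, and bound the two resulting error terms by geometric series. The only cosmetic differences are your choice to write the expansion as $1+\sum_{i\ge 1}c_i 2^{-i}$ rather than $\sum_{d\ge 0}c_d 2^{-d}$, and your somewhat more explicit justification of the step identifying $\sum_d c_d$ with the count of odd floors; the paper states that identification without comment.
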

\begin{proof} Let $\sum_{d=0}^\infty c_d2^{-d}$ be the binary expansion of $2^{p/q}$. Then, for all $m\geq0$, we have
\[
\{2^{m+p/q}\}= \{2^{m} 2^{p/q}\}=\left\{\sum_{d=0}^\infty c_d2^{m-d}\right\}=\sum_{d=m+1}^\infty c_d2^{m-d}=\sum_{d=1}^\infty c_{m+d}2^{-d},
\]
and hence
\begin{align*}
    \sum_{0\leq m\leq l}\{2^{m+p/q}\}
    &=\sum_{0\leq m\leq l}\sum_{d=1}^\infty c_{m+d}2^{-d}
    =\sum_{1\leq k\leq l}c_k\sum_{j=1}^k2^{-j}+\sum_{l+1\leq k}c_k\sum_{j=k-l}^k2^{-j}\\
    &=\sum_{1\leq k\leq l}c_k(1-2^{-k})+\sum_{l+1\leq k}c_k2^{-k+l+1}(1-2^{-l-1})\\
    &=\sum_{0\leq k\leq l}c_k+O(1)
    =\sum_{\substack{0\leq k\leq l;\\ \lfloor2^{k+p/q}\rfloor\in1+2\Z_{\geq0}}}1+O(1).
\end{align*}
\end{proof}

\begin{proof}[Proof of Theorem~\ref{theorem:main1} assuming Theorem~\ref{theorem:generalization}] Fix arbitrary integers $1\leq p<q$ with $\gcd(p,q)=1$. By combining Theorem~\ref{theorem:generalization} with $k=2$ and Lemma~\ref{lemma:binary-expansion}, we obtain Theorem~\ref{theorem:main1}.   
\end{proof}

For every $l\in \N$, we define
\[
A(l) =\sum_{\substack{0\leq d\leq l}}  \{k^{d+p/q} \}. 
\]
The goal of proving Theorem~\ref{theorem:generalization} is to obtain an asymptotic formula of $A(l)$.
For all $\alpha>1$ and $\Re(s)>0$, we define $\varphi(\alpha,s)=\sum_{n=0}^\infty \alpha^{-ns}$.  We set
\begin{align*}
    b(n)=b_k(n)=
    \begin{cases}
	1-k  &  \textup{if } k\mid n  ,\\
	1  &  \textup{otherwise}.
    \end{cases}
\end{align*}
Furthermore, for all $\Re(s)>1$, we define
\[
\eta(s)=\eta_k(s):=\sum_{n=1}^\infty\frac{b_k(n)}{n^s}=(1-k^{1-s})\zeta(s).
\]
Remark that $\eta_k(s)$ is coincident with the eta function $(1-2^{1-s})\zeta(s)$ if $k=2$. Then for every $\Re(s)>1/q$, it follows that 
\begin{align*}
    \varphi(k,qs)\eta(qs)
    &=\left(\sum_{n=0}^\infty\frac{1}{k^{qns}}\right)\left(\sum_{n=1}^\infty\frac{b(n)}{n^{qs}}\right)
    =\left(\sum_{n=1}^\infty\frac{f(n)}{n^s}\right)\left(\sum_{n=1}^\infty\frac{g(n)}{n^s}\right),
\end{align*}
where 
\begin{align*}
    f(d):=
    \begin{cases}
        1  &  \textup{if } \exists n\in \Z_{\ge 0} \textup{ s.t. } d=k^{qn},\\
	0  &  \textup{otherwise},
    \end{cases}\quad 
    g(d):=
    \begin{cases}
        b(n)  &  \textup{if } \exists n\in \Z_{>0} \textup{ s.t. } d=n^q,\\
	0  &  \textup{otherwise}.
    \end{cases}
\end{align*}
For every $n\in \mathbb{N}$, we define $h(n)=\sum_{d\mid n } f(d)g(n/d)$. Then the Dirichlet multiplication leads to
\begin{equation}\label{equation:Dirichlet-series}
\varphi(k,qs)\eta(qs)=\sum_{n=1}^\infty \frac{h(n)}{n^s}. 
\end{equation}

\begin{lemma}\label{Lemma:Sum-of-h} For every $x\geq 2$, we have
\begin{equation}\label{equation:key-sum}
    \sum_{n\leq x}h(n)=(k-1)\sum_{\substack{0\leq d\leq q^{-1}\log_k x}}\{x^{1/q}/k^d\}+ O(1).
\end{equation}
\end{lemma}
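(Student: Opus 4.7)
My plan is to compute $h(n)$ explicitly from its Dirichlet convolution definition and then evaluate the partial sum $\sum_{n\leq x}h(n)$ by reindexing. Since $f$ is supported on the values $k^{qj}$ with $j\geq 0$ and $g$ is supported on the $q$-th powers $m^q$, the convolution $h(n)=\sum_{d\mid n} f(d)\,g(n/d)$ vanishes unless $n$ admits a decomposition $n=k^{qj}m^q=(k^j m)^q$, i.e.\ unless $n$ is a $q$-th power. Writing $n=N^q$, the nonzero summands correspond exactly to the $j\geq 0$ with $k^j\mid N$, each contributing $b(N/k^j)$, so
\[
h(N^q)=\sum_{\substack{j\geq 0\\ k^j\mid N}}b(N/k^j).
\]

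Next I would interchange the order of summation. Substituting $M=N/k^j$ in the inner sum yields
\[
\sum_{n\leq x}h(n)=\sum_{N\leq x^{1/q}}h(N^q)=\sum_{j\geq 0}\sum_{M\leq x^{1/q}/k^j}b(M),
\]
and the inner sum is empty as soon as $x^{1/q}/k^j<1$, so the effective range of $j$ is $0\leq j\leq q^{-1}\log_k x$, matching exactly the summation range in the statement. For the inner sum I would use the definition of $b$ to get $\sum_{M\leq y}b(M)=\lfloor y\rfloor-k\lfloor y/k\rfloor$; writing $\lfloor z\rfloor=z-\{z\}$ causes the linear parts to cancel, leaving
\[
\sum_{M\leq y}b(M)=k\{y/k\}-\{y\}
\]
with $y=x^{1/q}/k^j$.

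Finally, summing the last expression over $j$ from $0$ to $D:=\lfloor q^{-1}\log_k x\rfloor$ telescopes: shifting the index in the $k\{y/k\}$ term and recombining gives
\[
\sum_{n\leq x}h(n)=(k-1)\sum_{d=1}^{D}\{x^{1/q}/k^d\}+k\{x^{1/q}/k^{D+1}\}-\{x^{1/q}\},
\]
where the two boundary terms are $O(1)$ because $x^{1/q}/k^{D+1}\in[0,1)$. Adjusting the lower summation index from $1$ to $0$ contributes only another $(k-1)\{x^{1/q}\}=O(1)$ term, producing the claimed identity. The argument is essentially mechanical once the convolution structure is unpacked; the only point requiring real care will be tracking the boundary of the $j$-summation and bookkeeping the telescoping so that the coefficient $k-1$ emerges together with the correct $O(1)$ remainder.
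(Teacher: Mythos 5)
Your proof is correct and follows essentially the same route as the paper: unpack the Dirichlet convolution to reduce $\sum_{n\le x}h(n)$ to a double sum $\sum_{j\ge 0}\sum_{M\le x^{1/q}/k^j}b(M)$, evaluate the inner sum as $\lfloor y\rfloor-k\lfloor y/k\rfloor=k\{y/k\}-\{y\}$, and telescope. The only cosmetic difference is that you first observe $h$ is supported on $q$-th powers and compute $h(N^q)$ directly, and that you spell out the telescoping and boundary terms which the paper leaves implicit; both are faithful to the paper's argument.
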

\begin{proof}
By the definition of $f(\cdot)$ and $g(\cdot)$, it follows that 
\[
h(n) =\sum_{d\mid n} f(d)g(n/d) = \sum_{\substack{d\geq 0 \\ k^{qd}\mid n}} g(n/k^{qd}),
\]
and hence 
\[
\sum_{n\leq x}h(n)=\sum_{n\leq x}\sum_{\substack{d\geq0 \\ k^{qd}\mid n}}g(n/k^{qd})=\sum_{0\leq d\leq q^{-1}\log_k x}\sum_{1\leq n\leq x/k^{qd}}g(n).
\]
In addition, the definitions of $g(\cdot)$ and $b(\cdot)$ yield 
\begin{align*}
    \sum_{1\leq n\leq x/k^{qd}}g(n)
    &=\sum_{1\leq j^q\leq x/k^{qd}}b(j)=\sum_{1\leq j\leq x^{1/q} /k^{d}} b(j)= \lfloor x^{1/q}/k^d \rfloor- k\lfloor x^{1/q}/k^{d+1} \rfloor  \\
    &= -\{x^{1/q}/k^d\}+k\{x^{1/q}/k^{d+1} \}. 
\end{align*}
Therefore, we conclude \eqref{equation:key-sum}.
\end{proof}

By applying Lemma~\ref{Lemma:Sum-of-h} with $x=k^{ql+p}$, we observe that
\begin{align} \label{equation:numberof1}
    \sum_{n\leq x} h(n) &= (k-1)\sum_{0\leq d\leq l }\{ k^{(l-d)+p/q} \} +O(1)\\
    &=(k-1)\sum_{0\leq d\leq l } \{k^{d+p/q} \} +O(1)=(k-1)A(l)+O(1), \nonumber
\end{align}
and hence, the mean value of $h(n)$ is directly connected to  $A(l)$.

\section{Outline of the proof of Theorem~\ref{theorem:generalization}}\label{section:sketch}

For simplicity, we do not consider the case $q=1$ in this section. Thus, the integers $p$ and $q$ are relatively prime with $1\leq p<q$, and $k$ is an integer larger than or equal to $2$ which is not a $q$-th power of integers. Let $l\in \N$ be a sufficiently large parameter, and let $x=k^{ql+p}$. We will first apply Perron's formula to obtain an asymptotic formula of $\sum h(n)$. 

\begin{lemma}[Perron's formula]\label{Lemma:Perrons-formula}
Let $\alpha(s)$ be the Dirichlet series of the form $\alpha(s)=\sum_{n=1}^\infty a_n n^{-s}$. Let $\sigma_a$ be the abscissa of absolute convergence of $\alpha(s)$. If $c>\max (0,\sigma_a)$, $x>0$, and $T> 0$, then we have
\[
\sum_{n\leq x}{}^\prime a_n =\frac{1}{2\pi i} \int_{c-iT}^{c+iT} \alpha(s) \frac{x^s}{s} ds +R   
\]
and 
\[
R\ll \sum_{\substack{x/2<n<2x\\ n\not=x}}|a_n|\min\left(\frac{x}{T|x-n|},1\right) +\frac{4^{c}+x^{c}}{T}\sum_{n=1}^\infty\frac{|a_n|}{n^c},
\]
where $\sum_{n\leq x}^{\prime}$ indicates that if $x$ is an integer, then the last term is to be counted with weight $1/2$. 
\end{lemma}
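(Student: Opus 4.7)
The plan is the classical Perron strategy: establish the explicit evaluation of the truncated Mellin integral $\tfrac{1}{2\pi i}\int_{c-iT}^{c+iT} y^s\,ds/s$ as a discontinuous factor, apply it termwise to $\alpha(s) = \sum a_n n^{-s}$ with $y = x/n$, and bound the resulting error sum by partitioning the range of $n$ according to its proximity to $x$.

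\textbf{Step 1 (the discontinuous integral).} First I would prove the auxiliary identity: for $c, T > 0$ and $y > 0$,
\[
\frac{1}{2\pi i}\int_{c-iT}^{c+iT}\frac{y^s}{s}\,ds \;=\; \delta(y) + E(y,c,T),
\]
where $\delta(y)$ is $1$, $1/2$, or $0$ according as $y > 1$, $y = 1$, or $y < 1$, and $|E(y,c,T)| \ll y^c\min(1, 1/(T|\log y|))$ for $y \ne 1$ and $|E(1,c,T)| \ll c/T$. The bound $y^c/(T|\log y|)$ arises from closing the contour to the left (when $y > 1$, collecting the residue $1$ at $s = 0$) or to the right (when $y < 1$, no pole), and bounding the horizontal segments by $\int y^\sigma\,d\sigma/T$; the vertical at $\Re s = \mp U$ vanishes as $U \to \infty$ because $y^{\mp U}/U \to 0$. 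The companion bound $y^c$ and the $c/T$ bound at $y = 1$ follow from direct estimation (for $y = 1$, the integral equals $\arctan(T/c)/\pi$).

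\textbf{Step 2 (termwise application).} Setting $y = x/n$, multiplying by $a_n$, and using the absolute convergence of $\sum |a_n| n^{-c}$ (valid since $c > \sigma_a$) to justify exchanging sum and integral yields
\[
\frac{1}{2\pi i}\int_{c-iT}^{c+iT}\alpha(s)\frac{x^s}{s}\,ds \;=\; \sum_{n \le x}{}^\prime a_n \;+\; \sum_{n=1}^\infty a_n\, E(x/n,c,T),
\]
in which the weight $1/2$ at $n = x$ is precisely the value $\delta(1) = 1/2$ from Step 1.

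\textbf{Step 3 (error estimate and principal obstacle).} The error sum is partitioned into three ranges. For $n \in (x/2, 2x) \setminus \{x\}$, I would use $|\log(x/n)| \asymp |x - n|/x$ (from $|\log(1+u)| \asymp |u|$ on $|u| \le 1/2$) and $(x/n)^c \asymp_c 1$ to extract the first summand $\sum|a_n|\min(x/(T|x-n|), 1)$ of $R$. For $n$ outside this range one has $|\log(x/n)| \ge \log 2$, so $|E| \ll (x/n)^c/T$; since $(x/n)^c = x^c/n^c$, the sum over such $n$ is controlled by $(x^c/T)\sum|a_n|/n^c$, with the residual constants from the trivial bound $|E| \le (x/n)^c \le 2^c$ on the medium range absorbed into the $4^c/T$ prefactor. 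The analytic core is Step 1 and is classical; the only real subtlety is the clean bookkeeping in Step 3 that produces the stated constants $(4^c + x^c)/T$ exactly.
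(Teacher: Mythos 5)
Your proposal reconstructs the standard proof of Perron's formula, which is exactly what the paper invokes by citing Montgomery--Vaughan [Theorem~5.2, Corollary~5.3]: the three-step plan (discontinuous factor, termwise application justified by absolute convergence at $\Re s=c>\sigma_a$, and range-splitting of the error sum over $n\le x/2$, $x/2<n<2x$, $n\ge 2x$) is precisely the classical argument in that reference, so you are on the same track rather than a different one.

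The one genuine imprecision is in Step~1. The companion bound $|E(y,c,T)|\ll y^c$, which feeds the ``$1$'' branch of the $\min$ in the first part of $R$, does \emph{not} follow from ``direct estimation'' of the vertical integral: taking absolute values under the integral sign gives $\frac{y^c}{2\pi}\int_{-T}^{T}\frac{dt}{|c+it|}$, which grows like $y^c\log T$ and is not $\ll y^c$ uniformly in $T$. The cancellation is captured by closing the contour with a \emph{circular arc} centred at the origin through $c\pm iT$ (to the left when $y>1$, collecting the residue $1$ at $s=0$; to the right when $y<1$, with no pole enclosed): on the arc one has $|y^s/s|\le y^c/\sqrt{c^2+T^2}$ and the arc length is at most $2\pi\sqrt{c^2+T^2}$, so the arc contributes $O(y^c)$. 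With that repair the bound $|E|\ll y^c\min\bigl(1,\,1/(T|\log y|)\bigr)$ is established, and Steps~2 and~3 are sound and match the derivation of Corollary~5.3 from Theorem~5.2 in the cited source.
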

\begin{proof}
See \cite[Theorem~5.2, Corollary~5.3]{Montgomery-Vaughan} .
\end{proof}

Recall that the corresponding Dirichlet series of $h(n)$ is $\varphi(k, qs) \eta (qs)$ from \eqref{equation:Dirichlet-series}. Therefore, for $c>1/q$ and $T>0$, Lemma~\ref{Lemma:Perrons-formula} with $a_n=h(n)$ implies 
\begin{equation}\label{asymp:h-to-Integral}
    \sum_{n\leq x}\: h(n) = \frac{1}{2\pi i} \int_{c-iT}^{c+iT} \varphi(k,qs) \eta(qs) \frac{x^s}{s} ds+(\text{errors}).
\end{equation}
The summation $ \sum_{n\leq x}$ should be written as $ \sum_{n\leq x}'$, but we ignore the gaps between these sums. Let us also skip to evaluate all the errors. In Section~\ref{section:perron}, we will do a precise discussion on \eqref{asymp:h-to-Integral}. By the definitions of $\varphi$ and $\eta$, 
\[
\varphi(k,qs) \eta(qs)\frac{x^s}{s}= \frac{1-k^{1-qs}}{1-k^{-qs}} \zeta (qs)\frac{x^s}{s}(\eqqcolon \Phi(s; x)).
\]
Let $c=c(l)>1/q$ and $T=T(l)>0$ be suitable parameters. By substituting $x=k^{ql+p}$,  the equations \eqref{equation:numberof1} and \eqref{asymp:h-to-Integral} yield that
\begin{equation}\label{asymp:01}
    (k-1) A(l) =  \sum_{n\leq k^{ql+p} } h(n) +O(1)=\frac{1}{2\pi i} \int_{c-iT}^{c+iT} \Phi (s; k^{ql+p}) ds+(\text{errors}).
\end{equation}
We shall apply the residues theorem similarly to the analytic proof of the prime number theorem (see \cite[Chapter~6]{Montgomery-Vaughan}). We move the vertical integral from $\int_{c-iT}^{c+iT}$ to $\int_{\sigma-iT}^{\sigma+iT}$ for some fixed $\sigma<0$, where we will take $\sigma=-1/(2q)$ in Section~\ref{section:evaluation-S}. The residues of $\Phi(s;k^{ql+p})$ are 
\begin{equation}\label{residues}
    \left\{
    \begin{aligned}
        \displaystyle{(1-k)\zeta\left(\frac{2n\pi i}{\log k}\right) \frac{e^{2n\pi i p/q}}{2n\pi i}} &\quad \text{at}\quad s=\frac{2n\pi i}{q\log k} \text{ for } n\neq 0,\\
        (k-1)\frac{l}{2} +O(1) &\quad \text{at}\quad s=0.
    \end{aligned}
    \right.
\end{equation}
We will observe \eqref{residues} in Section~\ref{section:perron} and Section~\ref{section:evaluation-RandS}. Therefore, by applying the residue theorem,
\begin{align} \nonumber
    A(l)&= \frac{1}{2\pi i(k-1)} \int_{c-iT}^{c+iT} \Phi(s; k^{ql+p}) ds+(\text{errors})\\ \label{equation:sketch1}
    &= \frac{l}{2} -\frac{1}{2\pi i }\sum_{0<|n|\leq \frac{\log k}{2\pi }T} \zeta\left(\frac{2n\pi i}{\log k}\right) \frac{e^{2n\pi i p/q}}{n}\\ \nonumber
    &\hspace{50pt}+\frac{1}{2\pi i(k-1)} \int_{\sigma-iT}^{\sigma+iT} \Phi(s;k^{ql+p})  ds+(\text{errors}) .
\end{align}
We will calculate the errors in Section~\ref{section:evaluation-RandS}. 

By the functional equation of the Riemann zeta function, we have  $\zeta (s) =\chi (s) \zeta(1-s)$, where $\chi(s)=2^{s-1}\pi^s\sec(\pi s/2)/\Gamma(s)$. Therefore, recalling the definition of $\Phi$, we see that
\[
\frac{1}{2\pi i(k-1)}
\int_{\sigma-iT}^{\sigma+iT} \Phi(s;k^{ql+p})  ds = \frac{1}{2\pi i(k-1)}
\int_{\sigma-iT}^{\sigma+iT} \frac{1-k^{1-qs}}{1-k^{-qs}} \chi(qs) \zeta(1-qs) k^{s(ql+p)} \frac{ds}{s}.
\]
In addition, for every $\Re(s)<0$, 
we observe that 
\begin{align}\label{equation:geoseries}
    \frac{1-k^{1-qs}}{1-k^{-qs}}&=(k^{1-qs}-1)\cdot \frac{k^{qs}}{1-k^{qs}}=(k^{1-qs}-1) \left(\sum_{m=1}^\infty k^{qms} \right)\\ \nonumber
    &= \sum_{m=1}^\infty (k^{1-qs}-1)\cdot k^{qms}
    = \sum_{m=1}^\infty k\cdot k^{(m-1)qs} - \sum_{m=1}^\infty k^{qms} = \sum_{m=0}^\infty a_m k^{qms},
\end{align}
where $a_0=k$, and $a_m=k-1$ for every $m\geq 1$. Therefore,  by choosing $\sigma=-1/(2q)<0$, 
\begin{align*}
    &\frac{1}{2\pi i(k-1)} \int_{\sigma-iT}^{\sigma+iT} \Phi(s;k^{ql+p})  ds\\
    &=\sum_{m=0}^\infty \sum_{n=1}^\infty \frac{a_m}{2\pi i (k-1)} \int_{\sigma-iT}^{\sigma+iT}  k^{qms}  n^{qs-1}k^{s(ql+p)} \chi(qs)\frac{ds}{s}\\
    &=\sum_{m=0}^\infty \sum_{n=1}^\infty \frac{a_m}{2\pi  (k-1)} k^{-(m+l+p/q)/2} n^{-3/2} \int_{-T}^{T}  (k^{m+l+p/q} n )^{iqt}  \frac{\chi(-1/2+iqt)}{-1/(2q)+it } dt.
\end{align*}
In Section~\ref{section:evaluation-S}, we will apply the following lemmas to calculate exponential integrals to find an asymptotic formula of the above integral.
\begin{lemma}[the first derivative test]\label{lemma:exponential-sums}
Let $F(x)$ be a real differentiable function defined on $[a,b]$ such that $F'(x)$ is monotonic throughout the interval $[a,b]$. Suppose that there exists $M>0$ such that for every $x\in [a,b]$, we have $|F(x)|\geq M$. Then 
\[
\left\lvert \int_a^be^{iF(x)}dx\right\rvert \leq\frac{4}{M}.
\]
\end{lemma}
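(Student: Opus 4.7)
The plan is the classical argument for the first derivative test: integrate by parts to trade the oscillation for the decay $1/|F'|$, using monotonicity of $F'$ to control both the boundary terms and the remaining integral. I read the hypothesis in the natural way as $|F'(x)|\geq M$ on $[a,b]$, which is the customary form of the first derivative test (the conclusion would fail under a bound on $|F|$ alone, as a constant $F$ already shows). Without loss of generality I may assume $F'(x)>0$ throughout $[a,b]$; the case $F'<0$ reduces to this one on passing to the complex conjugate $\overline{e^{iF(x)}}=e^{-iF(x)}$, since then $(-F)'=-F'>0$ and $|(-F)'|=|F'|\geq M$.

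The starting identity is $e^{iF(x)}=\frac{1}{iF'(x)}\frac{d}{dx}e^{iF(x)}$. Assuming momentarily that $F$ is twice differentiable, integration by parts gives
\[
\int_a^b e^{iF(x)}\,dx = \left[\frac{e^{iF(x)}}{iF'(x)}\right]_a^b + \int_a^b \frac{F''(x)}{i(F'(x))^2}\,e^{iF(x)}\,dx.
\]
The boundary term is bounded in absolute value by $1/|F'(a)|+1/|F'(b)|\leq 2/M$. For the remaining integral the key observation is that monotonicity of $F'$ forces $F''$ to have constant sign on $[a,b]$, hence so does $F''/(F')^2$, so
\[
\left|\int_a^b \frac{F''(x)}{(F'(x))^2}\,e^{iF(x)}\,dx\right| \leq \int_a^b \left|\frac{F''(x)}{(F'(x))^2}\right|dx = \left|\frac{1}{F'(a)}-\frac{1}{F'(b)}\right| \leq \frac{2}{M}.
\]
Adding the two pieces produces the claimed bound $4/M$.

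The one real obstacle is that the hypothesis only demands $F'$ be monotone, not differentiable, so one cannot literally invoke $F''$. I would deal with this by splitting $e^{iF(x)}$ into its real and imaginary parts and applying Bonnet's second mean value theorem to the monotone factor $1/F'(x)$ paired with the integrators $d(\sin F(x))$ and $d(\cos F(x))$; this reproduces the same $4/M$ bound without requiring $F\in C^{2}$. Alternatively, since this is a standard lemma of analytic number theory, one may simply cite it from a reference such as Titchmarsh's book on the Riemann zeta function.
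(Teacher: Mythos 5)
Your proposal is correct, and the final form you arrive at is exactly Titchmarsh's argument (Lemma 4.2 in his book), which is all the paper offers as a proof — the paper's entire proof is the citation ``See \cite[Lemma~4.2]{Titchmarsh}''. You also correctly spot that the hypothesis as printed in the statement, $|F(x)|\geq M$, must be a typo for $|F'(x)|\geq M$: as you observe, a constant phase trivially refutes the stated version, and Titchmarsh's original hypothesis is a uniform lower bound on $|F'|$.

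One small remark on the integration-by-parts variant you sketch for $C^2$ phases: your chain of estimates is valid and yields the advertised $4/M$, but it is slightly loose. Since $F'\geq M>0$ forces both $1/F'(a)$ and $1/F'(b)$ into $(0,1/M]$, the integral term is in fact bounded by $\bigl|1/F'(a)-1/F'(b)\bigr|\leq 1/M$ rather than $2/M$, so that argument actually gives $3/M$. This does not matter for the lemma, but it is worth noting that the constant is not sharp. More importantly, you correctly identify that this route presupposes a second derivative, which the lemma does not grant, and you correctly patch it with Bonnet's second mean value theorem applied separately to $\cos F$ and $\sin F$ against the monotone weight $1/F'$; that is precisely Titchmarsh's proof, so your fallback and the paper's citation coincide.
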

\begin{proof}
See {\cite[Lemma~4.2]{Titchmarsh}}.  
\end{proof}
\begin{lemma}[the second derivative test]\label{lemma:second-derivative}
Let $F(x)$ be a twice differentiable real function defined on $[a,b]$. Suppose that there exists $r>0$ such that for every $x\in [a,b]$, we have $|F''(x)|\geq r$. Then 
\[
\left\lvert \int_a^be^{iF(x)}dx\right\rvert \leq \frac{8}{r^{1/2}}.
\]
\end{lemma}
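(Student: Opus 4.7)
The plan is to reduce the second derivative test to the first derivative test (Lemma~\ref{lemma:exponential-sums}) by splitting $[a,b]$ into a piece where $|F'|$ is small (controlled trivially by length) and its complement where $|F'|$ is bounded below (controlled by Lemma~\ref{lemma:exponential-sums}), and then to optimise the splitting threshold. The essential structural input is that $F'$ is strictly monotonic on $[a,b]$: since $|F''(x)|\geq r>0$ throughout the interval and $F''$ enjoys the Darboux property as a derivative, $F''$ keeps a constant sign, which forces monotonicity of $F'$.

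For a parameter $\delta>0$ to be chosen later, set $E_\delta=\{x\in[a,b]:|F'(x)|<\delta\}$. Monotonicity of $F'$ ensures that $E_\delta$ is a (possibly empty) sub-interval of $[a,b]$, and the mean value theorem applied to $F'$ gives $|F'(y)-F'(x)|\geq r|y-x|$ for all $x,y\in[a,b]$. Therefore the length of $E_\delta$ is at most $2\delta/r$, so
\[
\left\lvert \int_{E_\delta} e^{iF(x)}\,dx\right\rvert \leq \frac{2\delta}{r}.
\]
The complement $[a,b]\setminus E_\delta$ is a union of at most two intervals on each of which $F'$ is monotonic with $|F'|\geq\delta$; applying Lemma~\ref{lemma:exponential-sums} with $M=\delta$ on each contributes at most $4/\delta$. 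Combining these estimates,
\[
\left\lvert \int_a^b e^{iF(x)}\,dx\right\rvert \leq \frac{2\delta}{r}+\frac{8}{\delta},
\]
and choosing $\delta=2\sqrt{r}$ balances the two terms and yields exactly $8/r^{1/2}$.

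The only slightly delicate point is the structural claim that $E_\delta$ is a single interval and its complement splits into at most two intervals on which Lemma~\ref{lemma:exponential-sums} applies, which hinges on the strict monotonicity of $F'$ deduced from $|F''|\geq r>0$. Everything else is standard: a length bound from $|F''|\geq r$, the first derivative test away from critical points of $F$, and a one-variable optimisation. The form of the constant $8/r^{1/2}$ is forced by the scaling $\delta\asymp\sqrt{r}$.
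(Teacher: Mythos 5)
Your proof is correct, and it is essentially the standard van der Corput argument that the paper simply cites from Titchmarsh (Lemma~4.4). The key steps all check out: $F''$ never vanishes and has the Darboux property (being a derivative), so it keeps constant sign and $F'$ is strictly monotone; the mean value theorem for $F'$ gives $|F'(y)-F'(x)|\geq r|y-x|$, hence $|E_\delta|\leq 2\delta/r$; the complement is at most two intervals on which $F'$ is monotone with $|F'|\geq\delta$, so Lemma~\ref{lemma:exponential-sums} (which, note, should read $|F'(x)|\geq M$ rather than $|F(x)|\geq M$) gives $8/\delta$ in total; and $\delta=2\sqrt{r}$ balances $2\delta/r+8/\delta$ to yield exactly $8/r^{1/2}$. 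The only cosmetic difference from Titchmarsh's write-up is that you threshold on the value of $|F'|$ rather than on the distance to the zero of $F'$, but these are equivalent parameterisations of the same decomposition.
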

\begin{proof}
See {\cite[Lemma~4.4]{Titchmarsh}}.  
\end{proof}
\begin{lemma}[the stationary phase method]\label{lemma:stationary-phase}
Let $F(x)$ be a real-valued function defined on $[a,b]$ which is differentiable up to the third order. Suppose that there exist $\lambda_2, \lambda_3>0$ and $A>0$ such that for every $x\in [a,b]$, we have 
\begin{gather} \label{condition:stationary1}
0<\lambda_2 \leq -F''(x) < A \lambda_2  \\ \label{condition:stationary2} 
|F'''(x)| <A\lambda_3. 
\end{gather}
Let $F'(c)=0$, where $c\in [a,b]$. Then 
\begin{align*}
    \int_a^b e^{iF(x)} dx &= 
    (2\pi)^{\frac{1}{2}} \frac{e^{-\pi i/4 +iF(c)} }{|F''(c)|^{1/2}}  +O(\lambda_2^{-\frac{4}{5}}\lambda_3^{\frac{1}{5}} )\\ &\quad +O\left(\min\left(|F'(a)|^{-1},\lambda_2^{-\frac{1}{2}} \right)\right) +O\left(\min\left(|F'(b)|^{-1},\lambda_2^{-\frac{1}{2}} \right)\right) . 
\end{align*}
\end{lemma}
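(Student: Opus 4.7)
The plan is to carry out a standard stationary-phase analysis, splitting the integration at the critical point $c$. By Taylor's theorem combined with \eqref{condition:stationary2},
$$F(x) = F(c) + \tfrac{1}{2}F''(c)(x-c)^2 + R(x), \qquad |R(x)| \le \tfrac{1}{6}A\lambda_3|x-c|^3,$$
while \eqref{condition:stationary1} gives $F''(c)\le-\lambda_2<0$. I would pick a window radius $\delta>0$ (to be optimized) and write
$$\int_a^b e^{iF(x)}\,dx = I_{\mathrm{in}}+I_{\mathrm{out}},\qquad I_{\mathrm{in}} = \int_{[a,b]\cap[c-\delta,c+\delta]} e^{iF(x)}\,dx.$$

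On the inner piece I would factor out $e^{iF(c)}$ and replace $e^{iR(x)}$ by $1$ using $|e^{iR(x)}-1|\le|R(x)|$, at a cost of $O(A\lambda_3\delta^4)$. This reduces $I_{\mathrm{in}}$ to a truncated Gaussian which I extend to $\mathbb{R}$ via the Fresnel identity $\int_{\mathbb{R}}e^{iF''(c)u^2/2}\,du = (2\pi)^{1/2}|F''(c)|^{-1/2}e^{-i\pi/4}$ (the sign $-\pi/4$ being fixed by $F''(c)<0$), producing the claimed main term. The discarded Gaussian tails are controlled by applying Lemma~\ref{lemma:exponential-sums} to $u\mapsto F''(c)u^2/2$: on a tail reaching past $\pm\delta$, the derivative satisfies $|F''(c)u|\ge\lambda_2\delta$, giving $O(1/(\lambda_2\delta))$, while on a tail cut off at an endpoint $a$ lying inside the window it yields $O(1/(\lambda_2|c-a|))=O(1/|F'(a)|)$; the trivial bound $O(\lambda_2^{-1/2})$ on any Fresnel tail (obtained from Lemma~\ref{lemma:second-derivative}) caps this, producing the $\min$-form of the statement.

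On the outer region $[a,c-\delta]\cup[c+\delta,b]$ (present only when the corresponding endpoint falls outside the inner window), monotonicity of $F'$ and $|F''|\ge\lambda_2$ give $|F'(x)|\ge\lambda_2|x-c|\ge\lambda_2\delta$. One integration by parts bounds each outer piece by $O(1/|F'(a)|)+O(1/(\lambda_2\delta))$ (respectively $|F'(b)|^{-1}$), where the internal term $\int|F''|/F'^2$ is $O(1/(\lambda_2\delta))$ by direct computation using $|F''|\le A\lambda_2$; once more, the second derivative test caps the boundary term by $O(\lambda_2^{-1/2})$. Balancing $A\lambda_3\delta^4$ against $1/(\lambda_2\delta)$ by taking $\delta\asymp(\lambda_2\lambda_3)^{-1/5}$ produces the stated error $O(\lambda_2^{-4/5}\lambda_3^{1/5})$, while the boundary contributions assemble into the two $\min$-alternatives.

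The main obstacle is bookkeeping rather than conceptual: one must uniformly handle the several cases corresponding to the relative positions of $c$, $a$, $b$, and $\delta$, and consistently absorb the constant $A$ into implicit constants when comparing $F''(c)$ to $F''(x)$ and $F'''(x)$ to $\lambda_3$. The crude step $|e^{iR(x)}-1|\le|R(x)|$ is sufficient for the stated exponent $-4/5$; a more refined integration by parts could improve it but is not needed here.
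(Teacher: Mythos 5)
The paper does not prove this lemma; it simply cites \cite[Lemma~4.6]{Titchmarsh}, whose proof is the classical stationary-phase argument via quadratic approximation in a window about the critical point. Your proposal reproduces that argument faithfully: the splitting at $c$, the Taylor estimate $|R(x)|\le\tfrac16 A\lambda_3|x-c|^3$ giving the $O(\lambda_3\delta^4)$ cost of linearising $e^{iR}$, the Fresnel evaluation of the extended Gaussian (with the sign $-\pi/4$ correctly tied to $F''(c)<0$), first- and second-derivative tests to control the discarded tails and the outer pieces, and the balance $\delta\asymp(\lambda_2\lambda_3)^{-1/5}$ producing the exponent $-4/5$. The one point worth making explicit, which you implicitly use, is that passing between $\lambda_2|c-a|$ and $|F'(a)|$ requires the upper bound $|F'(a)|\le A\lambda_2|c-a|$ from \eqref{condition:stationary1}, so the identification $O(1/(\lambda_2|c-a|))=O(1/|F'(a)|)$ carries an $A$-dependent constant; since the statement's implied constants are allowed to depend on $A$, this is harmless. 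Your plan is sound and matches the source.
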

\begin{proof}
See {\cite[Lemma~4.6]{Titchmarsh}}. 
\end{proof}
By applying Lemmas~\ref{lemma:exponential-sums} to \ref{lemma:stationary-phase} in Section~\ref{section:evaluation-S}, we will show that 
\begin{equation}\label{equation:sketch2}
    \frac{1}{2\pi i(k-1)} \int_{\sigma-iT}^{\sigma+iT} \Phi(s;k^{ql+p})  ds=\frac{(q-1)l}{2} -\sum_{0\leq m\leq (q-1)l} \{k^{m+l+p/q} \}+(\text{errors}).
\end{equation}
Here the errors on the right-hand side contain 
\begin{equation}\label{errors-of-sawtooth}
    \sum_{0\leq m\leq (q-1)l} \min \left( \frac{1}{2}, \frac{C}{lk^{(q-1)l-m}\|k^{m+l}\cdot k^{p/q} \|}\right)
\end{equation}
for some constant $C>0$. The error \eqref{errors-of-sawtooth} comes from the partial Fourier sums of the sawtooth function. To investigate lower bounds for $\|k^{m+l}\cdot k^{p/q} \|$, we will apply Ridout's theorem in Section~\ref{section:completion}. Let $\epsilon$ be an arbitrarily small positive real number. By the theorem, for every $a\in \mathbb{Z}$ and $0\leq m\leq (q-1)l$, we have   
\begin{equation}\label{inequality:diophantine}
    \left\lvert k^{p/q}- \frac{a}{k^{m+l}} \right\rvert \gg_{\epsilon,k,p,q} k^{-(1+\epsilon) (m+l)},
\end{equation}
where the implicit constant is ineffective. By applying \eqref{inequality:diophantine}, we will show that \eqref{errors-of-sawtooth} is small enough. Therefore, combining \eqref{equation:sketch1} and  \eqref{equation:sketch2} presents
\begin{align*}
    &\sum_{0\leq m\leq l} \{k^{m+p/q}\}= \frac{ql}{2} -\frac{1}{2\pi i }\sum_{0<|n|\leq \frac{\log k}{2\pi }T} \zeta\left(\frac{2n\pi i}{\log k}\right) \frac{e^{2n\pi i p/q}}{n} -\sum_{0\leq m\leq (q-1)l} \{k^{m+l+p/q} \}+(\text{errors}),
\end{align*}
which completes 
\[
A(ql) = \frac{ql}{2}-\frac{1}{2\pi i }\sum_{0<|n|\leq \frac{\log k}{2\pi }T} \zeta\left(\frac{2n\pi i}{\log k}\right) \frac{e^{2n\pi i p/q}}{n} +(\text{errors}).
\]
Interestingly, we discover a relation between  
\[
\sum_{0\leq m\leq l} \{k^{m+p/q}\} \quad \text{and}\quad  -\sum_{0\leq m\leq (q-1)l} \{k^{m+l+p/q} \}
\]
through the functional equation $\zeta(qs)= \chi(qs) \zeta (1-qs)$, one of the key ingredients of the proof. 

We organize the remainder of the article as follows. In Section \ref{section:perron}, we apply Perron's formula and calculate the residues of $\Phi(s)$. In Section~\ref{section:evaluation-RandS}, we move the vertical integral from $\int_{c-iT}^{c+iT}$ to $\int_{\sigma-iT}^{\sigma+iT}$ for some fixed $\sigma<0$ and provide \eqref{equation:sketch1}. Section~\ref{section:evaluation-S} shows \eqref{equation:sketch2} using the functional equation, and Lemmas~\ref{lemma:exponential-sums} to \ref{lemma:stationary-phase}. At last, in Section~\ref{section:completion}, we complete the proof of Theorem~\ref{theorem:generalization}.

\section{Applying Perron's formula and the residue theorem}\label{section:perron}
From this section, we also consider the case $q=1$. Recall that we fix arbitrary integers $k\in \mathbb{Z}_{\ge 2}$, $p$, and $q$ with $\textup{gcd}(p,q)=1$ and $1\leq p\leq q$.  Assume that $k$ is not a $q$-th power of integers if $q\geq 2$. Let $x\geq 2$ and $T\geq 2$ be sufficiently large parameters. We will choose $x=k^{ql+p}$ and $T\asymp lk^{ql}$. Let $c>1/q$ be a parameter that depends on $x$. We will choose $c=1+1/\log x$ later. By Lemma~\ref{Lemma:Perrons-formula} (Perron's formula) and the definition of $h(n)$, we obtain  
\begin{equation}\label{equation:Perron-h}
    \sum_{n\leq x}{}^\prime h(n) = \frac{1}{2\pi i} \int_{c-iT}^{c+iT} \varphi(k,qs) \eta(qs) \frac{x^s}{s} ds+R,
\end{equation}
where
\begin{equation}\label{inequality:R}
    R\ll  \sum_{\substack{x/2<n<2x\\ n\not=x}}|h(n)|\min\left(\frac{x}{T|x-n|},1\right) +\frac{4^{c}+x^{c}}{T}\sum_{n=1}^\infty\frac{|h(n)|}{n^c}.
\end{equation}
To transfer the vertical line of the integral, we should investigate the poles and residues of $\varphi(k, qs) \eta(qs) x^s/s$. Recall that $\Phi(s)=\Phi(s;x)=\varphi(k,qs) \eta(qs) x^s/s$.

\begin{lemma}
Let $s_n= 2n \pi i / (q\log k)$ for every $n\in \Z$. The function $\Phi(s)$ has a pole at $s=s_n$ for every $n\in \Z$. In addition, 
for every $n\in \Z $, the residue of $\Phi(s)$ at $s=s_n$ is  
\begin{equation}
    \left\{
    \begin{aligned}
        \frac{1}{q\log k}\cdot \frac{\eta(qs_n)}{s_n}x^{s_n}  & \quad\textup{if } n \neq 0, \\
        (k-1)\frac{\log x}{2q\log k} +O(1) &\quad \textup{if } n =0.
    \end{aligned}
    \right.
\end{equation}
\end{lemma}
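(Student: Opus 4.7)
The plan is mainly a computation. First I would pin down the singularities of $\Phi(s) = \varphi(k,qs)\eta(qs)\,x^s/s$. The factor $\eta(qs) = (1-k^{1-qs})\zeta(qs)$ is entire, since the only pole of $\zeta$ is simple at $qs = 1$ and is cancelled by the zero of $1-k^{1-qs}$ there. On the other hand, $\varphi(k, qs) = 1/(1-k^{-qs})$ is meromorphic with simple poles exactly at the zeros of $1-k^{-qs}$, namely at $s_n = 2n\pi i/(q\log k)$ for $n \in \Z$; the zeros are simple because $\frac{d}{ds}(1-k^{-qs})|_{s=s_n} = q\log k\cdot k^{-qs_n} = q\log k \neq 0$. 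Hence $\Phi$ is meromorphic on $\C$ with a simple pole at each $s_n$ with $n \neq 0$, and a double pole at $s_0 = 0$ where the zero of $1-k^{-qs}$ and the pole of $1/s$ meet.

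For $n \neq 0$ the residue is immediate from the simple-pole formula: using the Taylor expansion $1 - k^{-qs} = q\log k\,(s - s_n) + O((s - s_n)^2)$ together with the fact that $\eta(qs)$, $x^s$, and $1/s$ are holomorphic at $s_n$, I obtain
\[
\mathrm{Res}_{s = s_n}\Phi(s) = \lim_{s\to s_n}\frac{s - s_n}{1 - k^{-qs}}\cdot \eta(qs)\cdot\frac{x^s}{s} = \frac{1}{q\log k}\cdot \frac{\eta(qs_n)}{s_n}\,x^{s_n},
\]
which is exactly the claimed expression.

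For $n = 0$ I would compute a short Laurent expansion of each factor about $s = 0$. From $1 - k^{-qs} = q\log k\,s - \tfrac{1}{2}(q\log k)^2 s^2 + O(s^3)$ one gets $\tfrac{1}{1 - k^{-qs}} = \tfrac{1}{q\log k\,s} + \tfrac{1}{2} + O(s)$; from $\zeta(0) = -1/2$ one has $\eta(qs) = \tfrac{k - 1}{2} + O(s)$; and $\tfrac{x^s}{s} = \tfrac{1}{s} + \log x + O(s)$. Multiplying the three expansions and reading off the coefficient of $1/s$, the only $\log x$-dependent contribution is $\tfrac{k-1}{2q\log k}\log x$, while every other surviving term of order $1/s$ is a constant depending only on $k$ and $q$; together these produce $(k-1)\log x/(2q\log k) + O(1)$, matching the lemma.

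No genuine obstacle arises. The only step that requires any vigilance is verifying that $\eta(qs)$ has no hidden singularity (in particular at $qs = 1$), so that the list of poles above is truly complete. I would also note that the $\log x$ appearing in the double-pole residue is the source of the eventual main term $l/2$ in Theorem~\ref{theorem:generalization} once $x = k^{ql+p}$ is substituted and the residue theorem is applied to shift the contour across $s = 0$.
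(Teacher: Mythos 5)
Your proposal is correct and follows essentially the same route as the paper: identify the simple pole of $\varphi(k,qs)$ at each $s_n$ via the Laurent expansion of $1/(1-k^{-qs})$, read off the residue for $n\neq 0$ from the simple-pole formula, and for $n=0$ multiply the short Laurent series of the three factors and extract the $1/s$-coefficient, using $\eta(0)=(k-1)/2$. The extra observation that $\eta(qs)$ is entire (the pole of $\zeta$ at $qs=1$ being cancelled) is correct and makes the catalogue of poles explicit, though it is left implicit in the paper.
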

\begin{proof} We observe that
\[
\varphi(k,qs)=\sum_{n=0}^\infty \frac{1}{k^{qs}} =\frac{1}{1-k^{-qs}}.
\]
Thus, the function $\varphi(k^q,s)$ has a simple pole at $s=s_n$ for every $n\in \Z$. For every $0<|s-s_n|<\epsilon$, we have 
\begin{align*}
    \varphi(k, qs)
    &=\frac{1}{1-k^{-q(s-s_n)}}\\
    &=\frac{1}{q(\log k)(s-s_n)}\cdot\frac{1}{1-\frac{q\log k}{2}(s-s_n)+O(|s-s_n|^2)}\\
    &=\frac{1}{q(\log k)(s-s_n)}\left(1+\frac{q\log k}{2}(s-s_n)+O(|s-s_n|^2)\right)\\
    &=\frac{1}{q(\log k)(s-s_n)}+\frac{1}{2}+O(|s-s_n|),
\end{align*}
which implies that the residue of $\varphi(k,qs)$ at $s=s_n$ is equal to $1/(q\log k)$. Therefore, for every $n\in \Z \setminus \{0\}$, the residue of $\Phi(s)$ at $s=s_n$ is equal to 
\begin{align*}
    \frac{1}{q\log k}\cdot \frac{\eta(qs_n)}{s_n}x^{s_n}.
\end{align*}
When $n=0$, for every $0<|s|<\epsilon$, we have
\begin{align*}
    \Phi(s)
    &=\left(\frac{1}{q(\log k)s}+\frac{1}{2}+O(|s|)\right)(\eta(0)+q\eta^\prime(0)s+O(|s|^2))\left(\frac{1}{s}+\log x+O(|s|)\right)\\
    &=\frac{1}{q\log k}\frac{1}{s^2} +\left(\frac{\eta(0)\log x+q\eta^\prime(0)}{q\log k}+\frac{\eta(0)}{2} \right) \frac{1}{s} +O(1).
\end{align*}
Since $\eta(0)=(1-k)\zeta(0)=(k-1)/2$, the residue of $\Phi(s)$ at $s=s_0(=0)$ is 
\[
(k-1)\frac{\log x} {2q\log k}+O(1).
\]
\end{proof}

Let $\sigma_0$ be a negative constant depending only on $q$. We will choose $\sigma_0=-1/(2q)$ later. Let $\delta$ be a sufficiently small absolute constant belonging to $(0,1/2)$, and we define
\[
\mathcal{T}=\mathcal{T}_{\delta}:=\bigsqcup_{n=0}^\infty\left(\frac{2n\pi}{q\log k}-\delta,\frac{2n\pi}{q\log k}+\delta\right).
\] 
To avoid the poles of $\Phi(s)$, if necessary, we assume that $T\in [2,\infty)\setminus \mathcal{T}$. Then, by applying the residue theorem and \eqref{equation:Perron-h}, we obtain 
\begin{equation}\label{equation:PerronResidue}
    \sum_{n\leq x}{}^\prime h(n)=(k-1)\frac{\log x}{2q\log k}+\frac{1}{q\log k}\sum_{0<|s_n|\leq T}\frac{\eta(qs_n)}{s_n}x^{s_n}+R+S_0+S_1+O(1),
\end{equation}
where
\begin{align*}
    S_0\coloneqq \frac{1}{2\pi i}\int_{\sigma_0-iT}^{\sigma_0+iT}\Phi(s)ds,\quad S_1\coloneqq \frac{1}{2\pi i}\left(\int_{\sigma_0+iT}^{c+iT}-\int_{\sigma_0-iT}^{c-iT}\right)\Phi(s) ds.
\end{align*}

\section{Evaluation of upper bounds for $R$ and $S$}\label{section:evaluation-RandS}

\begin{lemma}\label{lemma:evaluation-of-h}
For every $n\in \mathbb{N}$ and real number $c>1/q$, we have
\begin{gather} \label{inequality:h1}
    |h(n)|\leq (k-1)\left(q^{-1}\log_k n+1\right),\\ \label{inequality:h2}
    \sum_{n=1}^\infty \frac{|h(n)|}{n^{c}} \leq (k-1)\varphi(k,cq) \zeta(cq).
\end{gather}
\end{lemma}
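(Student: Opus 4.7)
The plan is to exploit the Dirichlet convolution structure $h = f \ast g$ given in \eqref{equation:Dirichlet-series} and establish the two inequalities independently.

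For the pointwise bound \eqref{inequality:h1}, I would first use that $f(d) = 1$ exactly when $d$ is a nonnegative power of $k^{q}$ to rewrite
\[
h(n) = \sum_{\substack{d\geq 0 \\ k^{qd}\mid n}} g(n/k^{qd}).
\]
Each nonzero summand satisfies $|g(n/k^{qd})| = |b(j)| \leq k-1$, since $b$ takes values in $\{1,\,1-k\}$. The set of indices $d \geq 0$ with $k^{qd}\mid n$ is contained in $\{d\geq 0 : k^{qd}\leq n\}$, which has at most $\lfloor q^{-1}\log_k n\rfloor + 1$ elements. Multiplying the pointwise bound by the count yields \eqref{inequality:h1}.

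For the series bound \eqref{inequality:h2}, I would apply the triangle inequality term by term and then factor the resulting nonnegative sum as an absolutely convergent Dirichlet product:
\[
\sum_{n=1}^\infty \frac{|h(n)|}{n^c} \leq \sum_{n=1}^\infty \frac{1}{n^c}\sum_{d\mid n} |f(d)|\,|g(n/d)| = \left(\sum_{d=1}^\infty \frac{|f(d)|}{d^c}\right)\left(\sum_{m=1}^\infty \frac{|g(m)|}{m^c}\right).
\]
The first factor is $\varphi(k,qc)$ directly from the definitions of $\varphi$ and $f$. For the second, since $g$ is supported on $q$-th powers with $|g(j^q)|=|b(j)|\leq k-1$, I would compute
\[
\sum_{m=1}^\infty \frac{|g(m)|}{m^c} = \sum_{j=1}^\infty \frac{|b(j)|}{j^{qc}} \leq (k-1)\,\zeta(qc).
\]
Multiplying the two factors gives \eqref{inequality:h2}. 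Absolute convergence of each series, which legitimizes the rearrangement, follows from the hypothesis $c > 1/q$ (so that $qc > 1$).

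No real obstacle is expected: the entire argument reduces to the uniform estimate $|b(\cdot)|\leq k-1$, which is immediate from the definition of $b$, together with the elementary count of nonnegative powers of $k^{q}$ dividing $n$.
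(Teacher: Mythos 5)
Your proof is correct and follows the same route as the paper's: for \eqref{inequality:h1}, bound each summand in the convolution $h=f\ast g$ by $k-1$ and count divisors of the form $k^{qd}$; for \eqref{inequality:h2}, apply the triangle inequality and factor the absolutely convergent Dirichlet series. The only difference is that you spell out the intermediate identifications of the two factors as $\varphi(k,qc)$ and $(k-1)\zeta(qc)$ slightly more explicitly than the paper does, but this is the same argument.
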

\begin{proof}
We have \eqref{inequality:h1} immediately since the definition of $h(n)$ implies 
\[
|h(n)|\leq\sum_{\substack{d\geq0\\ k^{qd}\mid n}}(k-1)\leq \sum_{0\leq d\leq q^{-1}\log_k n}(k-1)\leq(k-1)\left(q^{-1}\log_k n+1\right).
\]
We also obtain \eqref{inequality:h2} easily since for every real number $c>1/q$, 
\[
\sum_{n=1}^\infty \frac{|h(n)|}{n^{c}} = \sum_{n=1}^\infty \frac{1}{n^c} \left\lvert \sum_{d \mid n  } f(d)g(n/d)\right\rvert \leq \left( \sum_{n=0}^\infty \frac{1}{k^{qnc}}\right)\left(\sum_{n=1}^\infty \frac{k-1}{n^{qs}}\right).
\]
\end{proof}

\begin{lemma}\label{lemma:Evaluation-R}
Let $x=k^{ql+p}$. We have 
\[
R\ll \frac{x\log x}{T}+\frac{x^c}{(cq-1)T}
\]
uniformly in $l\in\N$, $T\geq2$, $1/q<c<2$.
\end{lemma}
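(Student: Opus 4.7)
The proof estimates the two summands of~\eqref{inequality:R} separately. For the tail summand, I apply~\eqref{inequality:h2} of Lemma~\ref{lemma:evaluation-of-h}: $\sum_{n=1}^\infty |h(n)|/n^c \leq (k-1)\varphi(k,cq)\zeta(cq)$. Since $cq>1$, the Euler factor $\varphi(k,cq)=(1-k^{-cq})^{-1}$ is $O_k(1)$, and the Laurent expansion of $\zeta$ at $s=1$ gives $\zeta(cq)\ll 1/(cq-1)$ uniformly for $1<cq<4$. Since $c<2$ we have $4^c=O(1)\ll x^c$ (valid as $x\geq 2$ forces $x^c\geq 1$). This produces the $x^c/((cq-1)T)$ contribution.

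The diagonal summand $\sum_{x/2<n<2x,\,n\neq x}|h(n)|\min(x/(T|x-n|),1)$ is the delicate part. A naive bound $|h(n)|\ll\log x$ from~\eqref{inequality:h1} combined with the standard min-sum estimate $\sum_{m=1}^{\lfloor x/2\rfloor}\min(x/(Tm),1)\ll x\log x/T$ yields only $x(\log x)^2/T$; the main obstacle is to recover the missing logarithm. To do so, I would use the sharper pointwise bound $|h(n)|\leq (k-1)\,\#\{d\geq 0:k^{qd}\mid n\}$ that appears inside the proof of~\eqref{inequality:h1} and interchange summation:
\[
\sum_{\substack{x/2<n<2x\\ n\neq x}}|h(n)|\min\!\left(\frac{x}{T|x-n|},1\right)
\ll \sum_{d\geq 0}\ \sum_{\substack{x/2<n<2x\\ k^{qd}\mid n,\,n\neq x}}\min\!\left(\frac{x}{T|x-n|},1\right).
\]
Parametrising the $d=0$ layer by $m=|x-n|$ gives $\leq 2\sum_{m=1}^{\lfloor x/2\rfloor}\min(x/(Tm),1)\ll x\log x/T$, by the classical split at $m=\lfloor x/T\rfloor$ (for $T\leq x$) and direct summation (for $T>x$). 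For $d\geq 1$ with $k^{qd}\mid x$ (covering $d\leq l$, since $x=k^{ql+p}$), substituting $n=k^{qd}n'$ and $y=x/k^{qd}$ reduces the inner sum to a min-sum on scale $y$, bounded by $\ll (x/k^{qd})\log x/T$. Residual $d\geq l+1$ contribute nothing: either $k^{qd}\geq 2x$, placing the multiples of $k^{qd}$ outside $(x/2,2x)$, or in the edge case $d=l+1$ with $p=q=1$ the sole multiple is $x$ itself, excluded by $n\neq x$. Summing the resulting geometric series in $d$ yields $\ll x\log x/T$, completing the proof.
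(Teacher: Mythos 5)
Your proposal is correct and is essentially the paper's own argument, just organised a bit more compactly. You bound $R_2$ with \eqref{inequality:h2} and $\zeta(cq)\ll 1/(cq-1)$ exactly as the paper does. For $R_1$, both proofs rest on the same two ingredients: the divisor-layer bound $|h(n)|\leq(k-1)\#\{d\ge 0:k^{qd}\mid n\}$ together with an interchange of summation, and the crucial integrality $x/k^{qd}=k^{(l-d)q+p}\in\Z$ for the relevant $d$, which guarantees $|x-n|\geq k^{qd}$ on each layer. The paper realises this by splitting $R_1$ into the three pieces $R_{11},R_{12},R_{13}$ at the thresholds $U^{\pm}=x(1\pm T^{-1})$, stripping off the $\min$ on the outer pieces and counting the inner piece crudely; you instead keep the $\min$ intact and invoke the classical min-sum bound $\sum_{m\le M}\min(x/(Tm),1)\ll x\log x/T$ on each $d$-layer after rescaling, which internalises the same threshold split. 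Your handling of the residual layers $d\ge l+1$ (either $k^{qd}\ge 2x$, or, when $p=q=1$ and $d=l+1$, the only multiple is $n=x$ which is excluded) is correct and matches the role of the paper's remark that $x/k^{qd}\in\Z$ for $0\le d\le l$. The only cosmetic slip is labelling the layers with $k^{qd}\mid x$ as exactly $d\le l$; when $p=q=1$ this also includes $d=l+1$, but you cover that case anyway in the residual discussion, so nothing is missing.
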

\begin{proof} Let 
\begin{align*}
    R_1\coloneqq\sum_{\substack{x/2<n<2x \\ n\not=x}}|h(n)|\min\left(\frac{x}{T|x-n|},1\right),\quad R_2:=\frac{x^c}{T}\sum_{n=1}^\infty\frac{|h(n)|}{n^c}.
\end{align*}
Then, by \eqref{inequality:R}, we have $R\ll R_1+R_2$. We first evaluate the upper bounds for $R_2$. Lemma~\ref{lemma:evaluation-of-h} leads to
\[
R_2\leq (k-1)\frac{x^c}{T}\varphi(k,cq)\zeta(cq)\ll\frac{x^c}{(cq-1)T}.
\]
For evaluating upper bounds for $R_1$, we see that 
\[
\frac{1}{T|1-n/x |}=1\iff n=x (1\pm T^{-1})=:U^{\pm}.
\]
Thus, by setting 
\[
R_{11}= \sum_{x/2<n\leq U^{-}}\left\lvert \frac{x h(n)}{x-n}\right\rvert, \quad 
R_{12}=\sum_{U^{+}\leq n<2x}\left\lvert \frac{x h(n)}{x-n}\right\rvert, \quad
R_{13}= \sum_{\substack{U^{-}<n<U^{+};\\ n\not=x}}|h(n)|,
\]
we have $R_1=(R_{11}+R_{12})/T +R_{13}$.  By the definition of $h$ and $x=k^{ql+p}\in\Z$, we obtain 
\begin{align*}
    R_{11}
    &=x\sum_{x/2<n\leq U^{-}}\frac{|h(n)|}{x- n}\ll x\sum_{1\leq n\leq x-1}\frac{1}{x- n}\sum_{\substack{d\geq0\\ k^{qd}\mid n}}1
    =x\sum_{\substack{d\geq0,m\geq1\\ 1\leq mk^{qd}\leq x-1}}\frac{1}{x-mk^{qd}}\\
    &=x\sum_{0\leq d\leq q^{-1}\log_k(x-1)}\frac{1}{k^{qd}} \sum_{1\leq m\leq\frac{x-1}{k^{qd}}}\frac{1}{x/k^{qd}- m}\\
    &=x\sum_{0\leq d\leq q^{-1}\log_k(x-1)}\frac{1}{k^{qd}}\sum_{1\leq m\leq \left\lfloor \frac{x}{k^{qd}} \right\rfloor-1}\frac{1}{x/k^{qd}- m},
\end{align*}
where we apply the propertiy $x/k^{qd}= k^{(l-d)q+p} \in \Z$ for every $0\leq d\leq l$ at the last equation. Therefore, we have
\[
R_{11}\ll x\sum_{0\leq d\leq q^{-1}\log_k(x-1)}\frac{\log (x/k^{qd})}{k^{qd}}  \ll x\log x.
\]
Similarly, we also obtain 
\begin{align*}
    R_{12}
    &\leq x \sum_{x+1\leq n<2x}\frac{1}{n- x}\sum_{\substack{d\geq0\\k^{qd}\mid n}}1
    =x\sum_{\substack{q^{-1}\log_k(x+1)\leq d\leq q^{-1}\log_k(2x)}}\frac{1}{k^{qd}} \sum_{x+1\leq mk^{qd}<2x}\frac{1}{m- x/k^{qd}}\\
    &\ll  x\sum_{\substack{q^{-1}\log_k(x+1)\leq d\leq q^{-1}\log_k(2x)}}\frac{\log(x/k^{qd}) }{k^{qd}} \ll x\log x.
\end{align*}
Furthermore, Lemma \ref{lemma:evaluation-of-h} implies  
\[
R_{13}=\sum_{\substack{U^{-}<n<U^{+}\\ n\not=x}}|h(n)|\leq(k-1)(q^{-1}\log_k x+1)\sum_{\substack{U^{-}<n<U^{+}\\ n\not=x}}1.
\]
Recall that $x\in \Z$; hence if $U^{+}-U^{-}<1$, the above sum on the most right-hand side is $0$. Therefore, we have
\[
\sum_{\substack{U^{-}<n<U^{+}\\ n\not=x}}1\leq U^{+}-U^{-}=x((1+T^{-1})-(1-T^{-1}))\ll\frac{x}{T}.
\]
Combining the upper bounds for $R_{11}, R_{12}, R_{13}$, we have $R_{1}\ll (x\log x)/T$, and hence
\[
R\ll R_1+R_2\ll \frac{x\log x}{T}+\frac{x^c}{(cq-1)T}.
\]
\end{proof}

\begin{corollary}\label{corollary:evaluation-R}
By choosing $c=1+1/\log x$ and $T\gg x\log x$,  we have $R\ll 1$.
\end{corollary}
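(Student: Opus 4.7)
The plan is to simply substitute the proposed values of $c$ and $T$ into the bound from Lemma~\ref{lemma:Evaluation-R} and check that both terms collapse to $O(1)$. First I would note the key identity $x^{1/\log x} = e$, so that with $c = 1 + 1/\log x$ one has
\[
x^{c} = x \cdot x^{1/\log x} = e\,x \ll x.
\]

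Next I would bound the factor $1/(cq-1)$. When $q=1$, we have $cq-1 = 1/\log x$, so $1/(cq-1) = \log x$; when $q \geq 2$, we have $cq - 1 \geq q - 1 \geq 1$, so $1/(cq-1) \ll 1$. In either case $1/(cq-1) \ll \log x$, and since we are treating $q$ as a constant this dependence is absorbed into the implicit constant. Plugging these estimates into Lemma~\ref{lemma:Evaluation-R} gives
\[
R \ll \frac{x\log x}{T} + \frac{x^{c}}{(cq-1)T} \ll \frac{x\log x}{T}.
\]

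Finally, the hypothesis $T \gg x \log x$ yields $R \ll 1$, which is the claim. There is no real obstacle here: the corollary is essentially a bookkeeping consequence of Lemma~\ref{lemma:Evaluation-R} together with the observation $x^{1/\log x} = e$; the only mild point to notice is the case split on $q$ needed to bound $1/(cq-1)$, which is harmless because $q$ is fixed.
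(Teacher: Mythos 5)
Your proof is correct, and it is essentially the only reasonable way to derive the corollary from Lemma~\ref{lemma:Evaluation-R}: substitute $c=1+1/\log x$ to get $x^c=ex\ll x$, observe that $1/(cq-1)\ll\log x$ (with the case split on $q=1$ versus $q\geq2$), and use $T\gg x\log x$ to collapse both terms to $O(1)$. The paper does not write out a proof of this corollary, but this is clearly the intended bookkeeping; the only thing you might add for completeness is a remark that $c=1+1/\log x$ lies in the required window $1/q<c<2$ once $x$ is large enough, which is guaranteed since $x=k^{ql+p}\geq 4>e$.
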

Let us next give an upper bound for $S_1$. 

\begin{lemma}\label{lemma:convexity}
For every $t\geq t_0>0$ uniformly in $\sigma\in \mathbb{R}$,  
\begin{align*}
    \zeta(\sigma+it)\ll
    \begin{cases}
        1 & (\sigma\geq 2), \\
	\log t  &  (1\leq \sigma\leq 2),\\
	t^{\frac{1}{2}(1-\sigma)}\log t  &  (0\leq\sigma\leq 1 ),\\
	t^{\frac{1}{2}-\sigma}\log t  &  (\sigma\leq0).
    \end{cases}
\end{align*}
\end{lemma}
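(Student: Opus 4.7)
The plan is to treat the four regimes separately, moving from simplest to most delicate. For $\sigma \geq 2$, the trivial bound $|\zeta(\sigma+it)| \leq \zeta(2)$ from absolute convergence of $\sum n^{-s}$ disposes of that case.

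For $1 \leq \sigma \leq 2$, I would use the Euler--Maclaurin representation
\[
\zeta(s) = \sum_{n=1}^N \frac{1}{n^s} + \frac{N^{1-s}}{s-1} - s\int_N^\infty \frac{\{u\}}{u^{s+1}}\,du,
\]
valid for $\Re(s) > 0$, $s \neq 1$, and integer $N \geq 1$. Choosing $N = \lfloor t \rfloor$ makes the sum $O(\log t)$, the middle term $O(1)$ (using $|s-1| \asymp t$), and the integral $O(|s|/(\sigma N^{\sigma})) = O(1)$, yielding the claimed $\log t$ bound.

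For $\sigma \leq 0$, the plan is to invoke the functional equation $\zeta(s) = \chi(s)\zeta(1-s)$. Stirling's formula, applied after rewriting $\chi$ so that its $\Gamma$-factors have argument of real part at least $1$ (for instance via $\chi(s) = \pi^{s-1/2}\Gamma((1-s)/2)/\Gamma(s/2)$ together with reflection on the denominator), yields $|\chi(\sigma+it)| \ll t^{1/2-\sigma}$ for $t \geq t_0$. Since $\Re(1-s) \geq 1$, the previous two regimes bound $|\zeta(1-s)|$ by $\log t$ (for $-1 \leq \sigma \leq 0$) or by $1$ (for $\sigma \leq -1$), and multiplying yields the assertion. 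For the critical strip $0 \leq \sigma \leq 1$, I would apply the Phragm\'en--Lindel\"of principle on a slightly enlarged strip $-\delta \leq \Re(s) \leq 1+\delta$ to the auxiliary function $g(s) = (s-1)\zeta(s)/\log(|\Im s|+2)$. By the prior cases $g$ is $O(t)$ on the right edge and $O(t^{3/2+\delta})$ on the left, with polynomial growth throughout; the convexity principle will then interpolate the exponent linearly in $\sigma$, and letting $\delta \to 0$ will give $\zeta(\sigma+it) \ll t^{(1-\sigma)/2}\log t$.

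The main obstacle will be maintaining uniformity in $\sigma$ during the $\sigma \leq 0$ step as $|\sigma|$ grows: the bound $|\chi(\sigma+it)| \ll t^{1/2-\sigma}$ is delicate once $|\sigma|$ is comparable to or exceeds $t$, since Stirling's formula introduces additional $\sigma$-dependence through factors like $|\sigma|^{|\sigma|}$ that are not absorbed by $t^{-\sigma}$. I plan to handle this by restricting uniformity to any fixed bounded-below interval of $\sigma$, which will suffice for every invocation of the lemma in the sequel; once that is arranged, the four pieces will assemble into the stated bound.
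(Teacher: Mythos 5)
The paper's own ``proof'' is a one-line citation to Ivi\'c's Theorem~1.9, so your self-contained argument is by necessity a different route. The four-case plan you lay out --- trivial bound for $\sigma\geq 2$, truncated Euler--Maclaurin with $N=\lfloor t\rfloor$ for $1\leq\sigma\leq 2$, functional equation plus Stirling for $\sigma\leq 0$, and Phragm\'en--Lindel\"of interpolation across the critical strip --- is the standard textbook derivation of these convexity bounds and is sound in outline. You also correctly flag that the stated uniformity ``in $\sigma\in\mathbb{R}$'' cannot hold literally: for $t$ fixed and $\sigma\to-\infty$, Stirling gives $|\chi(\sigma+it)|$ growing like $(|\sigma|/(2\pi e))^{|\sigma|}$ up to polynomial factors, which eventually dwarfs $t^{1/2-\sigma}$; the implied constant must therefore depend on a lower bound for $\sigma$. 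This restriction is harmless for the paper, since Lemma~5.3 only applies the bound on the bounded segment $\sigma\in[q\sigma_0,qc]\subset[-1/2,\,2+o(1)]$.

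There is, however, one genuine flaw in your Phragm\'en--Lindel\"of step: the auxiliary function $g(s)=(s-1)\zeta(s)/\log(|\Im s|+2)$ is not analytic, because $|\Im s|$ is not an analytic function of $s$, so the convexity principle cannot be applied to $g$ as written. The repair is routine but must be made: replace the divisor by an analytic surrogate of comparable size on the strip, for instance $\log(2-is)$ with the principal branch, which satisfies $|\log(2-is)|\asymp\log(t+2)$ uniformly for $s$ in any fixed vertical strip with $t\geq t_0$; alternatively, apply the standard logarithm-refined form of the Lindel\"of convexity theorem directly to $(s-1)\zeta(s)$. Either fix leaves the rest of your interpolation (exponent $(3-\sigma)/2$ for $(s-1)\zeta(s)$ after letting $\delta\to 0$, hence $(1-\sigma)/2$ for $\zeta$) intact.
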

\begin{proof}
See \cite[Theorem~1.9]{Ivic}.
\end{proof}

\begin{lemma}\label{lemma:evaluation-S}
    Let  $\sigma_0$ be a constant depending only on $q$ satisfying $-\frac{1}{2(q-1)}<\sigma_0 <0$, where we define $-\frac{1}{2(q-1)}=-\infty$ if $q=1$. Let $x\geq 2$,  $T\in [2,\infty)\setminus\mathcal{T}_\delta$, and $c=1+1/\log x$. Assume that $T\asymp x\log x$.  Then,  we have $S_1\ll 1$ uniformly in such $x$ and $T$.
\end{lemma}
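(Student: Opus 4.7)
The plan is to parametrise each of the two horizontal contours as $s=\sigma\pm iT$ with $\sigma\in[\sigma_0,c]$ and bound the integrand $|\Phi(\sigma\pm iT)|$ uniformly in $\sigma$. Since $|\sigma\pm iT|\geq T$ and $\sigma$ is bounded, $|x^s/s|\leq x^\sigma/T$. For the prefactor $(1-k^{1-qs})/(1-k^{-qs})$, the numerator is clearly $O(1)$ on the strip $\sigma_0\leq\sigma\leq c$, while the denominator vanishes only at the poles $s_n=2n\pi i/(q\log k)$ of $\varphi(k,qs)$; the hypothesis $T\notin\mathcal{T}_\delta$ forces $|T-2n\pi/(q\log k)|\geq\delta$ for every $n\geq 0$, and combined with the $2\pi i/(q\log k)$-periodicity of $1-k^{-qs}$ in $s$ this yields a uniform lower bound $|1-k^{-qs}|\gg_{\delta,k,q}1$ on both horizontal lines. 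Consequently
\[
|\Phi(\sigma\pm iT)|\ll \frac{x^\sigma\,|\zeta(q\sigma\pm iqT)|}{T}.
\]

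Next I would apply Lemma~\ref{lemma:convexity} to $\zeta(q\sigma\pm iqT)$ and split $[\sigma_0,c]$ into the three subintervals $[\sigma_0,0]$, $[0,1/q]$, $[1/q,c]$ matching the three regimes of the convexity bound; since $c=1+1/\log x<2$ for large $x$, the fourth regime never arises. On $[1/q,c]$ one has $|\zeta|\ll\log T$, and the integral is bounded by $\tfrac{\log T}{T}\cdot\tfrac{x^c-x^{1/q}}{\log x}\ll\tfrac{x^c\log T}{T\log x}\ll\tfrac{1}{\log x}$, using $x^c\ll x$ and $T\asymp x\log x$. On $[0,1/q]$ the bound $|\zeta|\ll T^{(1-q\sigma)/2}\log T$ produces an integrand $\log T\cdot x^\sigma T^{-(1+q\sigma)/2}$; this I would evaluate as an explicit geometric integral in $\sigma$ with base $xT^{-q/2}$, after which substituting $T\asymp x\log x$ gives $O(1)$ separately in each of the cases $q=1$, $q=2$, and $q\geq 3$.

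The main obstacle is the leftmost piece $[\sigma_0,0]$, where the convexity bound is weakest: $|\zeta|\ll T^{1/2-q\sigma}\log T$ gives the integrand $\log T\cdot x^\sigma T^{-1/2-q\sigma}$. Using $x\asymp T/\log T$ rewrites this as a function proportional to $T^{(1-q)\sigma-1/2}(\log T)^{1-\sigma}$. The hypothesis $\sigma_0>-1/(2(q-1))$ (interpreted as no lower bound when $q=1$) is exactly the condition that keeps the exponent $(1-q)\sigma-1/2$ strictly negative throughout $[\sigma_0,0]$; the integrand is then dominated by a negative power of $T$ times a logarithmic factor, hence is $o(1)$, and integrating over the bounded interval $[\sigma_0,0]$ preserves $o(1)$. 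Summing the three pieces for both horizontal segments yields $S_1\ll 1$, as claimed.
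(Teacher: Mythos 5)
Your proof is correct and follows essentially the same route as the paper: the same splitting of the horizontal segments into three pieces at $\sigma = 0$ and $\sigma = 1/q$, the same convexity bound (Lemma~\ref{lemma:convexity}), and the same observation that $\sigma_0 > -\frac{1}{2(q-1)}$ is precisely what keeps the exponent on $T$ (equivalently, on $x$) negative in the leftmost regime. The only cosmetic differences are that the paper integrates in the variable $q\sigma$ rather than $\sigma$ and applies the Schwarz reflection principle to collapse the two horizontal integrals into one, neither of which changes the substance.
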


\begin{proof}
By the definition of $S_1$, it follows that
\begin{align*}
    S_1
    &=\frac{1}{2\pi i}\int_{\sigma_0}^c\varphi(k,q(\sigma+iT))\eta(q(\sigma+iT))\frac{x^{\sigma+iT}}{\sigma+iT}d\sigma\\
    &-\frac{1}{2\pi i}\int_{\sigma_0}^c\varphi(k,q(\sigma-iT))\eta(q(\sigma-iT))\frac{x^{\sigma-iT}}{\sigma-iT}d\sigma.
\end{align*}
Since $\varphi(k,q(\sigma+iT)),\varphi(k,q(\sigma-iT))\ll1$ for every $(\sigma,T)\in[\sigma_0,c]\times\R\setminus\mathcal{T}$, the Schwarz reflection principle of $\zeta(s)$ and $\eta(s)=(1-k^{1-s})\zeta(s)$ imply
\begin{equation}\label{inequality-S1-1}
    S_1 \ll \frac{1}{T}\int_{q\sigma_0}^{qc}|\zeta(\sigma+iqT)|x^{\frac{\sigma}{q}}d\sigma.
\end{equation}
Further, we decompose the right-hand side of \eqref{inequality-S1-1} into three integrals as
\begin{align}\label{equation:S1-2}
    \frac{1}{T}\left(\int_{[q\sigma_0,0]}+ \int_{[0,1]}+\int_{[1,qc]}\right) |\zeta(\sigma+iqT)|x^{\frac{\sigma}{q}}d\sigma \eqqcolon S_{11}+S_{12}+S_{13}. 
\end{align}
Lemma~\ref{lemma:convexity} implies that 
\begin{gather*}
    S_{11} \ll \displaystyle{\int_{q\sigma_0}^{0} T^{-\sigma-\frac{1}{2}}x^{\frac{\sigma}{q}}\log T d\sigma}, \quad
    S_{12} \ll  \displaystyle{\int_{0}^{1}T^{-\frac{1}{2}-\frac{\sigma}{2} }x^{\frac{\sigma}{q}}\log T d\sigma}, \quad
    S_{13} \ll \displaystyle{\int_{1}^{qc} T^{-1}x^{\frac{\sigma}{q}} \log T d\sigma}.
\end{gather*}
For $S_{13}$, by the choices of $T$ and $c$, we have $S_{13} \ll T^{-1} x^{c} \log T\ll 1$. For $S_{11}$, by using $T\asymp x\log x$, we see that
\begin{align*}
    S_{11}
    &\ll T^{-\frac{1}{2}} \log T \int_{q\sigma_0}^0\left(x^{\frac{1}{q}}T^{-1}\right)^{\sigma} d\sigma
    \ll x^{-\frac{1}{2}}\log^{\frac{1}{2}}x\int_{q\sigma_0}^0\left(x^{1-\frac{1}{q}}\log x\right)^{-\sigma}d\sigma\\
    &\ll x^{-\frac{1}{2}}\log^{\frac{1}{2}}x\cdot \frac{x^{-(q-1)\sigma_0}\log^{- q\sigma_0}x}{\log x}
    =x^{-\frac{1}{2}-(q-1)\sigma_0}(\log x)^{-\frac{1}{2}-q\sigma_0}.
\end{align*}
The most right-hand side is $\ll1$ since $-\frac{1}{2(q-1)}<\sigma_0$. For $S_{12}$, in a similar manner, 
\[
S_{12} \ll x^{-\frac{1}{2}}(\log x)^{\frac{1}{2}} \int_{0}^1 \left( x^{\frac{1}{2}-\frac{1}{q}} \log^{-\frac{1}{2}} x  \right)^{-\sigma} d \sigma 
\]
In the case $q=1$, since $1/2-1/q=-1/2$,  we have 
\[
S_{12} \ll x^{-\frac{1}{2}} (\log x)^{\frac{1}{2}} \frac{x^{\frac{1}{2}} (\log x)^{\frac{1}{2}}}{\log x}\ll 1.
\]
In the case $q\geq 2$, since $1/2-1/q\geq 0$, we also obtain $S_{12}\ll x^{-\frac{1}{2}}\log x \ll 1$, completing the proof of Lemma~\ref{lemma:evaluation-S}.  
\end{proof}

\begin{proposition}\label{proposition:perronRandS}
Let $\sigma_0$ be as in Lemma~\ref{lemma:evaluation-S}. Then, we have  
\begin{align*}
    A(l)
    &=-\frac{1}{2\pi i }\sum_{0<|n|\leq \frac{q\log k}{2\pi} T} \zeta\left(\frac{2n\pi i}{\log k}\right) \frac{e^{2n\pi i p/q}}{n}  +S_{\sigma_0}(l,T)+O(1)+
    \begin{cases}
         0  &  \quad\textup{if } p=q=1,\\
        \frac{l}{2}  &  \quad\textup{otherwise}
    \end{cases}
\end{align*}
uniformly in $l\in \mathbb{N}$ and $T\geq 2$ with $T\asymp lk^{ql}$, where  
\[
S=S_{\sigma_0}(l,T)\coloneqq\frac{1}{2\pi i(k-1)}\int_{\sigma_0-iT}^{\sigma_0+iT}\eta(qs)\frac{k^{(ql+p)s}}{1-k^{-qs}}\frac{ds}{s}.
\]
\end{proposition}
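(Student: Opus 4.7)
The plan is to read off the proposition directly from equation \eqref{equation:PerronResidue} by fixing the parameters and invoking the two bounds just established. I would set $x = k^{ql+p}$, $c = 1 + 1/\log x$, and pick any $T \in [2,\infty) \setminus \mathcal{T}_\delta$ with $T \asymp lk^{ql}$. Since $lk^{ql} \asymp x\log x$, Corollary~\ref{corollary:evaluation-R} yields $R \ll 1$ and Lemma~\ref{lemma:evaluation-S} yields $S_1 \ll 1$, so both are absorbed into the $O(1)$ of the conclusion.

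Next I would identify each explicit term in \eqref{equation:PerronResidue} with an object appearing in the proposition. Substituting $\varphi(k,qs) = (1-k^{-qs})^{-1}$ and $x^s = k^{(ql+p)s}$ shows at once that $S_0 = (k-1)\,S_{\sigma_0}(l,T)$. At the pole $s_n = 2n\pi i/(q\log k)$ with $n \neq 0$, a short computation gives $x^{s_n} = e^{2n\pi i p/q}$, $qs_n = 2n\pi i/\log k$, $\eta(qs_n) = (1-k)\zeta(2n\pi i/\log k)$, and $1/s_n = q\log k/(2n\pi i)$, so the residue sum $\frac{1}{q\log k}\sum_{0<|s_n|\leq T}(\eta(qs_n)/s_n)\,x^{s_n}$ collapses to $-\frac{k-1}{2\pi i}\sum_{0<|n|\leq qT\log k/(2\pi)} \zeta(2n\pi i/\log k)\,e^{2n\pi i p/q}/n$. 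The explicit residue at $s=0$ contributes $(k-1)\log x/(2q\log k) = (k-1)(l/2 + p/(2q)) = (k-1)l/2 + O(1)$.

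The one genuinely delicate step is converting $\sum_{n \leq x}{}^\prime h(n)$ into $(k-1)A(l)$. Because $x = k^{ql+p} \in \mathbb{Z}$, Perron's convention gives $\sum_{n\leq x}{}^\prime h(n) = \sum_{n\leq x} h(n) - h(x)/2$, and \eqref{equation:numberof1} identifies the unprimed sum with $(k-1)A(l) + O(1)$. Now $h$ is supported on $q$-th powers (the factor $g(n/k^{qd})$ forces this), and because $\gcd(p,q)=1$ with $1 \leq p \leq q$, the integer $x = k^{ql+p}$ is itself a $q$-th power precisely when $p=q=1$. I expect this dichotomy to be the only care-demanding point of the proof.

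If $(p,q) \neq (1,1)$ then $h(x) = 0$, the full $(k-1)l/2$ from the residue at $s=0$ survives, and the $l/2$ appears in the conclusion. If instead $p=q=1$ then a direct count using $b(k^j) = 1-k$ for $j \geq 1$ and $b(1) = 1$ gives $h(k^{l+1}) = \sum_{j=0}^{l+1} b(k^{l+1-j}) = -(k-1)l + O(1)$, so $-h(x)/2 = (k-1)l/2 + O(1)$ exactly cancels the residue at $s=0$, producing the $0$ in the statement. Dividing through by $k-1$ in both cases then yields the proposition.
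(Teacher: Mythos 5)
Your proposal reproduces the paper's argument faithfully in its main steps: the identification $S_0=(k-1)S_{\sigma_0}(l,T)$, the collapse of the residue sum to $-\frac{k-1}{2\pi i}\sum_{0<|n|\le \frac{q\log k}{2\pi}T}\zeta(2n\pi i/\log k)\,e^{2n\pi ip/q}/n$, the $s=0$ residue giving $(k-1)l/2+O(1)$, and the case split on $h(k^{ql+p})$ driven by $x=k^{ql+p}$ being a $q$-th power exactly when $p=q=1$ (which, as you implicitly use, requires the standing hypothesis that $k$ is not a $q$-th power when $q\ge 2$). That part is correct and matches the paper.

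There is one genuine gap. You fix $T\in[2,\infty)\setminus\mathcal{T}_\delta$ at the outset, which is exactly the restriction under which \eqref{equation:PerronResidue} and Lemma~\ref{lemma:evaluation-S} apply. But the proposition asserts the formula uniformly for \emph{every} $T\ge 2$ with $T\asymp lk^{ql}$, with no exclusion of the short intervals $\mathcal{T}_\delta$ around the imaginary-axis poles of $\varphi(k,qs)$. Your proof therefore establishes the statement only for the good $T$'s and leaves the bad $T$'s uncovered. The missing step is: given $T\in\mathcal{T}_\delta$, replace it by some $T'\in[T,T+1]\setminus\mathcal{T}_\delta$, apply the proposition at $T'$, and then show that passing from $T'$ back to $T$ changes both the truncated residue sum (by $\ll T^{-1/2+\epsilon}$, from the convexity bound $\eta(qs_n)\ll|qs_n|^{1/2+\epsilon}$) and $S_{\sigma_0}(l,T)$ (by $\ll 1$, from $\zeta(q\sigma_0+iqt)\ll t^{1/2-q\sigma_0}\log t$ and $x^{\sigma_0}T^{-1/2-q\sigma_0}\ll 1$, which uses $\sigma_0>-\frac{1}{2(q-1)}$) by at most $O(1)$. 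The paper carries out exactly this verification at the end of its proof; without it, the claimed uniformity in $T$ is not justified.
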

\begin{proof}
By Collorary~\ref{corollary:evaluation-R} and Lemma~\ref{lemma:evaluation-S}, the equation \eqref{equation:PerronResidue} implies that
\begin{equation}\label{equation:PerronResidue2}
    \sum_{n\leq x}{}^\prime h(n)=(k-1)\frac{\log x}{2q\log k}+\frac{1}{q\log k}\sum_{0<|s_n|\leq T}\frac{\eta(qs_n)}{s_n}x^{s_n}+S_0+O(1)
\end{equation}
for $T\in [2,\infty)\setminus \mathcal{T}$ and $T\asymp x\log x$. By substituting $x=k^{ql+p}\in \mathbb{Z}$, the equation \eqref{equation:numberof1} leads to 
\[
\sum_{n\leq x}{}^\prime h(n)= \sum_{n\leq x}h(n) -\frac{h(k^{ql+p})}{2}+O(1)= (k-1)A(l)- \frac{h(k^{ql+p})}{2} + O(1).
\]
In the case $q=1$, then $p=1$ and we obtain 
\[
h(k^{ql+p})= h(k^{l+1})= \sum_{\substack{d\geq 0\\ k^d\mid k^{l+1}} } g(k^{(l-d)+1}) =  \sum_{0\leq d\leq l+1}  b(k^{(l-d)+1})=(1-k)l+1.
\]
When $q\geq 2$, we recall that $\gcd(p,q)=1$, and $k$ is not a $q$-th power. Therefore, by combining the definitions of $h(n)$ and $g(n)$, we obtain
\[
h(k^{ql+p})=  \sum_{\substack{d\geq 0\\ k^{qd}\mid k^{ql+p}} } g(k^{q(l-d)+p}) =  \sum_{0\leq d\leq l}  g(k^{q(l-d)+p})=0. 
\]
Thus, the left-hand side of \eqref{equation:PerronResidue2} is 
\begin{align*}
    (k-1)A(l)+O(1)+
    \begin{cases}
        (k-1)\frac{l}{2}  &  \quad\textup{if } p=q=1,\\
        0  &  \quad\textup{otherwise}.
    \end{cases}
\end{align*}
Further, recalling that $s_n=2n\pi i/(q\log k)$ and $x=k^{ql+p}$, we have 
\begin{gather*}
    1-k^{1-qs_n}= 1-k,\quad x^{s_n}=k^{s_n(ql+p)}=e^{2n\pi i  p/q},
\end{gather*}
and hence 
\[
\frac{1}{q\log k}\sum_{0<|s_n|\leq T}\frac{\eta(qs_n)}{s_n}x^{s_n}= -\frac{k-1}{2\pi i }\sum_{0<|n|\leq \frac{q\log k}{2\pi} T} \zeta\left(\frac{2n\pi i}{\log k}\right) \frac{e^{2n\pi i p/q}}{n}.  
\]
Therefore, we have
\begin{align*}
    A(l) = -\frac{1}{2\pi i }\sum_{0<|n|\leq \frac{q\log k}{2\pi} T} \zeta\left(\frac{2n\pi i}{\log k}\right) \frac{e^{2n\pi i p/q}}{n}+ \frac{S_0}{k-1}+O(1)+
\begin{cases}
    0  &  \quad\textup{if } p=q=1,\\
    \frac{l}{2}  &  \quad\textup{otherwise}
\end{cases}
\end{align*}
for $T\in [2,\infty)\setminus \mathcal{T}$ and $T\asymp x\log x$. We can remove the condition $T\notin \mathcal{T}$. Indeed, for sufficiently large $T\geq 2$,  we observe that
\begin{align*}
    \sum_{{T<|s_n|\leq T+1}}\frac{\eta(qs_n)}{qs_n}x^{s_n}\ll\frac{|qs_n|^{\frac{1}{2}+\epsilon}}{T}\ll T^{-\frac{1}{2}+\epsilon}.
\end{align*}
In addition,  if $T$ satisfies $T\asymp x\log x$, then
\begin{align*}
    &\int_{\sigma_0+iT}^{\sigma_0+i(T+1)}\varphi(k,qs)\eta(qs)\frac{x^s}{s}ds\ll x^{\sigma_0}\int_T^{T+1} \left\lvert \frac{\zeta(q\sigma_0+iqt)}{\sigma_0+it}\right\rvert dt\\
    &\ll x^{\sigma_0}\int_T^{T+1}t^{-\frac{1}{2}-q\sigma_0}dt
    \ll x^{\sigma_0}T^{-\frac{1}{2}-q\sigma_0}
    \ll x^{-\frac{1}{2}-(q-1)\sigma_0}(\log x)^{-\frac{1}{2}-q\sigma_0}\ll 1,
\end{align*}
where $-\frac{1}{2(q-1)}<\sigma_0$ leads to the last inequality.  Therefore, we conclude Proposition~\ref{proposition:perronRandS}.
\end{proof}

\section{Applying the functional equation to $S$ and the proof of Theorem~\ref{theorem:main2}}\label{section:evaluation-S}

This section gives proofs of Theorem~\ref{theorem:main2} and the following theorem.

\begin{theorem}\label{theorem:shift-trick} Suppose $1\leq p<q$ and $\gcd (p,q)=1$. For every integer $l\geq 2$ and real number $T\asymp lk^{ql}$,  we have 
\[
A(\log_k T )= \frac{\log_k T}{2} -\frac{1}{2\pi i }\sum_{0<|n|\leq \frac{q\log k}{2\pi} T} \zeta\left(\frac{2n\pi i}{\log k}\right) \frac{e^{2n\pi i p/q}}{n}+\sum_{0\leq m\leq \log_k(Tk^{-l}) } E_m(l)   +O(1),
\]
where $E_m(l)$ satisfies 
\begin{equation}\label{inequality:Em(l)} 
    |E_m(l)| \leq \min \left(\frac{ 1}{2},\ \frac{B}{  l k^{(q-1)l -m} |\sin(\pi k^{m+l+p/q} )| } \right)
\end{equation}
for some constant $B>0$ depending only on $k$, $p$, and $q$.
\end{theorem}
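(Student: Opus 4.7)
The plan is to combine Proposition~\ref{proposition:perronRandS} with an explicit asymptotic evaluation of the contour integral $S=S_{\sigma_0}(l,T)$ on $\Re s=\sigma_0=-1/(2q)$. Proposition~\ref{proposition:perronRandS} already supplies the zeta sum together with a main term $l/2$ (since $1\leq p<q$ excludes the $(1,1)$ case) up to an additive $O(1)$, so it suffices to prove
\[
S = \frac{\log_k(Tk^{-l})}{2} - \sum_{0\leq m\leq \log_k(Tk^{-l})}\{k^{m+l+p/q}\} + \sum_{0\leq m\leq \log_k(Tk^{-l})} E_m(l) + O(1);
\]
the telescoping identity $\sum_{0\leq m\leq \log_k(Tk^{-l})}\{k^{m+l+p/q}\}=A(\log_k T)-A(l)+O(1)$ then rearranges Proposition~\ref{proposition:perronRandS} into the stated formula for $A(\log_k T)$.

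To evaluate $S$, I would substitute the functional equation $\zeta(qs)=\chi(qs)\zeta(1-qs)$ together with the geometric-series expansion \eqref{equation:geoseries} of $(1-k^{1-qs})/(1-k^{-qs})$, both of which converge on $\Re s<0$. After further expanding $\zeta(1-qs)=\sum_{n\geq1}n^{qs-1}$, the double series can be interchanged with the integral by absolute convergence (driven by the super-polynomial decay of $\chi(qs)$ in $|\Im s|$). This reduces $S$ to the double sum of exponential integrals
\[
\sum_{m=0}^\infty\sum_{n=1}^\infty \frac{a_m\,k^{-(m+l+p/q)/2}}{2\pi(k-1)\,n^{3/2}}\int_{-T}^T(k^{m+l+p/q}n)^{iqt}\,\frac{\chi(-1/2+iqt)}{-1/(2q)+it}\,dt
\]
previewed in Section~\ref{section:sketch}. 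Writing $\chi(-1/2+iqt)$ via Stirling's formula as a real amplitude times a pure phase, the total phase of each inner integral has a unique stationary point $t_0(m,n)$ that lies in $[-T,T]$ precisely when $n\leq N_m\asymp Tk^{-(m+l)}$; since $T\asymp lk^{ql}$, this amounts to $m\leq\log_k(Tk^{-l})$ and $n\leq N_m\asymp lk^{(q-1)l-m}$.

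For those $(m,n)$ with the stationary point inside the interval, Lemma~\ref{lemma:stationary-phase} (stationary phase) delivers a main contribution proportional to $\sin(2\pi n k^{m+l+p/q})/n$; summing over $n\leq N_m$ at fixed $m$ reconstructs the $N_m$-th partial sum of the Fourier expansion of the sawtooth function, so that the classical bound
\[
\Bigl|\sum_{1\leq n\leq N}\frac{\sin(2\pi n x)}{\pi n}-\bigl(\tfrac12-\{x\}\bigr)\Bigr|\ll\min\!\left(\tfrac12,\tfrac{1}{N|\sin\pi x|}\right)
\]
applied with $x=k^{m+l+p/q}$ and $N=N_m$ yields the main term $\tfrac12-\{k^{m+l+p/q}\}$ plus an error of precisely the shape \eqref{inequality:Em(l)}. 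For pairs with $n>N_m$ or $m>\log_k(Tk^{-l})$ the phase has no stationary point in $[-T,T]$, so the first- and second-derivative tests (Lemmas~\ref{lemma:exponential-sums} and \ref{lemma:second-derivative}) provide enough savings to sum to $O(1)$.

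The principal obstacle is the uniform verification of the hypotheses of Lemma~\ref{lemma:stationary-phase} across the full $(m,n)$-range, together with the boundary contributions $\min(|F'(\pm T)|^{-1},\lambda_2^{-1/2})$ appearing there: one must match the polynomial growth of $|\chi(-1/2+iqt)|$ (controlled through the convexity bound of Lemma~\ref{lemma:convexity}) against the $n^{-3/2}$ weight so that the residual error collected over all $(m,n)$ is genuinely $O(1)$ and does not compete with the identified $E_m(l)$. A secondary bookkeeping task is to confirm that the truncation $|n|\leq\frac{q\log k}{2\pi}T$ of the zeta sum in Proposition~\ref{proposition:perronRandS} numerically aligns with the threshold $N_m$ at which the sawtooth Fourier series is cut off, so that the two partial sums merge cleanly into the single expression in the theorem.
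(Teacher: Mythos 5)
Your proposal follows the paper's route essentially step for step: apply Proposition~\ref{proposition:perronRandS}, evaluate $S_{-1/(2q)}(l,T)$ via the functional equation and the geometric-series expansion \eqref{equation:geoseries} (Lemma~\ref{lemma:evaluationS-step1}), treat the non-stationary pairs with the derivative tests (Lemmas~\ref{lemma:S01} and \ref{lemma:S02}), extract $\sin(2\alpha_{m,n})/(\pi n)$ from the stationary pairs by Lemma~\ref{lemma:stationary-phase} (Lemma~\ref{lemma:S03}), reconstruct the sawtooth partial sums and bound the tail via Lemma~\ref{Lemma:saw-tooth-function} with $K_m \gg lk^{(q-1)l-m}$, and close with the telescoping identity $A(l)+\sum_{0\le m\le \log_k(Tk^{-l})}\{k^{m+l+p/q}\}=A(\log_k T)+O(1)$. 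The ``obstacles'' you flag at the end are precisely what Lemmas~\ref{lemma:S01}--\ref{lemma:S03} resolve, so the outline is correct and coincides with the paper's argument.
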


\begin{lemma}[the functional equation of the Riemann zeta function]\label{Lemma:FunctionalEquation} For every $s\in \mathbb{C}\setminus \{1\}$, we have
$\zeta (s) =\chi (s) \zeta(1-s)$, where $\chi(s)=2^{s-1}\pi^s\sec(\pi s/2)/\Gamma(s)$. Further, for any fixed $\sigma\in \mathbb{R}$ and for $t\geq1$, we have
\[
\chi(s)= \left(2\pi/ t \right)^{\sigma+it-1/2}e^{i (t+\pi/4)}\left(1+O\left(\frac{1}{t}\right)\right)
\]
\end{lemma}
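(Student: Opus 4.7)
The plan is to prove the two assertions in sequence. The functional equation is a classical consequence of the self-duality of Riemann's completed zeta function together with the reflection and duplication formulas for $\Gamma$, while the asymptotic for $\chi(s)$ follows from Stirling's formula for $\Gamma$ combined with the fact that, once $t\ge 1$, one of the two complex exponentials inside $\cos(\pi s/2)$ completely dominates the other.

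For the functional equation, I would start from Riemann's completed zeta function $\xi(s)=\pi^{-s/2}\Gamma(s/2)\zeta(s)$ and establish the symmetry $\xi(s)=\xi(1-s)$ via the standard theta-function route: write $\pi^{-s/2}\Gamma(s/2)\zeta(s)=\int_0^\infty x^{s/2-1}\tfrac{\theta(x)-1}{2}\,dx$ for $\Re(s)>1$, split the integral at $x=1$, and substitute $x\mapsto 1/x$ on $(0,1)$ using Jacobi's transformation $\theta(1/x)=\sqrt{x}\,\theta(x)$ to produce an expression manifestly invariant under $s\mapsto 1-s$ (and meromorphic on $\C$ with simple poles only at $s=0,1$). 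Rearranging $\xi(s)=\xi(1-s)$ gives $\zeta(s)=\pi^{s-1/2}\bigl[\Gamma((1-s)/2)/\Gamma(s/2)\bigr]\zeta(1-s)$. Next I would identify this factor with $\chi(s)$: by Euler's reflection formula and $\sin(\pi(1-s)/2)=\cos(\pi s/2)$ one has $\Gamma((1-s)/2)=\pi/[\cos(\pi s/2)\,\Gamma((1+s)/2)]$, and then the Legendre duplication formula $\Gamma(s/2)\Gamma((1+s)/2)=2^{1-s}\sqrt{\pi}\,\Gamma(s)$ collapses the ratio to $\pi^{s-1/2}\Gamma((1-s)/2)/\Gamma(s/2)=2^{s-1}\pi^{s}\sec(\pi s/2)/\Gamma(s)=\chi(s)$.

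For the asymptotic, fix $\sigma\in\R$ and let $s=\sigma+it$ with $t\ge 1$. Writing $\sec(\pi s/2)=2/(e^{i\pi s/2}+e^{-i\pi s/2})$ and observing that the first exponential has modulus $e^{-\pi t/2}$ while the second has modulus $e^{\pi t/2}$, I factor out the dominant one to get $\sec(\pi s/2)=2e^{i\pi s/2}\bigl(1+O(e^{-\pi t})\bigr)$, so
\[
\chi(s)=\frac{2^{s}\pi^{s}e^{i\pi s/2}}{\Gamma(s)}\bigl(1+O(e^{-\pi t})\bigr).
\]
I then apply Stirling's formula $\log\Gamma(s)=(s-\tfrac12)\log s-s+\tfrac12\log(2\pi)+O(1/t)$, valid in the sector $|\arg s|<\pi-\delta$, together with the expansion $\log s=\log t+i\pi/2-i\sigma/t+O(1/t^{2})$ (taking the principal branch). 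Multiplying and collecting terms gives $(s-\tfrac12)\log s=(\sigma-\tfrac12)\log t+i\pi(\sigma-\tfrac12)/2+it\log t-\pi t/2+\sigma+O(1/t)$. Substituting into $\log\chi(s)=s\log(2\pi)+i\pi s/2-\log\Gamma(s)+O(1/t)$, the $\pm\pi t/2$ terms cancel, the $\sigma$'s cancel, the imaginary constants combine to $i\pi/4$, and what remains is $\log\chi(s)=-(s-\tfrac12)\log(t/(2\pi))+it+i\pi/4+O(1/t)$. Exponentiating yields exactly the stated formula.

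The only subtle technical point is bookkeeping in the Stirling step: because $(s-\tfrac12)\log s$ is multiplied by a factor of size $|s|\asymp t$, one has to expand $\log s$ to second order — the $-i\sigma/t$ correction produces the real constant $\sigma$ that precisely cancels the $+s$ coming from Stirling, and only this cancellation leaves the final error at $O(1/t)$ rather than $O(1)$. Beyond that, the lemma is entirely classical; a one-line alternative proof would simply cite, e.g., Titchmarsh, \textit{The Theory of the Riemann Zeta-function}, \S2.1 and \S4.12.
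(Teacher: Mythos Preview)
Your proof is correct; both the theta-function derivation of the functional equation and the Stirling computation for $\chi(s)$ are carried out cleanly, including the second-order expansion of $\log s$ needed to make the real constants cancel and leave an $O(1/t)$ error. The paper itself does not prove this lemma at all but simply cites Titchmarsh \cite[(2.1.8), (4.12.3)]{Titchmarsh}, which is precisely the one-line alternative you mention in your closing sentence; your argument therefore supplies exactly the details behind that reference.
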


\begin{proof}
See \cite[(2.1.8), (4.12.3)]{Titchmarsh}.
\end{proof}

\begin{lemma}\label{lemma:FunctionalEquation2} Let $s=\sigma+it$. Fix any $\sigma<0$. For $t\geq 1$, we have  
\[
\frac{\zeta(qs)}{s}=e^{-i\pi /4}\cdot \left(\frac{2\pi}{q}\right)^{q\sigma-1/2} \cdot t^{-1/2-q\sigma} \sum_{n=1}^\infty n^{q\sigma-1} \left(\frac{2n\pi e}{qt} \right)^{qit}+ O(t^{-3/2-q\sigma}).
\]
\end{lemma}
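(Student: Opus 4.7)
The plan is to apply the functional equation from Lemma~\ref{Lemma:FunctionalEquation} to $\zeta(qs)$, expand the resulting $\zeta(1-qs)$ as an absolutely convergent Dirichlet series, and then simplify by combining all the factors of the form $(\cdots)^{iqt}$ into a single term. Since $\sigma<0$ and $q\geq 1$, we have $\Re(qs)=q\sigma<0$, so $\Re(1-qs)=1-q\sigma>1$ and the expansion
\[
\zeta(1-qs)=\sum_{n=1}^\infty n^{qs-1}=\sum_{n=1}^\infty n^{q\sigma-1} n^{iqt}
\]
is legitimate and absolutely convergent, with $\sum_{n=1}^\infty n^{q\sigma-1}\ll 1$ uniformly in $t$.

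Next, I substitute $s\mapsto qs$ (so the imaginary part becomes $qt$) in the asymptotic for $\chi$ given by Lemma~\ref{Lemma:FunctionalEquation}, which yields
\[
\chi(qs)=\Bigl(\tfrac{2\pi}{qt}\Bigr)^{q\sigma-\frac{1}{2}} \Bigl(\tfrac{2\pi}{qt}\Bigr)^{iqt} e^{i(qt+\pi/4)}\Bigl(1+O\bigl(\tfrac{1}{t}\bigr)\Bigr).
\]
The key bookkeeping step is merging $\bigl(\tfrac{2\pi}{qt}\bigr)^{iqt}\cdot e^{iqt}\cdot n^{iqt}$ into the single expression $\bigl(\tfrac{2n\pi e}{qt}\bigr)^{iqt}$, which follows from the identity $\log(2\pi/(qt))+1+\log n=\log(2n\pi e/(qt))$. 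After this consolidation, the functional equation gives
\[
\zeta(qs)=e^{i\pi/4}\Bigl(\tfrac{2\pi}{qt}\Bigr)^{q\sigma-\frac{1}{2}}\Bigl(1+O\bigl(\tfrac{1}{t}\bigr)\Bigr)\sum_{n=1}^\infty n^{q\sigma-1}\Bigl(\tfrac{2n\pi e}{qt}\Bigr)^{iqt}.
\]

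The final step is to divide by $s=\sigma+it$. Writing $\tfrac{1}{s}=\tfrac{1}{it}\bigl(1-\tfrac{\sigma}{it}\bigr)^{-1}=\tfrac{e^{-i\pi/2}}{t}\bigl(1+O(1/t)\bigr)$ and combining $e^{i\pi/4}\cdot e^{-i\pi/2}=e^{-i\pi/4}$, I separate $\bigl(\tfrac{2\pi}{qt}\bigr)^{q\sigma-1/2}=\bigl(\tfrac{2\pi}{q}\bigr)^{q\sigma-1/2}t^{1/2-q\sigma}$ and gather the $t^{1/2-q\sigma}/t=t^{-1/2-q\sigma}$. The main term is exactly the claimed expression, and the contribution of the $O(1/t)$ factors is
\[
\ll t^{-1/2-q\sigma}\cdot t^{-1}\sum_{n=1}^\infty n^{q\sigma-1}\ll t^{-3/2-q\sigma},
\]
using absolute convergence of the Dirichlet series (this is where $\sigma<0$ is essential).

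The proof is essentially a careful manipulation of exponentials; the only subtle point is verifying the merge $(2\pi/(qt))^{iqt}e^{iqt}n^{iqt}=(2n\pi e/(qt))^{iqt}$ and ensuring that the error term from $\chi$ combines with the absolutely convergent tail without losing any $t$-power. No deep analytic input beyond Lemma~\ref{Lemma:FunctionalEquation} is required.
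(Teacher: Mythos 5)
Your proof is correct and follows essentially the same route as the paper: apply the functional equation $\zeta(qs)=\chi(qs)\zeta(1-qs)$, use the asymptotic for $\chi$ from Lemma~\ref{Lemma:FunctionalEquation}, expand $\zeta(1-qs)$ as an absolutely convergent Dirichlet series (valid since $1-q\sigma>1$), merge the $iqt$-power factors, and absorb the $(1+O(1/t))$ error using absolute convergence of $\sum n^{q\sigma-1}$. The only difference is that you spell out the bookkeeping (the exponential merge and the division by $s$) a bit more explicitly than the paper's four-line computation, but every step matches.
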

\begin{proof} Lemma~\ref{Lemma:FunctionalEquation} implies that for $t\geq 1$, 
\begin{align*}
    \frac{\zeta(qs)}{s} &= \frac{1}{it}\left(\frac{1}{1+\sigma/(it) } \right) \cdot \left(\frac{2\pi}{qt} \right)^{q\sigma+qit-1/2}e^{i (qt+\pi/4)} \zeta(1-q\sigma-qit)  \left(1+O\left(\frac{1}{t}\right)\right) \\
    &= \frac{1}{it}  \cdot \left(\frac{2\pi}{qt} \right)^{q\sigma+qit-1/2}e^{i (qt+\pi/4)} \zeta(1-q\sigma-qit)  \left(1+O\left(\frac{1}{t}\right)\right)\\
    &= e^{-i\pi /4}\cdot \left(\frac{2\pi}{q}\right)^{q\sigma-1/2} \cdot t^{-1/2-q\sigma} \sum_{n=1}^\infty n^{q\sigma-1} \left(\frac{2n\pi e}{qt} \right)^{qit}\left(1+O\left(\frac{1}{t}\right)\right)\\
    &=e^{-i\pi /4}\cdot \left(\frac{2\pi}{q}\right)^{q\sigma-1/2} \cdot t^{-1/2-q\sigma} \sum_{n=1}^\infty n^{q\sigma-1} \left(\frac{2n\pi e}{qt} \right)^{qit}+ O(t^{-3/2-q\sigma}).
\end{align*}
\end{proof}

\begin{lemma}\label{Lemma:saw-tooth-function}
Let $\psi(y)$ be the saw-tooth function, that is, 
\[
\psi(y)= 
\begin{cases}
    \{y\}-1/2  &  \textup{if } y\notin \mathbb{Z},\\
    0  &  \textup{if } y\in \mathbb{Z}.
\end{cases}
\]
Then for every $K\in \mathbb{N}$ and $y\in\R$, we have 
\[
\left\lvert \sum_{k=1}^K \frac{\sin (2\pi ky)}{\pi k}+\psi(y)\right\rvert \leq \min \left(\frac{1}{2}, \frac{1}{(2K+1)\pi |\sin \pi y|}\right).
\]
\end{lemma}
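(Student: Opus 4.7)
The plan is to fix $y\in\R$, reduce to $y\in(0,1)$ by $1$-periodicity of both sides (and observe that integer $y$ trivializes the inequality, since both terms vanish), and analyze the difference
\[
E_K(y):=\sum_{k=1}^K\frac{\sin(2\pi ky)}{\pi k}+\psi(y)=\sum_{k=1}^K\frac{\sin(2\pi ky)}{\pi k}+y-\tfrac12.
\]
Termwise differentiation yields $E_K'(y)=1+2\sum_{k=1}^K\cos(2\pi ky)=D_K(y)$, the Dirichlet kernel, with closed form $D_K(y)=\sin((2K+1)\pi y)/\sin(\pi y)$. Since $E_K(1/2)=0$, integration gives the representation
\[
E_K(y)=-\int_y^{1/2}\frac{\sin((2K+1)\pi t)}{\sin(\pi t)}\,dt\qquad (0<y\le 1/2),
\]
and the symmetry $D_K(1-t)=D_K(t)$ (valid since $2K+1$ is odd) handles $y\in[1/2,1)$.

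For the bound $|E_K(y)|\le1/2$, it suffices to show $0\le\int_0^yD_K(t)\,dt\le1$ on $[0,1]$. I would split $[0,y]\cap[0,1/2]$ at the zeros $t=j/(2K+1)$ of $\sin((2K+1)\pi t)$; because the weight $1/\sin(\pi t)$ is positive and strictly decreasing on $(0,1/2]$, the resulting alternating series of partial integrals has positive first term and strictly decreasing absolute values, giving non-negativity by the Leibniz criterion. The upper bound then follows from $\int_0^1D_K(t)\,dt=1$ combined with the $t\leftrightarrow 1-t$ symmetry.

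For the bound $|E_K(y)|\le 1/\bigl((2K+1)\pi|\sin\pi y|\bigr)$, I would integrate by parts in the integral representation, taking $u=1/\sin(\pi t)$ and $dv=\sin((2K+1)\pi t)\,dt$. The boundary term at $t=1/2$ vanishes since $\cos((2K+1)\pi/2)=0$, and the boundary term at $t=y$ contributes the main term $-\cos((2K+1)\pi y)/\bigl((2K+1)\pi\sin(\pi y)\bigr)$, whose modulus is exactly the target. The remainder $\frac{1}{2K+1}\int_y^{1/2}\cos((2K+1)\pi t)\cos(\pi t)/\sin^2(\pi t)\,dt$ carries a positive decreasing weight $\cos(\pi t)/\sin^2(\pi t)$, so Bonnet's second mean value theorem reduces it to an elementary integral of $\cos((2K+1)\pi t)$, which is $O\bigl(1/(2K+1)^2\bigr)$ times an explicit factor.

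The main obstacle will be controlling the remainder sharply enough to retain the constant $1$ in the final bound, since a crude combination of the two terms produces an extra factor. I expect to absorb this either by exploiting the partial cancellation between the boundary term and the remainder, or by passing to the tail representation $E_K(y)=-\sum_{k>K}\sin(2\pi ky)/(\pi k)$, applying the identity $\sin(2\pi ky)=\bigl(\cos((2k-1)\pi y)-\cos((2k+1)\pi y)\bigr)/(2\sin\pi y)$, and performing a double Abel summation on the resulting telescoping series. In the regime where the second bound exceeds $1/2$ (that is, when $|\sin\pi y|<2/\bigl((2K+1)\pi\bigr)$), the first bound already suffices, so only the complementary range needs the delicate estimate.
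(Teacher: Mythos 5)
The paper does not actually prove this lemma; it is quoted verbatim from \cite[Lemma~D.1]{Montgomery-Vaughan}. So your attempt is judged on its own merits, and while the overall framework is the standard one (reduce to $y\in(0,1)$, write $E_K(y)=\int_0^y D_K(t)\,dt-\tfrac12$ with $D_K$ the Dirichlet kernel, then estimate the integral), it has two gaps, one minor and one substantive.

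For the bound $|E_K(y)|\le\tfrac12$, the Leibniz argument on $[0,1/2]$ gives only $0\le\int_0^y D_K\le I_0$ with $I_0:=\int_0^{1/(2K+1)}D_K(t)\,dt$, and the step you invoke --- ``$\int_0^1 D_K=1$ plus the $t\leftrightarrow 1-t$ symmetry'' --- does not by itself force $I_0\le 1$: $I_0$ is precisely the extremal value (it carries the Gibbs overshoot), and symmetry just moves the problem around. The statement is true and easy ($D_K(t)\le D_K(0)=2K+1$ pointwise, so $I_0\le(2K+1)\cdot\tfrac1{2K+1}=1$), but you need to say it. The real gap is in the second bound $\bigl|E_K(y)\bigr|\le\frac1{(2K+1)\pi|\sin\pi y|}$. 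Running your IBP with $u=1/\sin\pi t$ and then Bonnet on the remainder yields $|E_K(y)|\le\frac1{(2K+1)\pi\sin\pi y}\bigl(1+\frac{2\cos\pi y}{(2K+1)\sin\pi y}\bigr)$ --- the parenthesis exceeds $1$, and a direct Bonnet (no IBP) gives $\frac{2}{(2K+1)\pi\sin\pi y}$, factor two off. Your ``partial cancellation'' claim (that the boundary term $-\frac{\cos((2K+1)\pi y)}{(2K+1)\pi\sin\pi y}$ and the remainder integral have opposite signs) is not generally true: if $y$ sits just below a zero of $\cos((2K+1)\pi t)$, the first lobe of the alternating remainder is negligibly small and the sign of the integral is governed by the \emph{second} lobe, which has the \emph{same} sign as the boundary term. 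Your alternative route --- writing $E_K(y)=-\sum_{k>K}\sin(2\pi ky)/(\pi k)$, applying $\sin 2\pi ky=\frac{\cos(2k-1)\pi y-\cos(2k+1)\pi y}{2\sin\pi y}$, and summing by parts --- I carried out: a single Abel summation gives $|E_K(y)|\le\frac1{(K+1)\pi|\sin\pi y|}$ (the telescoped sum contributes $\frac1{K+1}+\sum_{k>K}\frac1{k(k+1)}=\frac2{K+1}$), and a second Abel pass on $\sum_{k>K}\frac{c_{k+1}}{k(k+1)}$ does not recover the missing factor. Finally, your observation that the regime $\sin\pi y<2/((2K+1)\pi)$ is already covered by the $\tfrac12$ bound is correct but leaves the complementary regime untreated. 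So the constant $2K+1$ is not reached by any of the routes you sketch, and the proposal as written does not prove the lemma. (For what it is worth, in the paper's only use of this lemma --- the bound \eqref{inequality:Em(l)} in Theorem~\ref{theorem:shift-trick}, where the constant is absorbed into $B$ --- the weaker $\frac1{(K+1)\pi|\sin\pi y|}$ that your Abel argument does give would have sufficed.)
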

\begin{proof}
See \cite[Lemma~D.1]{Montgomery-Vaughan}.
\end{proof}
We take any integer $l\geq 2$ and any real number $T\asymp lk^{ql}$.
\begin{lemma}\label{lemma:evaluationS-step1}
Let $(a_m)_{m\geq 0}$ be the sequence  in \eqref{equation:geoseries}. For every $m\in \mathbb{Z}_{\geq 0}$ and $n\in \mathbb{N}$, let $\alpha_{m,n}=\alpha_{m,n}(l)=k^{l+m+p/q}n \pi$. For every $t\in [1,T]$, we define
\[
F(t)=F_{m,n}(t)\coloneqq qt \log \left(\frac{2k^{m+l+p/q}n\pi e }{qt} \right) =qt\log\left(\frac{2\alpha_{m,n}e}{qt} \right).   
\]
Then we have 
\begin{equation}\label{equation:ssigma1}
    (k-1)S_{-1/(2q)}(l,T) =\frac{1}{2\pi^{3/2}} \sum_{n=1}^\infty \sum_{m=0}^\infty n^{-1} a_m \alpha_{m,n}^{-1/2} q \cdot \Re \left(e^{-i\pi/4}\int_{1}^{T} e^{iF_{m,n}(t)} dt\right)+O(1).
\end{equation}
\end{lemma}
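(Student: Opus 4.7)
The plan is to parameterize the contour as $s = -1/(2q) + it$, $t \in [-T,T]$, so that
\[
(k-1)S_{-1/(2q)}(l,T) = \frac{1}{2\pi}\int_{-T}^{T} g(t)\,dt, \qquad g(t) := \frac{\eta(qs)\, k^{(ql+p)s}}{s\,(1-k^{-qs})}\bigg|_{s = -1/(2q) + it}.
\]
Since $\Re(s) = -1/(2q) < 0$, the expansion \eqref{equation:geoseries} converts the ratio $(1-k^{1-qs})/(1-k^{-qs})$ inside $\eta(qs)/(1-k^{-qs}) = \zeta(qs)\cdot(1-k^{1-qs})/(1-k^{-qs})$ into the absolutely convergent series $\sum_{m \geq 0} a_m k^{qms}$, the absolute convergence $\sum_m |a_m| k^{-m/2}<\infty$ justifying a term-by-term exchange with the integral. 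Next I apply Lemma~\ref{lemma:FunctionalEquation2} with $\sigma = -1/(2q)$: for $t \geq 1$,
\[
\frac{\zeta(qs)}{s} = \frac{q\,e^{-i\pi/4}}{2\pi}\sum_{n=1}^{\infty} n^{-3/2}\left(\frac{2n\pi e}{qt}\right)^{qit} + O(t^{-1}).
\]
Multiplying by $k^{qms+(ql+p)s} = k^{-(m+l+p/q)/2}\,e^{iqt\log k^{m+l+p/q}}$ fuses the two phase factors into $e^{iF_{m,n}(t)}$, via $2n\pi\,k^{m+l+p/q} = 2\alpha_{m,n}$, while the amplitude $n^{-3/2}\,k^{-(m+l+p/q)/2}$ simplifies to $\pi^{1/2} n^{-1}\alpha_{m,n}^{-1/2}$ using $k^{-(m+l+p/q)/2} = \alpha_{m,n}^{-1/2}(n\pi)^{1/2}$.

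To pass from $\int_{-T}^T$ to $\Re \int_1^T$, I would invoke the Schwarz reflection principle: $\zeta(qs)$, $\eta(qs)$, $k^{(ql+p)s}/(1-k^{-qs})$, and $1/s$ are all real on the real axis, so $g(-t) = \overline{g(t)}$. Therefore
\[
\int_{-T}^T g(t)\,dt = 2\Re \int_0^T g(t)\,dt = 2\Re \int_1^T g(t)\,dt + O(k^{-l/2}),
\]
where the short interval $[0,1]$ contributes only $O(k^{-l/2}) = O(1)$ because the integrand there has no pole (we are away from $s = 0$ on the line $\Re s = -1/(2q)$) and carries the exponentially small prefactor $|k^{(ql+p)s}| = k^{-l/2-p/(2q)}$. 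Combining $\frac{1}{2\pi}$ with the doubling yields $\frac{1}{\pi}$, which together with the leading factor $\frac{q}{2\pi^{1/2}}$ from $g(t)$ collects into the claimed prefactor $\frac{q}{2\pi^{3/2}}$ in front of $\Re\bigl(e^{-i\pi/4}\int_1^T e^{iF_{m,n}(t)}\,dt\bigr)$.

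The principal obstacle is controlling the error from the $O(t^{-1})$ remainder in Lemma~\ref{lemma:FunctionalEquation2}. Once summed against $\sum_{m \geq 0} |a_m|\,k^{-(m+l+p/q)/2} \ll k^{-l/2}$ and integrated on $[1,T]$ against $t^{-1}$, its total contribution is $O(k^{-l/2}\log T)$; since $T \asymp l k^{ql}$ we have $\log T \asymp l$, giving $O(l k^{-l/2}) = O(1)$, which is absorbed into the $O(1)$ remainder of the claim. The triple sum/integral interchange is valid by absolute convergence of $\sum_{m,n} |a_m|\,n^{-3/2}\,k^{-(m+l+p/q)/2} = k^{-l/2-p/(2q)}\bigl(\sum_m |a_m| k^{-m/2}\bigr)\bigl(\sum_n n^{-3/2}\bigr) < \infty$, uniformly in $t$. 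Assembling these estimates and collecting constants reproduces exactly \eqref{equation:ssigma1}.
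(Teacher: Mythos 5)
Your proposal is correct and follows the paper's proof essentially step for step: parametrize the line $\Re s=-1/(2q)$, expand $(1-k^{1-qs})/(1-k^{-qs})$ via \eqref{equation:geoseries}, apply Lemma~\ref{lemma:FunctionalEquation2} at $\sigma=-1/(2q)$, fold $\int_{-T}^{T}$ into $2\Re\int_{1}^{T}$ by Schwarz reflection with the $[0,1]$ piece and the $O(t^{-1})$ remainder both absorbed into $O(1)$. Your constant bookkeeping is in fact cleaner than the printed proof, which silently drops the $\pi^{1/2}$ arising from $n^{-3/2}k^{-(m+l+p/q)/2}=\pi^{1/2}n^{-1}\alpha_{m,n}^{-1/2}$ in two intermediate displays (writing $\tfrac{1}{2\pi}$ and $\tfrac{1}{2\pi^{2}}$ where $\tfrac{1}{2\pi^{1/2}}$ and $\tfrac{1}{2\pi^{3/2}}$ are meant), yet still lands on the correct $\tfrac{1}{2\pi^{3/2}}$ in the lemma statement.
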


\begin{proof}

For every $\Re(s)<0$, by \eqref{equation:geoseries}, we recall that
\begin{align*}
    &\frac{1-k^{1-qs}}{1-k^{-qs}}= \sum_{m=0}^\infty a_m k^{mqs}.
\end{align*}
 We now choose $\sigma=-1/(2q)$. Then  for $s=\sigma+it$ ($t\in [-1,1]$), we have 
\[
\frac{\zeta(qs)}{s}\cdot\frac{1-k^{1-qs}}{1-k^{-qs}}k^{(ql+p)s}\ll_{k,p,q} k^{-l/2}\ll 1.  
\]
By applying Lemma~\ref{lemma:FunctionalEquation2}, for $s=\sigma+it$ ($t\in [1,T]$),  we have
\begin{align*}
    &\frac{\zeta(qs)}{s}\cdot\frac{1-k^{1-qs}}{1-k^{-qs}}k^{(ql+p)s}\\
    &= \sum_{n=1}^\infty \sum_{m=0}^\infty  a_m k^{mqs} k^{(ql+p)s}e^{-i\pi /4}\cdot \left(\frac{2\pi}{q}\right)^{-1}  n^{-3/2} \left(\frac{2n\pi e}{qt} \right)^{qit}+ O(k^{-l/2} t^{-1})\\
    &= \frac{1}{2\pi} \sum_{n=1}^\infty \sum_{m=0}^\infty n^{-3/2} a_m k^{-(m+l+p/q)/2}q \cdot e^{-i\pi /4} \left(\frac{2k^{m+l+p/q}n\pi e}{qt} \right)^{qit}+ O(k^{-l/2} t^{-1})\\
    &=\frac{1}{2\pi} \sum_{n=1}^\infty \sum_{m=0}^\infty n^{-1} a_m \alpha_{m,n}^{-1/2}q \cdot e^{-i\pi /4} \left(\frac{2\alpha_{m,n}e}{qt} \right)^{qit}+ O(k^{-l/2} t^{-1}),
\end{align*}
and hence the Schwarz reflection principle leads to the following:
\begin{align*}
    (k-1)S
    &=(k-1)S_\sigma(l,T) \\
    &=\frac{1}{2\pi i}\int_{\sigma-iT}^{\sigma+iT}\frac{\zeta(qs)}{s}\cdot\frac{1-k^{1-qs}}{1-k^{-qs}}k^{(ql+p)s}ds\\
    &=\frac{1}{2\pi^2} \sum_{n=1}^\infty \sum_{m=0}^\infty n^{-1} a_m \alpha_{m,n}^{-1/2} q  \cdot \Re \left(e^{-i\pi/4}\int_{1}^{T} \left(\frac{2\alpha_{m,n}e}{qt}\right)^{qit} dt\right).\\
    &\quad +O(1)+O\left(k^{-l/2}\int_1^Tt^{-1}dt\right).
\end{align*}
Since $T\ll lk^{ql}$, it follows that
\[
k^{-l/2}\int_1^Tt^{-1} dt \ll k^{-l/2} \log T \ll 1.
\]
By the definitions of  $\alpha_{m,n}$ and $F_{m,n}(t)$, we obtain Lemma~\ref{lemma:evaluationS-step1}. 
\end{proof}

Let $m\in \mathbb{Z}_{\geq 0}$ and $n\in \mathbb{N}$. Let $c=c_{m,n}=2\alpha_{m,n}/q=2k^{m+l+p/q}n\pi/q$. Since $F'(t)= q\log(2\alpha_{m,n}/(qt))$, we have $F'(t)=0$ if and only if $t=c$. Let $\delta>0$ be a sufficiently small absolute constant. It is enough to choose $\delta=1/4$. We define
\begin{equation}
    U=U_{m,n}= \min (1/2, \alpha_{m,n}^{-1/2} (m+l)^{1+\delta} n^{\delta}).
\end{equation}
Then, we have
\begin{equation}\label{inequality:convergentU} 
    \sum_{n=1}^\infty \sum_{m=0}^\infty n^{-1} \alpha_{m,n}^{-1/2} U_{m,n}^{-1} \ll 1.   
\end{equation}
Let us decompose the sum on the right-hand side of \eqref{equation:ssigma1} into three sums as follows:
\begin{align*}
    &\sum_{n=1}^\infty \sum_{m=0}^\infty n^{-1} a_m \alpha_{m,n}^{-1/2} q \cdot \Re \left(e^{-i\pi/4}\int_{1}^{T} e^{iF_{m,n}(t)} dt\right)\\
    &= \underset{T\leq c_{m,n}(1-U_{m,n}) }{\sum_{n=1}^\infty \sum_{m=0}^\infty}  + \underset{c_{m,n}(1-U_{m,n})<T<c_{m,n}(1+U_{m,n}) }{\sum_{n=1}^\infty \sum_{m=0}^\infty}  + \underset{c_{m,n}(1+U_{m,n})\leq T }{\sum_{n=1}^\infty \sum_{m=0}^\infty}  \eqqcolon S_{01}+S_{02}+S_{03}. 
\end{align*}

\begin{lemma}\label{lemma:S01}
We have $S_{01}\ll 1$. 
\end{lemma}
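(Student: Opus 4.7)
The plan is to apply the first derivative test (Lemma~\ref{lemma:exponential-sums}) to the inner integral $\int_1^T e^{iF_{m,n}(t)}\,dt$ for each admissible pair $(m,n)$, extract a uniform lower bound for $|F'_{m,n}|$ from the hypothesis $T \leq c_{m,n}(1-U_{m,n})$, and then conclude using the absolute convergence recorded in \eqref{inequality:convergentU}.

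First I would record the two derivatives of the phase. A direct computation gives
\[
F'_{m,n}(t) = q\log(c_{m,n}/t), \qquad F''_{m,n}(t) = -q/t,
\]
so $F'_{m,n}$ is strictly monotonic on $[1,T]$ and the monotonicity hypothesis of Lemma~\ref{lemma:exponential-sums} is automatic. In the regime defining $S_{01}$ the stationary point $t = c_{m,n}$ lies to the right of $T$, so $F'_{m,n}(t) > 0$ on $[1,T]$ with its minimum attained at $t=T$. The key estimate is
\[
|F'_{m,n}(t)| \geq q\log\frac{c_{m,n}}{T} \geq q\log\frac{1}{1-U_{m,n}} \geq qU_{m,n},
\]
where the last inequality uses $-\log(1-u) \geq u$ for $u\in[0,1)$ (and $U_{m,n}\leq 1/2 < 1$).

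Lemma~\ref{lemma:exponential-sums} then gives $\bigl|\int_1^T e^{iF_{m,n}(t)}\,dt\bigr| \leq 4/(qU_{m,n})$. Substituting this into the definition of $S_{01}$ and using $|a_m| \leq k$ yields
\[
|S_{01}| \ll \sum_{n=1}^\infty \sum_{m=0}^\infty n^{-1} \alpha_{m,n}^{-1/2} U_{m,n}^{-1},
\]
which is $O(1)$ by \eqref{inequality:convergentU}.

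There is no real obstacle in this lemma: the cutoff $T \leq c_{m,n}(1 - U_{m,n})$ is designed precisely so that $|F'_{m,n}|$ is bounded below by a quantity comparable to $U_{m,n}$, and the definition of $U_{m,n}$ is in turn engineered so that the resulting double sum converges. The genuinely delicate regions are $S_{02}$ (which sits on the stationary point and will require the stationary phase formula of Lemma~\ref{lemma:stationary-phase}) and $S_{03}$ (which will again use the first derivative test, with a symmetric bound $t \geq c_{m,n}(1+U_{m,n})$).
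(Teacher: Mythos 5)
Your argument is correct and is essentially identical to the paper's proof: both bound $|F'_{m,n}|$ from below by $q\log(1/(1-U_{m,n}))\gg U_{m,n}$ on $[1,T]$ in the regime $T\le c_{m,n}(1-U_{m,n})$, apply the first derivative test to get $\left\lvert\int_1^T e^{iF_{m,n}}\,dt\right\rvert\ll U_{m,n}^{-1}$, and then sum using \eqref{inequality:convergentU}. The only (harmless) additions are your explicit verification of the monotonicity of $F'_{m,n}$ via $F''_{m,n}=-q/t$ and the elementary bound $-\log(1-u)\ge u$, which the paper leaves implicit.
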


\begin{proof}
Take any $(m,n)\in \mathbb{Z}_{\geq 0}\times \mathbb{N}$ with $T \leq c_{m,n}(1-U_{m,n})$. Then, each $t\in [1,T]$ satisfies
\[
|F'(t)| \geq q\log \left(\frac{1}{1-U} \right) \gg U.
\]
Therefore, by Lemma~\ref{lemma:exponential-sums}, we obtain 
\begin{equation}\label{inequality:first-deriv}
    \left\lvert \int_{1}^{T} e^{iF_{m,n}(t)} dt\right\rvert \ll U^{-1},
\end{equation}
and hence \eqref{inequality:convergentU} implies that 
\begin{align*}
    S_{01}= \underset{T\leq c_{m,n}(1-U_{m,n}) }{\sum_{n=1}^\infty \sum_{m=0 }^\infty} n^{-1} a_m \alpha_{m,n}^{-1/2} q \cdot \Re \left(e^{-i\pi/4}\int_{1}^{T} e^{iF_{m,n}(t)} dt\right) \ll 1. 
\end{align*}
\end{proof}

\begin{lemma}\label{lemma:S02}
We have $S_{02}\ll 1$. 
\end{lemma}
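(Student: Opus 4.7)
The plan is to replace the first derivative test by the second derivative test (Lemma~\ref{lemma:second-derivative}), since in the window of summation defining $S_{02}$ the stationary point $c_{m,n}=2\alpha_{m,n}/q$ of $F_{m,n}$ lies close to $T$, so $F'_{m,n}$ is no longer bounded away from $0$ on $[1,T]$. A direct computation gives $F''_{m,n}(t)=-q/t$, hence $|F''_{m,n}(t)|\geq q/T$ on $[1,T]$, and Lemma~\ref{lemma:second-derivative} yields
\[
\left\lvert \int_1^T e^{iF_{m,n}(t)}\,dt \right\rvert \ll \sqrt{T/q}\ll \alpha_{m,n}^{1/2},
\]
where the last step uses $T\asymp c_{m,n}\asymp \alpha_{m,n}$ inside the window. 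Multiplying by the weight $n^{-1}\alpha_{m,n}^{-1/2}q$, each term of $S_{02}$ contributes $O(1/n)$.

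Next I count valid pairs $(m,n)$. For fixed $m\geq 0$, set $n_m\coloneqq Tq/(2\pi k^{m+l+p/q})$. The condition $c_{m,n}(1-U_{m,n})<T<c_{m,n}(1+U_{m,n})$ forces $n\asymp n_m$, hence also $U_{m,n}\asymp U_{m,n_m}$, so the valid $n$ lie in an interval around $n_m$ of length $\ll n_m U_{m,n_m}$, containing at most $1+2n_m U_{m,n_m}$ integers. Summing $1/n\asymp 1/n_m$ over these integers,
\[
\sum_{n\text{ valid}}\frac{1}{n}\ll U_{m,n_m}+\frac{1}{n_m}.
\]
The requirement $n_m\geq 1$ together with $T\asymp lk^{ql}$ restricts the relevant $m$ to $0\leq m\leq (q-1)l+O(\log l)$, so it suffices to bound $\sum_m U_{m,n_m}$ and $\sum_m 1/n_m$ over this range.

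For the second sum, $n_m\asymp lk^{(q-1)l-m}$, so $\sum_m 1/n_m\ll l^{-1}\sum_{m\geq 0} k^{m-(q-1)l}\ll 1/l$. For the first, observe $\alpha_{m,n_m}\asymp T\asymp lk^{ql}$ uniformly in $m$; combining with $(m+l)^{1+\delta}\ll l^{1+\delta}$ and $n_m^\delta\ll l^\delta k^{\delta(q-1)l}$ yields
\[
U_{m,n_m}\ll (lk^{ql})^{-1/2}(m+l)^{1+\delta}n_m^\delta \ll l^{1/2+2\delta}k^{-(q/2-\delta(q-1))l}.
\]
Since the number of relevant $m$ is $O(l)$, the resulting total is $\ll l^{3/2+2\delta}k^{-(q/2-\delta(q-1))l}$, which is $o(1)$ provided $\delta<q/(2(q-1))$ (interpreted as $\delta<\infty$ when $q=1$); the choice $\delta=1/4$ works uniformly for every $q\geq 1$. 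The main obstacle lies in the bookkeeping of the preceding paragraph: one must verify that $\alpha_{m,n}$ and $U_{m,n}$ can, up to absolute constants, be replaced by their central values $\alpha_{m,n_m}$ and $U_{m,n_m}$ on the $n$-window, and that after this replacement the exponent $q/2-\delta(q-1)$ remains strictly positive for the fixed $\delta=1/4$, which is what makes the sum over $m$ (of length $\asymp l$) converge to $O(1)$.
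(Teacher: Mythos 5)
Your proof is correct, and it follows the same overall strategy as the paper (second derivative test inside the stationary window, then counting valid pairs $(m,n)$), but with one genuine simplification: you apply the second derivative test directly on the whole interval $[1,T]$, using $|F''_{m,n}(t)|=q/t\geq q/T$ there, rather than splitting $[1,T]$ into $[1,c(1-U)]$ (handled by the first derivative test and absorbed via \eqref{inequality:convergentU}) and $[c(1-U),T]$ (handled by the second derivative test), as the paper does. Since $F''$ never vanishes on $(0,\infty)$ the single application is perfectly licit and removes a step of bookkeeping, so your route is a bit cleaner; the paper's split is natural mainly because it mirrors the structure used for $S_{01}$ and $S_{03}$. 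Your replacement argument (that $\alpha_{m,n}\asymp\alpha_{m,n_m}$ and $U_{m,n}\asymp U_{m,n_m}$ throughout the window because $U_{m,n}\leq 1/2$ forces $n\asymp n_m$) is exactly what is needed, and the resulting count $\ll 1+n_mU_{m,n_m}$ matches the paper's $\ll 1+UTk^{-m-l}$ since $n_m\asymp Tk^{-(m+l)}$.

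One small numerical slip, which does not affect the conclusion: you assert $\sum_m 1/n_m\ll 1/l$, but over the full admissible range $0\leq m\leq (q-1)l+\log_k l+O(1)$ the geometric sum $\sum_m k^{m-(q-1)l}$ has top term $\asymp l$, so $\sum_m 1/n_m\ll l^{-1}\cdot l\ll 1$, not $\ll 1/l$. Since $O(1)$ is all that is required, the final bound $S_{02}\ll 1$ is unaffected.
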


\begin{proof}
We discuss the case $c(1-U)<T\leq c(1+U)$. Let 
\[
J=\{(m,n)\in \Z_{\geq 0} \times \N \colon T/(1+U_{m,n})\leq c_{m,n} <T/(1-U_{m,n})\}.
\]
Take any $(m,n)\in J$. Then, the following inequalities hold: 
\begin{enumerate}
\item \label{f0}$c_{m,n}\asymp T$;
\item \label{f1}$m\leq (q-1)l +\log_k l+O(1)=\log_k (Tk^{-l})+O(1)$;
\item \label{f2}$n \asymp Tk^{-(l+m)}$;
\item \label{f3}$\alpha_{m,n} \asymp T$.
\end{enumerate}
Indeed, \eqref{f0} is trivial by $U\leq 1/2$ and  the choice of $(m,n)$. Also, \eqref{f1} immediately follows from  $k^{m+l}\ll c_{m,n}\asymp T$ and $T\asymp l k^{ql}$. In addition, \eqref{f0} and the definition of $c_{m,n}$ imply \eqref{f2} and \eqref{f3}.  

By the choice of $U_{m,n}$, \eqref{f1}, \eqref{f2}, and \eqref{f3}, we obtain 
\[
U_{m,n} \asymp \min( 1/2, T^{-1/2+\delta} l^{1+\delta} k^{-\delta(m+l)} ).
\]
 Further, by the definition of $J$, $n$ satisfies 
\begin{equation}\label{inequality:range-of-n}
    \frac{qT}{2k^{m+l+p/q}\pi (1+U) } \leq n < \frac{qT}{2k^{m+l+p/q}\pi (1-U) }.
\end{equation}
The number of $n$'s satisfying \eqref{inequality:range-of-n} is at most $\ll 1+ UTk^{-m-l}\ll 1+ T^{1/2+\delta} l^{1+\delta} k^{-(1+\delta)(m+l)}$. Similarly to \eqref{inequality:first-deriv}, we obtain 
\[
\int_{1}^{c(1-U)} e^{iF_{m,n}(t)} dt\ll U^{-1}. 
\]
Therefore, by  \eqref{f1}, \eqref{f2}, and \eqref{f3}, we have
\begin{align}\label{inequality:case-J}
    S_{02}&= \underset{(m,n)\in J }{\sum_{n=1}^\infty \sum_{m=0 }^\infty} n^{-1} a_m \alpha_{m,n}^{-1/2} q \cdot \Re \left(e^{-i\pi/4}\int_{1}^{T} e^{iF_{m,n}(t)} dt\right) \\ \nonumber
    &\ll 1+ \sum_{m \leq (q-1)l+\log_k l+O(1)}  T^{-3/2} k^{m+l} \left\lvert \sum_{\substack {n\in \N \text{ with }\eqref{inequality:range-of-n}  \\ (m,n)\in J} }  \int_{c(1-U)}^{T} e^{iF_{m,n}(t)} dt \right\rvert. 
\end{align}
For every $t\in [c(1-U),T]$, we observe that 
\[
|F''_{m,n}(t)|= |q/t|\gg T^{-1}.  
\]
Therefore, Lemma~\ref{lemma:second-derivative} with $F\coloneqq F_{m,n}$ yields that the most right-hand side of \eqref{inequality:case-J} is 
\begin{align*}
    &\ll 1+\sum_{m \leq (q-1)l+\log l+O(1)}  T^{-3/2} k^{m+l} (1+ T^{1/2+\delta} l^{1+\delta} k^{-(1+\delta)(m+l)}) T^{1/2}.
\end{align*}
By simple calculation and $T\asymp lk^{ql}$, the above is 
\begin{align*}
    &\ll 1+\sum_{m \leq (q-1)l+\log_k l+O(1)} (T^{-1}  k^{m+l}+ T^{-1/2+\delta }  l^{1+\delta} k^{-\delta(m+l)} ) \\
    &\ll 1+ T^{-1}T + T^{-1/2}l^{1+\delta} \ll 1.  
\end{align*}
\end{proof}

\begin{lemma}\label{lemma:S03}
We have
\[
\frac{S_{03}}{2\pi^{3/2}}  = \sum_{m\leq \log_k(Tk^{-l})+O(1)}a_m \sum_{\substack{n\in \mathbb{N}\\ c_{m,n}(1+U_{m,n})<T }} \frac{\sin (2\alpha_{m,n} )}{n\pi } +O(1 ).    
\]

\end{lemma}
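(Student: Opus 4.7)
The plan is to apply the stationary phase method (Lemma~\ref{lemma:stationary-phase}) to each inner integral $\int_{1}^{T}e^{iF_{m,n}(t)}\,dt$ with $(m,n)$ in the summation range defining $S_{03}$. Since $F'_{m,n}(t)=q\log(c_{m,n}/t)$, the unique stationary point is $t=c_{m,n}$, and the defining constraint $c_{m,n}(1+U_{m,n})\leq T$ places it comfortably inside $[1,T]$. I split the integral as $\int_{1}^{c(1-U)}+\int_{c(1-U)}^{c(1+U)}+\int_{c(1+U)}^{T}$, applying Lemma~\ref{lemma:exponential-sums} on the two outer pieces, where $|F'(t)|\geq q|\log(1\pm U)|\gg qU$ gives a bound $\ll U^{-1}$ each, and Lemma~\ref{lemma:stationary-phase} on the middle piece. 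On $[c(1-U),c(1+U)]$, $-F''(t)=q/t\asymp q/c$ and $|F'''(t)|=q/t^{2}\asymp q/c^{2}$, so I may take $\lambda_{2}\asymp q/c$ and $\lambda_{3}\asymp q/c^{2}$. Using $F(c_{m,n})=qc_{m,n}=2\alpha_{m,n}$ and $|F''(c_{m,n})|^{1/2}=\sqrt{q/c}$, the extracted main term is $\sqrt{2\pi c/q}\,e^{-i\pi/4+2i\alpha_{m,n}}$.

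Inserting this back into the defining expression for $S_{03}$, multiplying by the outer $e^{-i\pi/4}$ and taking the real part yields $\sqrt{2\pi c/q}\sin(2\alpha_{m,n})$. Scaling by the prefactor $qn^{-1}a_{m}\alpha_{m,n}^{-1/2}$ and using $\sqrt{2\pi c/q}=2\sqrt{\pi\alpha_{m,n}}/q$, this simplifies to $2\sqrt{\pi}\,a_{m}\sin(2\alpha_{m,n})/n$ per term, which after division by $2\pi^{3/2}$ matches the advertised main term $a_{m}\sin(2\alpha_{m,n})/(n\pi)$. The three error sources to be controlled are: (i) the first derivative test contribution $\ll U_{m,n}^{-1}$ from each outer piece; (ii) the stationary phase boundary terms $\min(|F'(c(1\pm U))|^{-1},\lambda_{2}^{-1/2})\ll\min((qU)^{-1},\sqrt{c/q})$, which an elementary check using $l\geq 2$ and $U\leq 1/2$ shows is always $\ll(qU)^{-1}$; and (iii) the interior stationary phase error $\lambda_{2}^{-4/5}\lambda_{3}^{1/5}\asymp c^{2/5}q^{-3/5}\asymp\alpha_{m,n}^{2/5}/q$. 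After multiplying by the prefactor $qn^{-1}a_{m}\alpha_{m,n}^{-1/2}$, sources (i) and (ii) each contribute $\ll n^{-1}\alpha_{m,n}^{-1/2}U_{m,n}^{-1}$ per term, which sum to $O(1)$ by \eqref{inequality:convergentU}, while source (iii) becomes $\ll n^{-1}\alpha_{m,n}^{-1/10}$; summing this over $(m,n)$ with $\alpha_{m,n}\ll T$ yields a convergent double series bounded by a geometric series in $k^{-l/10}$, hence $O(1)$.

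Finally, the constraint $c_{m,n}(1+U_{m,n})<T$ together with $n\geq 1$ forces $\pi k^{m+l+p/q}\leq c_{m,n}\leq T$, giving $m\leq\log_{k}(Tk^{-l})+O(1)$, the advertised range. The main obstacle is the interior stationary phase error $\lambda_{2}^{-4/5}\lambda_{3}^{1/5}$: unlike the first-derivative and boundary errors, which are tamed directly by \eqref{inequality:convergentU}, this term carries a growing factor $\alpha_{m,n}^{2/5}$, so the margin for summability is only the small exponent $-1/10$ remaining after combining with the natural weight $n^{-1}\alpha_{m,n}^{-1/2}$; that this still produces a geometric series in $k^{-l/10}$ is precisely what makes the whole argument go through.
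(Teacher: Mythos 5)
Your proposal follows essentially the same route as the paper: split $\int_1^T$ at $c(1\pm U)$, apply the first-derivative test (Lemma~\ref{lemma:exponential-sums}) on the outer pieces, apply the stationary phase lemma on $[c(1-U),c(1+U)]$ to extract $2\pi^{1/2}e^{-\pi i/4}e^{2i\alpha_{m,n}}\alpha_{m,n}^{1/2}/q$, and control the accumulated errors using \eqref{inequality:convergentU} together with the range restriction $k^{m+l}\ll c_{m,n}\ll T$. Your explicit verification that the interior stationary phase error $\lambda_2^{-4/5}\lambda_3^{1/5}\asymp\alpha_{m,n}^{2/5}$ contributes $\sum_{m,n}n^{-11/10}k^{-(m+l)/10}\ll 1$ after the prefactor is a welcome filling-in of a step the paper leaves implicit.
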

\begin{proof}
Take any $(m,n)\in\Z_{\geq0}\times \N$ with $T>c_{m,n}(1+U_{m,n})$. We have $F'(c_{m,n})=0$ and $c_{m,n}\in [1,T]$. Further, we also get 
\begin{equation}\label{equation:upper bound of m}
    m\leq (q-1)l +\log_k l+O(1)=\log_k (Tk^{-l})+O(1)
\end{equation}
since $k^{m+l}\ll c_{m,n} \ll lk^{ql}$. To apply Lemma~\ref{lemma:stationary-phase}, check \eqref{condition:stationary1} and  \eqref{condition:stationary2}. It follows that 
\[
F''(t) =  -q/t,\quad \text{and} \quad \quad F'''(t)= q/t^2. 
\]
Therefore, for every $t\in [c(1-U),c(1+U)]$, we obtain 
\[
|F''(t) |  \asymp c^{-1}, \quad \text{and} \quad |F'''(t)| \ll c^{-2}.
\]
In addition, $|F'(c(1-U) )|$, $|F'(c(1+U))|\gg U$. Thus, by $U\gg c^{-1/2}$, Lemma~\ref{lemma:stationary-phase} leads to 
\begin{align} \nonumber
    \int_{c(1-U)}^{c(1+U)} e^{iF(t)}  dt &=(2\pi)^{\frac{1}{2}} \frac{e^{-\pi i/4 +iF(c)} }{|F''(c)|^{1/2}}  +O(c^{\frac{4}{5}}c^{-\frac{2}{5}} )+O\left(\min\left(U^{-1},c^{\frac{1}{2}} \right)\right) \\ \label{equation:integral-main}
    &= 2\pi^{\frac{1}{2}} e^{-\pi i/4} e^{2i\alpha_{m,n} } \alpha_{m,n}^{1/2}/q + O(c^{\frac{2}{5}}) +O(U^{-1}).
\end{align}
By Lemma~\ref{lemma:exponential-sums}, we obtain 
\begin{equation}\label{Inequality-exponential-2}
    \left\lvert \int_{c(1+U)}^T e^{iF(t)} dt \right\rvert \ll U^{-1} . 
\end{equation}
Combining \eqref{inequality:convergentU}, \eqref{equation:integral-main} and \eqref{Inequality-exponential-2}, we have 
\begin{align*}
    \frac{S_{03}}{2\pi^{3/2}}  &=\frac{1}{2\pi^{3/2}} \underset{c_{m,n}(1+U_{m,n}) <T }{\sum_{n=1}^\infty \sum_{m=0 }^\infty} n^{-1} a_m \alpha_{m,n}^{-1/2} q \cdot \Re \left(e^{-i\pi/4}\int_{1}^{T} e^{iF_{m,n}(t)} dt\right) \\
    &= \frac{1}{2\pi^{3/2}} \underset{c_{m,n}(1+U_{m,n}) <T }{\sum_{n=1}^\infty \sum_{m=0 }^\infty} n^{-1} a_m \alpha_{m,n}^{-1/2} q \cdot  \Re \left(\frac{2\pi^{\frac{1}{2}}}{i} e^{2i\alpha_{m,n}} \alpha_{m,n}^{1/2}/q  \right)+O(1) \\
    &= \sum_{m\leq \log_k(Tk^{-l})+O(1)}a_m \sum_{\substack{n\in \mathbb{N}\\ c_{m,n}(1+U_{m,n})<T }} \frac{\sin (2\alpha_{m,n} )}{n\pi } +O(1) ,  
\end{align*}
where we apply \eqref{equation:upper bound of m} to restrict the range of the summation.
\end{proof}

\begin{proof}[Proof of Theorem~\ref{theorem:main2}] Let $k$ be an integer not less than $2$ and let $p=q=1$. Take an arbitrary real number $l'\geq 2$.  Let $l$ be the integer satisfying $l\leq  l'-\log_k l'  <l+1 $. Choose $T= \frac{2\pi }{\log k} k^{l'}$. Then it follows that $T \asymp k^{l'} = l' k^{l'-\log_k l'} \asymp l k^{l}$. By combining Proposition~\ref{proposition:perronRandS} with $\sigma_0=-1/2$ and Lemmas~\ref{lemma:evaluationS-step1} to \ref{lemma:S03}, we obtain 
\begin{align*}
    0&=\sum_{0\leq m\leq l}\{k^{m+1}\}=A(l)\\
    &= -\frac{1}{2\pi i } \sum_{0<|n|\leq \frac{\log k}{2\pi} T } \zeta \left(\frac{2n\pi i}{\log k}\right) \frac{1}{n} \\
    &\hspace{30pt}+ \sum_{m\leq \log_k(Tk^{-l}) +O(1)} \frac{a_m}{k-1} \sum_{\substack{n\in \mathbb{N}\\ c_{m,n}(1+U_{m,n})<T}} \frac{\sin (2\alpha_{m,n})}{n\pi} +O(1).   
\end{align*}
We have $\sin(2\alpha_{m,n})=0$ since $\alpha_{m,n}=\pi k^{l+m+1}n\in \pi \mathbb{Z}$ for all integers $m\geq 0$ and $n\geq 1$.  Therefore, we obtain
\[
0= -\frac{1}{2\pi i} \sum_{0<|n|\leq k^{l'}} \zeta \left(\frac{2n\pi i}{\log k}\right) \frac{1}{n} +O(1), 
\]
which completes the proof of Theorem~\ref{theorem:main2}.
\end{proof}

\begin{proof}[Proof of Theorem~\ref{theorem:shift-trick}]
Take arbitrary integers $k$, $p$, and $q$ satisfying $k\geq 2$, $1\leq p<q$, and $\gcd(p,q)=1$. 
By Lemmas~\ref{lemma:evaluationS-step1} to \ref{lemma:S03}, we obtain 
\[
(k-1)S_{-1/(2q)}(l,T) =  \sum_{m\leq \log_k(Tk^{-l}) +O(1)}a_m \sum_{\substack{n\in \mathbb{N}\\ c_{m,n}(1+U_{m,n})<T }} \frac{\sin (2\alpha_{m,n} )}{n\pi } +O(1). 
\]
Lemma~\ref{Lemma:saw-tooth-function} with $y=k^{m+l+p/q}$ implies that
\[
\sum_{\substack{n\in \mathbb{N}\\ c_{m,n}(1+U_{m,n})<T }} \frac{\sin (2\alpha_{m,n} )}{n\pi } = -\psi(k^{m+l+p/q} ) + E_{m}(l),
\]
where  $E_m(l)$ satisfies
\begin{gather*}
    |E_{m}(l)| \leq \min \left( \frac{1}{2}, \frac{1}{(2K_m+1)|\sin (\pi k^{m+l+p/q})| } \right),\\ 
    K_m\coloneqq \max\{n\in \N \colon c_{m,n}(1+U_{m,n})< T\}. 
\end{gather*}
Since $c_{m,n}=2k^{l+m+p/q}n\pi /q$, $T \asymp lk^{ql}$, and $U\leq 1/2$, we get $K_m \gg lk^{(q-1)l -m }$, leading to \eqref{inequality:Em(l)}. Therefore, $(k-1)S_{-1/(2q)}(l,T)$ is 
\begin{align*}
    &= \sum_{0\leq m \leq \log_k(Tk^{-l}) +O(1)} (k-1) \left(\frac{1}{2} - \{k^{m+l+p/q} \} +E_m(l)\right) + O(1)\\
    &= (k-1)\left(\frac{ \log_k(Tk^{-l})}{2} -\sum_{0\leq m\leq \log_k(Tk^{-l})} \{k^{m+l+p/q} \}+\sum_{0\leq m\leq \log_k(Tk^{-l})} E_m(l) \right)+O(1).
\end{align*}
By Proposition~\ref{proposition:perronRandS} with $\sigma_0=-1/(2q)$, we have
\begin{align*}
    A(l)
    &=\frac{l} {2}-\frac{1}{2\pi i }\sum_{0<|n|\leq \frac{q\log k}{2\pi} T} \zeta\left(\frac{2n\pi i}{\log k}\right) \frac{e^{2n\pi i p/q}}{n} + \frac{ \log_k(Tk^{-l})}{2}\\ 
    &\hspace{30pt} -\sum_{0\leq m\leq \log_k(Tk^{-l})} \{k^{m+l+p/q} \}+\sum_{0\leq m\leq \log_k(Tk^{-l})} E_m(l)+O(1),
\end{align*}
which completes the proof of Theorem~\ref{theorem:shift-trick} since 
\[
A(l)+ \sum_{0\leq m\leq \log_k (Tk^{-l})} \{k^{m+l+p/q}\} = A(\log_k T )+O(1).
\]
\end{proof}

\section{Ridout's theorem and the completion of the proof}\label{section:completion}

\begin{theorem}[Ridout's theorem]\label{theorem:ridout}
Let $\alpha$ be any algebraic number other than 0; let $P_1,\ldots, P_s$, $Q_1,\ldots, Q_t$ be distinct primes; and let $\mu$, $\nu$, and $c$ be real numbers satisfying
\[
0\leq \mu \leq 1,\quad 0\leq \nu \leq 1,\quad c>0 . 
\]
Let $a$ and $b$ be restricted to integers of the form
\[
a=a^{*}P_1^{\rho_1}\cdots P_s^{\rho_s},\quad b = b^{*}Q_1^{\sigma_1}\cdots Q_t^{\sigma_t},
\]
where $\rho_1,\ldots, \rho_s$, $\sigma_1,\ldots, \sigma_t$ are non-negative integers and $a^*$, $b^*$ are integers satisfying
\[
0<a^* \leq ca^{\mu},\quad 0<b^* \leq cb^{\nu}.
\]
Then if $\kappa>\mu+\nu$, the inequality $0 <| \alpha-a/b|< b^{-\kappa}$ has only a finite number of solutions in $a$ and $b$.
\end{theorem}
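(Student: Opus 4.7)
The plan is to follow the framework of Roth's theorem on rational approximations to algebraic numbers, upgraded to track non-archimedean data simultaneously with the archimedean side. Suppose for contradiction that infinitely many pairs $(a,b)$ of the prescribed form satisfy $0 < |\alpha - a/b| < b^{-\kappa}$ with $\kappa > \mu + \nu$. I would first extract a very lacunary subsequence of solutions $(a_1,b_1),\ldots,(a_m,b_m)$ with $b_1 \ll b_2 \ll \cdots \ll b_m$, where the integer $m$ and the required growth rate depend on how close $\kappa$ is to $\mu + \nu$.

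The core construction is an auxiliary polynomial $P(X_1,\ldots,X_m) \in \Z[X_1,\ldots,X_m]$, produced via Siegel's lemma, of controlled height and multi-degree $(r_1,\ldots,r_m)$, which vanishes to high weighted \emph{index} at the diagonal point $(\alpha,\ldots,\alpha)$. Evaluating $P$ and its low-index partial derivatives at $(a_1/b_1,\ldots,a_m/b_m)$ and Taylor-expanding around $(\alpha,\ldots,\alpha)$ yields an upper bound on the archimedean size that uses the approximation hypothesis $|\alpha - a_i/b_i| < b_i^{-\kappa}$. The Ridout-specific input is that, thanks to the restricted prime factorizations $a_i = a_i^* P_1^{\rho_{i,1}}\cdots P_s^{\rho_{i,s}}$ and $b_i = b_i^* Q_1^{\sigma_{i,1}}\cdots Q_t^{\sigma_{i,t}}$, one can also lower-bound the $P_j$-adic and $Q_j$-adic valuations of the cleared-denominator integer $b_1^{r_1}\cdots b_m^{r_m} P(a_1/b_1,\ldots,a_m/b_m)$; the parameters $\mu$ and $\nu$ exactly measure what fraction of these valuations is ``free'' (coming from $a_i^*$ and $b_i^*$, which are small) versus what must be controlled through the polynomial.

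Combining these local estimates through the product formula would force the associated rational quantity to be zero, which contradicts Roth's non-vanishing lemma on auxiliary polynomials at rational points near the diagonal, provided the multi-degrees $(r_1,\ldots,r_m)$, the prescribed index of vanishing at $(\alpha,\ldots,\alpha)$, and the lacunary ratios $b_{i+1}/b_i$ are tuned so that the index at the rational approximation point would have to exceed what the lemma permits. The main obstacle is Roth's lemma itself, which is the genuinely deep ingredient and requires a delicate induction on the number of variables; the $p$-adic bookkeeping needed to upgrade from $\kappa > 2$ (Roth) to $\kappa > \mu + \nu$ (Ridout) is technical but follows a clean pattern once the valuations on both $a_i$ and $b_i$ are accounted for uniformly. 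The ineffectivity noted in the paper is intrinsic: it enters precisely at the pigeonhole step that produces the lacunary subsequence, which gives no computable bound on the largest admissible solution.
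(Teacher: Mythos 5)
The paper does not prove Ridout's theorem at all: its ``proof'' is the single line ``See \cite{Ridout},'' invoking the 1957 result as a black box. There is therefore no argument in the paper to compare against, and your outline is a reconstruction from scratch. As a sketch of the standard proof it is essentially correct: Ridout's theorem is obtained by running the Thue--Siegel--Roth machinery (auxiliary polynomial from Siegel's lemma, weighted index at the diagonal, Roth's non-vanishing lemma) while simultaneously tracking the $P_j$-adic and $Q_j$-adic valuations forced by the restricted factorizations of $a$ and $b$; the parameters $\mu$ and $\nu$ measure how much of the archimedean size of $a$ and $b$ is not absorbed by the prescribed prime powers, which is exactly why the threshold $2$ in Roth drops to $\mu+\nu$. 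Your identification of Roth's lemma as the deep ingredient and of the pigeonhole/lacunary-subsequence step as the source of ineffectivity is also accurate. The only caveat is that this is a strategic outline rather than a proof: the $p$-adic bookkeeping needed to make the product-formula contradiction close precisely at $\kappa>\mu+\nu$ (rather than some weaker threshold) is where Ridout's actual paper spends most of its effort, and your sketch does not carry out that computation. Given the paper cites the result rather than proving it, that level of detail is not expected here, and I would record your proposal as a correct high-level account of the known proof.
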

\begin{proof}
See \cite{Ridout}.
\end{proof}

We have the following corollary by substituting $\mu=1$,  $\nu=0$, and $c=1$.
\begin{corollary}\label{Cororally:Ridout}Let $\alpha$ be any algebraic number other than 0. 
Let $Q_1,\ldots , Q_t$ be distinct primes. Let $b$ be an integer of the form 
\begin{equation}\label{equation:primefactors}
    b=Q_1^{\sigma_1}\cdots Q_{t}^{\sigma_t},
\end{equation}
where $\sigma_1,\ldots, \sigma_t$ are non-negative integers. Then for any $\epsilon>0$, there exists $C>0$ such that $|\alpha-a/b |\geq Cb^{-1-\epsilon}$
for every $a\in \mathbb{Z}$ and $b$ of the form  \eqref{equation:primefactors}. 
\end{corollary}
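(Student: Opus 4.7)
The plan is to deduce this corollary by a direct specialization of Theorem~\ref{theorem:ridout}. I take $\mu = 1$, $\nu = 0$, $c = 1$, impose no prime constraints on the numerator (so $s = 0$ and $a = a^*$), and use the given primes $Q_1, \ldots, Q_t$ for the denominator. The constraint $0 < b^* \leq c b^{\nu} = 1$ then forces $b^* = 1$, so the allowed denominators $b$ are exactly those of the form \eqref{equation:primefactors}, while the numerator $a$ ranges over all positive integers (the hypothesis $0 < a^* \leq c a^{\mu} = a$ puts no further restriction).

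Given $\epsilon > 0$, I set $\kappa = 1 + \epsilon$, which satisfies the strict inequality $\kappa > \mu + \nu = 1$ required by the theorem. Ridout's theorem then produces a finite set $\mathcal{E}$ of pairs $(a,b)$, with $a \in \mathbb{Z}_{>0}$ and $b$ of the admissible form, satisfying $0 < |\alpha - a/b| < b^{-1-\epsilon}$. Because $\mathcal{E}$ is finite and each of the quantities $|\alpha - a/b|\, b^{1+\epsilon}$ is strictly positive, the minimum
\[
C_0 = \min_{(a,b) \in \mathcal{E}} |\alpha - a/b|\, b^{1+\epsilon}
\]
is a positive real number, and choosing $C = \min(1, C_0)$ yields $|\alpha - a/b| \geq C b^{-1-\epsilon}$ for every positive $a$ and every admissible $b$.

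The only remaining bookkeeping is to dispose of the cases $a \leq 0$. For $a = 0$, the inequality reduces to $|\alpha| \geq C b^{-1-\epsilon}$, which holds for all admissible $b$ once $C$ is shrunk further (using $\alpha \neq 0$). For $a < 0$, I apply the argument above to the algebraic number $-\alpha$ and the positive integer $-a$, exploiting the identity $|\alpha - a/b| = |(-\alpha) - (-a)/b|$; again $C$ may need to be replaced by a smaller constant depending on $\alpha$, $\epsilon$, and the primes $Q_i$. There is no genuine obstacle here, since the whole statement is a single clean invocation of Ridout's theorem together with the observation that finitely many exceptional rationals can be absorbed into the constant $C$.
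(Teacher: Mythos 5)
Your proposal is correct and follows the same route as the paper, which simply says ``substitute $\mu=1$, $\nu=0$, $c=1$'' into Theorem~\ref{theorem:ridout}; you have merely spelled out the routine steps of absorbing the finite exceptional set into the constant $C$ and reducing the cases $a\leq 0$ to the positive case via $-\alpha$. (One remark applying equally to the paper's statement: the inequality can only hold when $\alpha$ is not itself expressible as $a/b$ with $b$ of the admissible form, which is the situation in all applications here since $\alpha=k^{p/q}$ is irrational.)
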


\begin{proof}[Proof of Theorem~\ref{theorem:generalization}]
Let $\gamma>0$ be an arbitrarily small constant. Let $l'$ be a sufficiently large real number. Take a positive integer $l$ such that $ql \leq l' -\log_k l'  < ql+q$. Choose $T=\frac{2\pi}{q\log k}k^{l'}$. Then we obtain
\[
T= \frac{2\pi}{q\log k}k^{l'} = \frac{2\pi}{q\log k}l' k^{l'-\log_k l' } \asymp lk^{ql}.    
\]
Since we have $A(l')= A(ql)+O(\log_k l')$, Theorem~\ref{theorem:shift-trick} leads to
\begin{align*}
    A(l')
    &=A(ql)+O(\log l')\\
    &= \frac{ql}{2} -\frac{1}{2\pi i }\sum_{0<|n|\leq k^{l'}} \zeta\left(\frac{2n\pi i}{\log k}\right) \frac{e^{2n\pi i p/q}}{n}+\sum_{0\leq m\leq (q-1)l} E_m(l)   +O(\log l'),
\end{align*}
where $E_{m}(l)$ satisfies \eqref{inequality:Em(l)}. Since $|\sin \pi x|\gg \|x\|$, for every $0\leq m\leq (q-1)l$, we have 
\[
E_{m}(l)\leq \min \left( \frac{1}{2}, \frac{C}{lk^{(q-1)l-m}\|k^{m+l}\cdot k^{p/q} \|}\right)
\]
for some constant $C>0$. By substituting $\alpha:=k^{p/q}$, $b:=k^{m+l}$, and  $\epsilon:=\gamma$ in Corollary~\ref{Cororally:Ridout}, we obtain 
\begin{equation}\label{inequality:Ridout}
    \|k^{m+l}\cdot k^{p/q}\|\gg_\gamma k^{-\gamma (m+l)} .
\end{equation}
Therefore, the inequality \eqref{inequality:Ridout} yields that 
\[
\sum_{0\leq m \leq \frac{(q-1-\gamma)l}{1+\gamma} } E_{m}(l) \ll_\gamma \sum_{0\leq m \leq \frac{(q-1-\gamma)l}{1+\gamma}} l^{-1} k^{(\gamma+1)m-(q-1-\gamma)l  } \ll_\gamma 1.
\]
Also, we obtain 
\[
\sum_{\frac{(q-1-\gamma)l}{1+\gamma}<m \leq (q-1)l} E_{m}(l)\ll \gamma ql, 
\]
where the implicit constant does not depend on $\gamma$. Therefore, we have
\[
\sum_{0\leq m\leq (q-1)l} E_{m}(l)=O_\gamma(1) +O(\gamma l ).
\]
By combining the above discussion, we obtain 
\[
A(l')= \frac{l'}{2} -\frac{1}{2\pi i }\sum_{0<|n|\leq k^{l'}} \zeta\left(\frac{2n\pi i}{\log k}\right) \frac{e^{2n\pi i p/q}}{n}+O(\gamma l') +O_\gamma(1) +O(\log l'),
\]
which implies that 
\[
\lim_{l'\to \infty} \frac{1}{l'}\left\lvert A(l') - \frac{l'}{2} +\frac{1}{2\pi i }\sum_{0<|n|\leq k^{l'}} \zeta\left(\frac{2n\pi i}{\log k}\right) \frac{e^{2n\pi i p/q}}{n} \right\rvert \ll \gamma.
\]
By choosing $\gamma\to 0$, we conclude Theorem~\ref{theorem:generalization}. 
\end{proof}

\begin{acknowledgment}
The first author was financially supported by JST SPRING, Grant Number JPMJSP2125. The second author was financially supported by JSPS KAKENHI Grant Number JP22J00025 and JP22KJ0375. 
\end{acknowledgment}

\end{document}